\date{}
\newcommand{\N}{\noindent}
\newcommand{\R}{{\mathbb{R}}}
\newcommand{\BEE}{\begin{equation*}}
\newcommand{\EEE}{\end{equation*}}
\newcommand{\BE}{\begin{equation}}
\newcommand{\EE}{\end{equation}}
\numberwithin{equation}{section}
\newtheorem{definition}{Definition}[section]
\newtheorem{theorem}{Theorem}[section]
\newtheorem{lemma}{Lemma}[section]
\newtheorem{example}{Example}[section]
\newtheorem{proposition}{Proposition}[section]
\newtheorem{remark}{Remark} [section]
\newtheorem{corollary}{Corollary}[section]
\def\RR{\hbox{I\kern-.2em\hbox{R}}}
\title{\bf\large Group sparse optimization via $\ell_{p,q}$ regularization}
\author{Yaohua Hu\thanks{College of Mathematics and Statistics, Shenzhen University, Shenzhen 518060, P. R. China (hyh19840428@163.com).},
\quad Chong Li\thanks{Department of Mathematics, Zhejiang University, Hangzhou 310027, P. R. China (cli@zju.edu.cn).},
\quad Kaiwen Meng\thanks{School of Economics and Management, Southwest Jiaotong University, Chengdu 610031, P. R. China (mkwfly@126.com).},
\quad Jing Qin\thanks{School of Life Sciences, The Chinese University of Hong Kong, Shatin, New Territories, Hong Kong (qinjing@cuhk.edu.hk).},
\quad Xiaoqi Yang\thanks{Department of Applied Mathematics, The Hong Kong Polytechnic University, Kowloon, Hong Kong (mayangxq@polyu.edu.hk).}
}
\begin{document}

\maketitle

\noindent {\bf Abstract}\quad
In this paper, we investigate a group sparse optimization problem via $\ell_{p,q}$ regularization in three aspects: theory, algorithm and application. In the theoretical aspect, by introducing a notion of group restricted eigenvalue condition, we establish some oracle property and a global recovery bound of order $O(\lambda^\frac{2}{2-q})$ for any point in a level set of the $\ell_{p,q}$ regularization problem, and by  virtue of modern variational analysis techniques, we also provide a local analysis of recovery bound of order $O(\lambda^2)$ for a path of local minima.
In the algorithmic aspect, we apply the well-known proximal gradient method to solve the $\ell_{p,q}$ regularization problems, either by analytically solving some specific $\ell_{p,q}$ regularization subproblems,  or by using the Newton method to solve general $\ell_{p,q}$ regularization subproblems.
In particular, we establish the linear convergence rate of the proximal gradient method for solving the $\ell_{1,q}$ regularization problem under some mild conditions. As a consequence, the linear convergence rate of proximal gradient method for solving the usual $\ell_{q}$ regularization problem ($0<q<1$) is obtained. Finally in the aspect of application, we present some numerical results on both the simulated data and the real data in gene transcriptional regulation.\\


\noindent {\bf Key words}\quad Group sparse optimization, $\ell_{p,q}$ regularization, nonconvex optimization, restricted eigenvalue condition, proximal gradient method, iterative thresholding algorithm, gene regulation network.

\section{Introduction}
In recent years, a great amount of attention has been paid to the sparse optimization problem, which is to find the sparse solutions of
an underdetermined linear system. The sparse optimization problem arises in a wide range of fields, such as variable selection, pattern analysis, graphical modeling and compressive sensing; see \cite{Blumensath08,Candes06b,ChenSS01,Donoho06,Fan01,Tibshirani94} and references therein.

In many applications, the underlying data usually can be represented approximately by a linear system of the form
\[
Ax=b + \varepsilon,
\]
where $A\in \R^{m\times n}$ and $b\in \R^m$ are known, $\varepsilon\in \R^m$ is an unknown noise vector,
and $x=(x_1,x_2,\dots,x_n)^\top \in \R^n$ is the variable to be estimated. If $m\ll n$, the above linear system is seriously ill-conditioned and may have infinitely many solutions.
The sparse optimization problem is to recover $x$ from information $b$ such that $x$ is of a sparse structure. The sparsity of variable $x$ has been measured by the $\ell_p$ norm $\|x\|_p$ ($p=0$, see \cite{Blumensath08,BrediesLorenz08}; $p=1$, see \cite{BeckTeboulle09,ChenSS01,Daubechies04,Donoho06,Tibshirani94,StWright2009,YZ11}; and $p=1/2$, see \cite{Chartrand08,XuZB12}).
The $\ell_p$ norm $\|x\|_p$ for $p>0$ is defined as
\[
\|x\|_p:=\left(\sum_{i=1}^n |x_i|^p\right)^{1/p},
\]
while the $\ell_0$ norm $\|x\|_0$ is defined as the number of nonzero components of $x$.
The sparse optimization problem can be modeled as
\begin{equation*}
  \begin{array}{ll}
     \text{min}&\|Ax-b\|_2\\
     \text{s.t.}  &\|x\|_0\le s,
  \end{array}
\end{equation*}
where $s$ is the given sparsity level.

For the sparse optimization problem, a popular and practical technique is the regularization method, which is to transform the sparse optimization problem into
an unconstrained optimization problem, called the regularization problem. For example, the $\ell_0$ regularization problem is
\begin{equation*}\label{eq-L0}
\min_{x\in \R^n} \|Ax-b\|_2^2+\lambda \|x\|_0,
\end{equation*}
where $\lambda>0$ is the regularization parameter, providing a tradeoff between accuracy and sparsity.
However, the $\ell_0$ regularization problem is nonconvex and non-Lipschitz, and thus it is generally intractable to solve it directly
(indeed, it is NP-hard; see \cite{Natarajan95}).

To overcome this difficulty, two typical relaxations of the $\ell_0$ regularization problem are introduced, which are the $\ell_1$ regularization problem
\begin{equation}\label{eq-L1}
\min_{x\in \R^n} \|Ax-b\|_2^2+\lambda \|x\|_1
\end{equation}
and the $\ell_{1/2}$ regularization problem
\begin{equation}\label{eq-L1/2}
\min_{x\in \R^n} \|Ax-b\|_2^2+\lambda \|x\|_{1/2}^{1/2}.
\end{equation}

\subsection{$\ell_p$ regularization problems}
The $\ell_1$ regularization problem, also called Lasso \cite{Tibshirani94} or Basis Pursuit \cite{ChenSS01}, has attracted much attention and has been accepted as one of the most useful tools for the sparse optimization problem.
Since the $\ell_1$ regularization problem is a convex optimization problem, many exclusive and efficient algorithms have been proposed and developed for solving \eqref{eq-L1}, for instance, the interior-point methods \cite{Candes06b,ChenSS01}, LARs \cite{Efron04}, the gradient projection method \cite{Figueiredo07} and the alternating direction method \cite{YZ11}.
However, in many practical applications, the solutions obtained from the $\ell_1$ regularization problem are much less sparse than those of the $\ell_0$ regularization problem, and it often leads to sub-optimal sparsity in reality; see, e.g., \cite{Chartrand08,XuZB12,Zhang10}.

Recently, the $\ell_{1/2}$ regularization problem is proposed to improve the performance of sparsity recovery of the $\ell_1$ regularization problem. Extensive computational studies in \cite{Chartrand08,XuZB12} revealed that the $\ell_{1/2}$ regularization problem admits a significantly stronger sparsity promoting property than the $\ell_1$ regularization problem in the sense that it guarantees to achieve the sparse solution from a smaller amount of samples. However, the $\ell_{1/2}$ regularization problem is nonconvex, nonsmooth and non-Lipschitz, and thus it is very difficult in general to design efficient algorithms for solving it. It was presented in \cite{Ge10} that finding the global minimal value of the $\ell_{1/2}$ regularization problem \eqref{eq-L1/2} is strongly NP-hard, while fortunately, computing a local minimum could be done in polynomial time. Some fast and efficient algorithms have been proposed to find a local minimum of \eqref{eq-L1/2}, such as the hybrid OMP-SG algorithm \cite{ChenXJ10}
and the interior-point potential reduction algorithm \cite{Ge10}.

The $\ell_{1/2}$ regularization problem \eqref{eq-L1/2} is a variant of lower-order penalty problems, investigated in \cite{HuangYang03,LuoPangRalph96,YangHuang01SIAM}, for a constrained optimization problem. The main advantage of the lower-order penalty functions over the classical $\ell_1$ penalty functions is that they require weaker conditions to guarantee an exact penalization property and that their least exact penalty parameter is smaller; see \cite{HuangYang03}.
It was reported in \cite{YangHuang01SIAM} that the first- and second-order necessary optimality conditions of lower order penalty problems converge to that of the original constrained optimization problem under a linearly independent constraint qualification.

Besides the preceding numerical algorithms,
one of the most widely studied methods for solving the sparse optimization problem is the class of the iterative thresholding algorithms, which is studied in a uniform framework of proximal gradient methods; see \cite{BeckTeboulle09,Blumensath08,BrediesLorenz08,Combettes05,Daubechies04,Nesterov13,XuZB12} and references therein.
It is convergent and of very low computational complexity. Benefitting from its simple formulation and low storage requirement, it is very efficient and applicable for large-scale sparse optimization problem.
In particular, the iterative hard (resp. soft, half) thresholding algorithm for the $\ell_0$ (resp. $\ell_1$, $\ell_{1/2}$) regularization problem  was studied in \cite{Blumensath08,BrediesLorenz08} (resp. \cite{BeckTeboulle09,Daubechies04}, \cite{XuZB12}).

\subsection{Global recovery bound}


To estimate how far is the solution of regularization problems from that of the linear system, the global recovery bound or $\ell_2$ consistency of the $\ell_1$ regularization problem have been investigated in the literature \cite{Bickel09,Bunea07,Meinshausen09,Geer09,TZhang09}. More specifically, under some mild conditions on $A$, such as the restricted isometry property (RIP) \cite{CandesTao05} or restricted eigenvalue condition (REC) \cite{Bickel09}, van de Geer and B\"{u}hlmann \cite{Geer09} established a deterministic recovery bound for the (convex) $\ell_1$ regularization problem:
\begin{equation}\label{eq-RB-L1}
\|x^*(\ell_1)-\bar{x}\|_2^2=O\left(\lambda^2s\right),
\end{equation}
where $x^*(\ell_1)$ is the solution of \eqref{eq-L1}, $\bar{x}$ is the solution of linear system $Ax=b$, and $s:=\|\bar{x}\|_0$ is the sparsity of $\bar{x}$.
In the statistic literature,  \cite{Bickel09,Bunea07,Meinshausen09,TZhang09} provided the recovery bound in a high probability for the $\ell_1$ regularization problem when the size of the variable tends to infinity, under REC/RIP or some relevant conditions.
However, to the best of our knowledge, the recovery bound for the general (nonconvex) $\ell_p$ regularization problem is still undiscovered. We will establish such a deterministic property in Section 2.

\subsection{Group sparse optimization}
In applications, a wide class of problems usually has certain special structures,
and recently, enhancing the recoverability due to the special structures has become an active topic in the sparse optimization.
One of the most popular structures is the group sparsity structure, that is, the solution has a natural grouping of its components,
and the components within each group are likely to be either all zeros or all nonzeros. In general, the grouping information can be any arbitrary partition of $x$, and it is usually pre-defined based on prior knowledge of specific problems. Let $x:=(x_{\mathcal{G}_1}^\top,\cdots,x_{\mathcal{G}_r}^\top)^\top$ represent the group structure of $x$. The group sparsity of $x$ with such a group structure can be measured by an $\ell_{p,q}$ norm defined by
\[
\|x\|_{p,q}:=\left(\sum_{i=1}^r \|x_{\mathcal{G}_i}\|_p^q\right)^{1/q}.
\]

Exploiting the group sparsity structure can reduce the degrees of freedom in the solution, thereby leading to better recovery performance.
Benefitting from these advantages, the group sparse optimization model has been applied in birthweight prediction \cite{Bach08,Yuan04}, dynamic MRI \cite{Usman11} and gene finding \cite{Meier08,HYang10} with the $\ell_{2,1}$ norm.
More specifically, the following $\ell_{2,1}$ regularization problem
\begin{equation*}\label{eq-L2,1}
\min_{x\in \R^n} \|Ax-b\|_2^2+\lambda \|x\|_{2,1}
\end{equation*}
was introduced by Yuan and Lin \cite{Yuan04} to study the grouped variable selection in statistics under the name of group Lasso.
The $\ell_{2,1}$ regularization, an important extension of the $\ell_1$ regularization, proposes an $\ell_2$ regularization for each group
and ultimately yields the sparsity in the group manner.
Since the $\ell_{2,1}$ regularization problem is a convex optimization problem, some effective algorithms have been proposed, such as, the spectral projected gradient method \cite{Berg2008}, SpaRSA \cite{StWright2009} and the alternating direction method \cite{YZhang11}.

\subsection{The aim of the paper}
In this paper, we will investigate the group sparse optimization via $\ell_{p,q}$ ($p\ge 1,0\le q\le 1$) regularization,
also called the $\ell_{p,q}$ regularization problem
\begin{equation}\label{eq-GSOP}
\min_{x\in \R^n} F(x):=\|Ax-b\|_2^2+\lambda \|x\|_{p,q}^q.
\end{equation}
We will investigate the oracle property and recovery bound for the $\ell_{p,q}$ regularization problem, which extend the existing results in two ways: one is the lower-order regularization problem, including the $\ell_{q}$ $(q<1)$ regularization problem; the other is the group sparse optimization problem, including the $\ell_{2,1}$ regularization problem (group Lasso) as a special case.
To this end, we introduce the weaker notions of REC: the lower-order REC and the group REC (GREC).
We will further establish the relationships between the new notions with the classical one: the lower-order REC is weaker than the classical REC, but the reverse is not true (see Example \ref{ex-REC}); and the GREC is weaker than the REC.
Under the GREC, we will provide the oracle property and the global recovery bound for the $\ell_{p,q}$ regularization problem (see Theorem \ref{thm-RecoverBound}).
Furthermore, we will conduct a local analysis of recovery bound for the $\ell_{p,q}$ regularization problem by virtue of modern variational analysis techniques \cite{Roc98}.
More precisely, we assume that the columns of $A$ corresponding to the active components of $\bar{x}$ (a solution of $Ax=b$) are linearly independent. This leads us to the application of implicit function theorem and thus guarantees the existence of a local path around $\bar {x}$ which satisfies a second-order growth condition. As such, in the local recovery bound, we establish a uniform quadratic recovery bound for all $\ell_{p,q}$ regularization problem, see Theorem 2.2.

The proximal gradient method is one of the most popular and practical methods for the sparse optimization problems, either convex or nonconvex problems.
We will apply the proximal gradient method to solve the $\ell_{p,q}$ regularization problem \eqref{eq-GSOP}.
The advantage of the use of the proximal gradient method is that for some specific regularization problems, the proximal subproblems have the analytical solutions, and the resulting algorithm is thus practically attractive.
In the general cases when the analytical solutions of the proximal optimization subproblems seem not available, we will employ the Newton method to solve the proximal optimization subproblems.
Furthermore, we will investigate the linear convergence rate of proximal gradient method for solving the $\ell_{p,q}$ regularization problem when $p=1$ and $0<q<1$ under the assumption that any nonzero group of a local minimum is active. The problem \eqref{eq-GSOP} of the case $p=1$ and $0<q<1$ possesses  the properties that the regularization term $\|x\|_{p,q}^q$ is concave and the objective function $F(x)$ of \eqref{eq-GSOP} satisfies a second-order growth condition, which play an important role in the establishment of the linear convergence rate.
To the best of our knowledge, this is the first attempt to study the linear convergence rate of proximal gradient method for solving nonconvex optimization problems. As a consequence of this result, we will obtain the linear convergence rate of proximal gradient method for solving $\ell_q$ regularization problem ($0<q<1$), which includes the iterative half thresholding algorithm ($q=1/2$) proposed in \cite{XuZB12} as a special case.
The result on linear convergence rate of proximal gradient method for solving $\ell_q$ regularization problem is still new, as far as we know.

In the aspect of application, we will conduct some numerical experiments on both simulated data and real data in gene transcriptional regulation to demonstrate the performance of the proposed proximal gradient method. From the numerical results, it is demonstrated that the $\ell_{p,1/2}$ regularization is the best one among the $\ell_{p,q}$ regularizations for $q\in[0,1]$, and it outperforms the $\ell_{p,1}$ and $\ell_{p,0}$ regularizations on both accuracy and robustness. This observation is consistent with several previous numerical studies \cite{Chartrand08,XuZB12} on the $\ell_{p}$ regularization problem.

\subsection{Main contributions}
The main objectives of this paper are to establish the oracle property and recovery bound, to design an efficient numerical method for the $\ell_{p,q}$ regularization problem \eqref{eq-GSOP}, and to apply the proposed method to the gene transcriptional regulation. The main contributions are presented as follows.
\begin{enumerate}[(i)]
  \item We establish the following global recovery bound for the $\ell_{p,q}$ regularization problem \eqref{eq-GSOP} under the $(p,q)$-GREC:
    \begin{equation}\label{eq-MC1}
    \|x^*-\bar{x}\|_2^2\le\left\{\begin{matrix}
       O\left(\lambda^\frac{2}{2-q}S\right),&{2^{K-1}q=1,}\\
       O\left(\lambda^\frac{2}{2-q}S^\frac{3-q}{2-q}\right), &{2^{K-1}q>1,}
    \end{matrix}\right.
    \end{equation}
    where $\bar{x}$ is a true solution of $Ax=b$, $S:=\|\bar{x}\|_{p,0}$ is the group sparsity of $\bar{x}$, $0<q\le 1\le p\le 2$, $x^*$ is any point in the level set ${\rm lev}_F(\bar{x})$ of \eqref{eq-GSOP}, and $K$ is the smallest integer such that $2^{K-1}q\ge 1$.
  \item 
  By virtue of the variational analysis technique, for all the $\ell_{p,q}$ regularization problems, we establish a uniform local recovery bound
    \[
    \|x^*_{p,q}(\lambda)-\bar{x}\|_2^2\le O\left(\lambda^2S\right)\quad {\rm for~small}~ \lambda,
    \]
    where $0 < q < 1 \leq p$ and $x^*_{p,q}(\lambda)$ is a local optimal solution of \eqref{eq-GSOP} (near $\bar{x}$).
  \item We present the analytical formulae for the proximal optimization subproblems of some specific $\ell_{p,q}$ regularizations,
        when $p=1,2$ and $q=0,1/2,2/3,1$.
      Moreover, we prove that any sequence $\{x^k\}$, generated by proximal gradient method for solving the $\ell_{1,q}$ regularization problem, linearly converges to a local minimum $x^*$ under some mild conditions, i.e., there exist $N\in \mathbb{N}$, $C>0$ and $\eta\in (0,1)$ such that
\[
F(x^k)-F(x^*)\le C\eta^k\quad {\rm and} \quad \|x^k-x^*\|_2\le C\eta^k,\quad \mbox{for any } k\ge N.
\]

  \item
      Our numerical experiments show that, measured by the biological golden standards, the accuracy of the gene regulation networks forecasting
      can be improved by exploiting the group structure of TF complexes.
      The successful application of group sparse optimization to gene transcriptional regulation will facilitate biologists to study the gene regulation of higher model organisms in a genome-wide scale.
\end{enumerate}

\subsection{The organization of the paper}
This paper is organized as follows.
In section 2, we introduce the concepts of $q$-REC and GREC, and establish the oracle property and (global and local) recovery bounds for the $\ell_{p,q}$ regularization problem.
In section 3, we apply the proximal gradient method to solve the group sparse optimization using different types of $\ell_{p,q}$ regularization,
and investigate the linear convergence rate of the resulting proximal gradient method.
Finally, section 4 exhibits the numerical results on both simulated data and real data in gene transcriptional regulation.

\section{Global and local recovery bounds}
This section is devoted to the study of the oracle property and (global and local) recovery bounds for the $\ell_{p,q}$ regularization problem \eqref{eq-GSOP}. To this end, we first present some basic inequalities of $\ell_p$ norm and introduce the notions of RECs, as well as their relationships.

The notation adopted in this paper is described as follows.
We let the lowercase letters $x,y,z$ denote the vectors, capital letters $N,S$ denote the numbers of groups in the index sets,
caligraphic letters $\mathcal{I}$, $\mathcal{T}$, $\mathcal{S}$, $\mathcal{J}$, $\mathcal{N}$ denote the index sets.
In particular, we use $\mathcal{G}_i$ to denote the index set corresponding to the $i$-th group and
$\mathcal{G}_\mathcal{S}$ to denote the index set $\{\mathcal{G}_i:i\in\mathcal{S}\}$.
For $x\in \R^n$ and $\mathcal{T}\subseteq \{1,\dots,n\}$, we use $x_{\mathcal{T}}$ to denote the subvector of $x$ corresponding to $\mathcal{T}$.

Throughout this paper, we assume that the group sparse optimization problem is of the group structure described as follows.
Let $x:=(x_{\mathcal{G}_1}^\top,\cdots,x_{\mathcal{G}_r}^\top)^\top$ represent the group structure of $x$, where $\{x_{\mathcal{G}_i}\in \R^{n_i}: i=1,\cdots,r\}$ is the grouping of $x$, $\sum_{i=1}^r n_i=n$ and $n_{\max}:=\max\left\{n_i:i\in \{1,\dots,r\}\right\}$.
For a group $x_{\mathcal{G}_i}$, we use $x_{\mathcal{G}_i}=0$ (reps. $x_{\mathcal{G}_i}\neq 0$, $x_{\mathcal{G}_i}\neq_{\mathbf{a}} 0$) to denote a zero (reps. nonzero, active) group, where $x_{\mathcal{G}_i}=0$ means that $x_j=0$ for all $j\in \mathcal{G}_i$; $x_{\mathcal{G}_i}\neq 0$ means that $x_j\neq 0$ for some $j\in \mathcal{G}_i$; and $x_{\mathcal{G}_i}\neq_\mathbf{a} 0$ means that $x_j\neq 0$ for all $j\in \mathcal{G}_i$. It is trivial to see that
\[
x_{\mathcal{G}_i}\neq_\mathbf{a} 0 \quad \Rightarrow \quad  x_{\mathcal{G}_i}\neq 0.
\]

For this group structure and $p>0$, the $\ell_{p,q}$ norm of $x$ is defined by
\begin{equation}\label{eq-Lpq norm}
\|x\|_{p,q}=\left\{\begin{matrix}
   \left(\sum_{i=1}^r \|x_{\mathcal{G}_i}\|_p^q\right)^{1/q},&{q> 0,}\\
   \sum_{i=1}^r \|x_{\mathcal{G}_i}\|_p^0, &q=0,
\end{matrix}\right.
\end{equation}
which proposes the $\ell_p$ norm for each group and then processes the $\ell_q$ norm for the resulting vector.
When $p=q$, the $\ell_{p,q}$ norm coincides with the $\ell_p$ norm, i.e., $\|x\|_{p,p}=\|x\|_p$.
Furthermore, all $\ell_{p,0}$ norms share the same formula, i.e., $\|x\|_{p,0}=\|x\|_{2,0}$, for all $p>0$.
In particular, when the grouping structure is degenerated to the individual feature level, i.e., $n_{\max}=1$ or $n=r$,
we have $\|x\|_{p,q}=\|x\|_q$  for all $p>0$ and $q>0$.

Moreover, we assume that $A$ and $b$ in \eqref{eq-GSOP} are related by a linear model (noiseless)
\[
b=A\bar{x}.
\]
Let $\mathcal{S}:=\left\{i\in \{1,\dots,r\}:\bar{x}_{\mathcal{G}_i}\neq 0\right\}$ be the index set of nonzero groups of $\bar{x}$, $\mathcal{S}^c:=\{1,\dots,r\}\setminus \mathcal{S}$ be the complement of $\mathcal{S}$, $S:=|\mathcal{S}|$ be the group sparsity of $\bar{x}$,
and $n_{\mathbf{a}}:=\sum_{i\in \mathcal{S}} n_i$.

\subsection{Inequalities of $\ell_{p,q}$ norm}
We begin with some basic inequalities of $\ell_p$ and $\ell_{p,q}$ norms, which will be useful in the later discussion of RECs and recovery bounds.
First, we recall the following well-known inequality
\begin{equation}\label{eq-Jensen}
\left(\sum_{i=1}^n |x_i|^{\gamma_2}\right)^{1/{\gamma_2}}\le \left(\sum_{i=1}^n |x_i|^{\gamma_1}\right)^{1/{\gamma_1}}\quad {\rm if}~0<{\gamma_1}\le {\gamma_2},
\end{equation}
or equivalently ($x=(x_1,x_2,\dots,x_n)^\top$),
\begin{equation*}\label{eq-Jensen-norm}
\|x\|_{\gamma_2}\le \|x\|_{\gamma_1}\quad {\rm if}~0<{\gamma_1}\le {\gamma_2}.
\end{equation*}


The following lemma improves \cite[Lemma 4.1]{HuangYang03} and extends to the $\ell_{p,q}$ norm.
It will be useful in providing a shaper global recovery bound (see Theorem \ref{thm-RecoverBound} below).
\begin{lemma}\label{lem-IneQ-2}
Let $0<q\le p\le  2$, $x \in \R^n$ and $K$ be the smallest integer such that $2^{K-1}q\ge 1$.
Then the following relations hold.
\begin{enumerate}[{\rm (i)}]
  \item $\|x\|_q^q\le n^{1-2^{-K}} \|x\|_2^q.$
  \item $\|x\|_{p,q}^q\le r^{1-2^{-K}} \|x\|_{p,2}^q.$
\end{enumerate}
\end{lemma}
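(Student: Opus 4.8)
The plan is to establish (i) first by a dyadic (repeated Cauchy–Schwarz / power-mean) argument, and then deduce (ii) from (i) by applying (i) to the vector of group norms $(\|x_{\mathcal{G}_1}\|_p,\dots,\|x_{\mathcal{G}_r}\|_p)\in\R^r$ together with the norm inequality \eqref{eq-Jensen} inside each group. The key observation for (i) is that the power-mean inequality \eqref{eq-Jensen} gives, for any $0<\gamma\le 1$ and any $y\in\R^n$, the sharper-looking but still elementary bound obtained by writing $\|y\|_\gamma^\gamma=\sum_i|y_i|^\gamma$ and comparing with $\|y\|_{2\gamma}^\gamma$. Concretely, applying Cauchy–Schwarz to $\sum_i |y_i|^\gamma\cdot 1$ yields $\|y\|_\gamma^\gamma\le n^{1/2}\,(\sum_i|y_i|^{2\gamma})^{1/2}=n^{1/2}\|y\|_{2\gamma}^{\gamma}$. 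Iterating this doubling step $K$ times, starting from exponent $q$ and ending once the exponent $2^{K}q$ has reached (or exceeded) $2$, the accumulated constant is $n^{1/2+1/4+\cdots+1/2^{K}}=n^{1-2^{-K}}$, and at the last stage one uses $\|y\|_{2^{K}q}\le\|y\|_2$ (valid since $2^{K}q\ge 2$, because $2^{K-1}q\ge1$) to arrive at $\|x\|_q^q\le n^{1-2^{-K}}\|x\|_2^q$. This is exactly (i).

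More carefully, let me set $\gamma_0:=q$ and $\gamma_{j}:=2\gamma_{j-1}=2^{j}q$. By the choice of $K$ we have $2^{K-1}q\ge1$, hence $\gamma_{K}=2^{K}q\ge 2$. For each $j=0,1,\dots,K-1$, Cauchy–Schwarz on the $n$ terms $|x_i|^{\gamma_j}$ gives
\[
\|x\|_{\gamma_j}^{\gamma_j}=\sum_{i=1}^n |x_i|^{\gamma_j}\le n^{1/2}\Bigl(\sum_{i=1}^n |x_i|^{2\gamma_j}\Bigr)^{1/2}=n^{1/2}\,\|x\|_{\gamma_{j+1}}^{\gamma_j}.
\]
Raising the inequality $\|x\|_{\gamma_j}^{\gamma_j}\le n^{1/2}\|x\|_{\gamma_{j+1}}^{\gamma_j}$ to successive powers and chaining (so that the $j$-th constant $n^{1/2}$ contributes with exponent $2^{-j-1}$ to the final bound on $\|x\|_q^q$), one obtains
\[
\|x\|_q^q\le n^{\sum_{j=1}^{K} 2^{-j}}\,\|x\|_{\gamma_K}^{q}=n^{1-2^{-K}}\,\|x\|_{\gamma_K}^{q}\le n^{1-2^{-K}}\,\|x\|_2^{q},
\]
where the last step uses $\gamma_K\ge 2$ and the monotonicity \eqref{eq-Jensen}. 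This proves (i).

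For (ii), put $z:=(\|x_{\mathcal{G}_1}\|_p,\dots,\|x_{\mathcal{G}_r}\|_p)^\top\in\R^r$; then by definition $\|x\|_{p,q}^q=\|z\|_q^q$ and $\|x\|_{p,2}^q=\|z\|_2^q$. Since $0<q\le 1$ (indeed $q\le p\le 2$ ensures $K$ is well defined and the same $K$ works for the $r$-dimensional vector), part (i) applied to $z\in\R^r$ gives $\|z\|_q^q\le r^{1-2^{-K}}\|z\|_2^q$, which is precisely (ii). Note the hypothesis $q\le p\le 2$ is used only to keep $z$ meaningful and to place $q\le 2$ so that \eqref{eq-Jensen} applies in the doubling steps; no further property of $p$ enters. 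The main (very minor) obstacle is purely bookkeeping: verifying that the exponents of the constant $n^{1/2}$ telescope to $1-2^{-K}$ rather than to something else, and checking the boundary case $2^{K-1}q=1$ (then $\gamma_{K}=2$ exactly and the final monotonicity step is an equality). Everything else is a direct application of Cauchy–Schwarz and the standard norm-monotonicity inequality already recorded in \eqref{eq-Jensen}.
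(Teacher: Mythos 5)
Your proposal is correct and follows essentially the same route as the paper: the iterated Cauchy--Schwarz doubling step (the paper phrases it as repeatedly using $\|y\|_1\le\sqrt{n}\|y\|_2$) with the telescoping constant $n^{1/2+\cdots+1/2^{K}}=n^{1-2^{-K}}$, the final passage from exponent $2^Kq\ge 2$ to the $\ell_2$ norm via \eqref{eq-Jensen}, and part (ii) obtained by applying (i) to the vector of group norms $(\|x_{\mathcal{G}_1}\|_p,\dots,\|x_{\mathcal{G}_r}\|_p)\in\R^r$. The only cosmetic remark is that your aside ``since $0<q\le 1$'' is not a hypothesis of the lemma and is not needed anywhere in the argument.
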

\begin{proof}
\begin{enumerate}[{\rm (i)}]
  \item Repeatedly using the property that $\|x\|_1\le \sqrt{n}\|x\|_2$, one has that
\begin{equation*}
\begin{array}{llll}
\|x\|_q^q&\le \sqrt{n} \left(\sum_{i=1}^n|x_i|^{2q}\right)^{1/2}\\
&\le \dots\\
&\le n^{\frac{1}{2}+\dots+\frac{1}{2^K}}\left(\sum_{i=1}^n|x_i|^{2^K q}\right)^{2^{-K}}
\end{array}
\end{equation*}
Since $2^{K-1}q\ge 1$, by \eqref{eq-Jensen}, we have
\[
\left(\sum_{i=1}^n|x_i|^{2^K q}\right)^{2^{-K}}=\left(\sum_{i=1}^n(|x_i|^2)^{2^{K-1} q}\right)^{\frac{1}{2^{K-1}q}\frac{q}{2}}
\le \left(\sum_{i=1}^n|x_i|^2\right)^{q/2}=\|x\|_2^q.
\]
Therefore, we arrive at the conclusion that
\[\|x\|_q^q\le n^{1-2^{-K}}\|x\|_2^q.\]
  \item By \eqref{eq-Lpq norm}, it is a direct consequence of (i).
\end{enumerate}
\end{proof}

For example, if $q=1$, then $K=1$; if $q=\frac12$ or $\frac23$, then $K=2$.
The following lemma describes the triangle inequality of $\|\cdot\|_{p,q}^q$.
\begin{lemma}\label{lem-IneQ-1-a}
Let $0<q\le 1\le p$ and $x,y \in \R^n$.
Then
\[
\|x\|_{p,q}^q-\|y\|_{p,q}^q\le \|x-y\|_{p,q}^q.
\]
\end{lemma}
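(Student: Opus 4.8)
The plan is to reduce the claimed inequality to the corresponding scalar subadditivity inequality $|a+b|^q \le |a|^q + |b|^q$ for $0<q\le 1$, applied twice: once at the level of the $\ell_p$ norms within each group, and once at the level of summing the $q$-th powers over the groups. Concretely, fix $i\in\{1,\dots,r\}$ and write $u = x_{\mathcal{G}_i}$, $v = y_{\mathcal{G}_i}$. Since $p\ge 1$, the ordinary triangle inequality for $\|\cdot\|_p$ gives $\|u\|_p \le \|v\|_p + \|u-v\|_p$. Because the function $t\mapsto t^q$ is nondecreasing and subadditive on $[0,\infty)$ for $0<q\le 1$, it follows that
\[
\|x_{\mathcal{G}_i}\|_p^q = \|u\|_p^q \le \bigl(\|v\|_p + \|u-v\|_p\bigr)^q \le \|v\|_p^q + \|u-v\|_p^q = \|y_{\mathcal{G}_i}\|_p^q + \|x_{\mathcal{G}_i}-y_{\mathcal{G}_i}\|_p^q.
\]

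Next I would sum this over $i=1,\dots,r$. By definition \eqref{eq-Lpq norm}, $\|x\|_{p,q}^q = \sum_{i=1}^r \|x_{\mathcal{G}_i}\|_p^q$ when $q>0$, so summing the per-group bound yields directly
\[
\|x\|_{p,q}^q \le \sum_{i=1}^r \|y_{\mathcal{G}_i}\|_p^q + \sum_{i=1}^r \|x_{\mathcal{G}_i}-y_{\mathcal{G}_i}\|_p^q = \|y\|_{p,q}^q + \|x-y\|_{p,q}^q,
\]
and rearranging gives $\|x\|_{p,q}^q - \|y\|_{p,q}^q \le \|x-y\|_{p,q}^q$, which is exactly the assertion. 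Note that because the grouping is a fixed partition of $\{1,\dots,n\}$, we have $(x-y)_{\mathcal{G}_i} = x_{\mathcal{G}_i} - y_{\mathcal{G}_i}$, so no bookkeeping subtlety arises in identifying the third term.

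The argument is entirely elementary; there is no real obstacle, only two small points worth stating carefully. The first is the scalar fact that $(s+t)^q \le s^q + t^q$ for $s,t\ge 0$ and $0<q\le 1$; this is standard (it follows, e.g., from concavity of $t\mapsto t^q$ together with $0^q=0$, or from the inequality \eqref{eq-Jensen} applied to the two-term vector $(s,t)$ with exponents $q\le 1$), so I would either cite it or dispatch it in one line. The second is that the hypothesis $p\ge 1$ is used only to invoke the genuine triangle inequality for $\|\cdot\|_p$; for $p<1$ one would instead need $\|u\|_p^p \le \|v\|_p^p + \|u-v\|_p^p$ and the chain of exponents would have to be handled differently, which is why the lemma is stated for $p\ge 1$. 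The case $q=1$ is of course just the ordinary triangle inequality for the $\ell_{p,1}$ norm, recovered automatically.
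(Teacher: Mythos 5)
Your proof is correct and follows essentially the same route as the paper: the groupwise inequality $\|x_{\mathcal{G}_i}\|_p^q-\|y_{\mathcal{G}_i}\|_p^q\le \|x_{\mathcal{G}_i}-y_{\mathcal{G}_i}\|_p^q$ obtained from the triangle inequality for $\|\cdot\|_p$ (using $p\ge 1$) together with the subadditivity of $t\mapsto t^q$ (a consequence of \eqref{eq-Jensen}), followed by summation over the groups via \eqref{eq-Lpq norm}. You merely spell out the elementary scalar step that the paper leaves implicit.
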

\begin{proof}
By the subadditivity of $\ell_p$ norm and \eqref{eq-Jensen}, it is easy to see that
\[
\|x_{\mathcal{G}_i}\|_p^q-\|y_{\mathcal{G}_i}\|_p^q\le \|x_{\mathcal{G}_i}-y_{\mathcal{G}_i}\|_p^q,\quad {\rm for}~ i=1,\dots,r.
\]
Consequently, the conclusion directly follows from \eqref{eq-Lpq norm}.
\end{proof}

The following lemma will be beneficial to studying properties of the lower-order REC in Proposition \ref{prop-REC-relation}.
\begin{lemma}\label{lem-IneQ-3}
Let $\gamma\ge 1$, and two finite sequences $\{y_i:i\in \mathcal{I}\}$ and $\{x_j:j\in \mathcal{J}\}$ satisfy that $y_i\ge x_j\ge 0$ for all $(i,j)\in \mathcal{I} \times \mathcal{J}$.
If $\sum_{i\in \mathcal{I}} y_i\ge \sum_{j\in \mathcal{J}} x_j$, then $\sum_{i\in \mathcal{I}} y_i^\gamma \ge \sum_{j\in \mathcal{J}} x_j^\gamma$.
\end{lemma}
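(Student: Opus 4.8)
The plan is to reduce the comparison of the $\gamma$-th power sums to a comparison of the ordinary sums, by exploiting the extremal values of the two sequences. The crucial observation is that the hypothesis $y_i\ge x_j\ge 0$ for \emph{all} pairs $(i,j)\in\mathcal{I}\times\mathcal{J}$ is equivalent to the single inequality
\[
m:=\min_{i\in\mathcal{I}} y_i\ \ge\ M:=\max_{j\in\mathcal{J}} x_j\ \ge\ 0 .
\]
If $\mathcal{J}=\emptyset$ the claim is trivial; if $\mathcal{I}=\emptyset$ then $\sum_{j\in\mathcal{J}}x_j\le\sum_{i\in\mathcal{I}}y_i=0$ forces every $x_j=0$ and again the claim is trivial; and if $M=0$ all $x_j$ vanish. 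So we may assume $\mathcal{I},\mathcal{J}\neq\emptyset$ and $m\ge M>0$.

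Next I would invoke the monotonicity of the power function $t\mapsto t^{\gamma-1}$ on $[0,\infty)$, which holds precisely because $\gamma\ge 1$ (this is the only place the hypothesis $\gamma\ge1$ is used). For each $j\in\mathcal{J}$ we have $0\le x_j\le M$, hence $x_j^{\gamma-1}\le M^{\gamma-1}$ and therefore $x_j^\gamma=x_j^{\gamma-1}x_j\le M^{\gamma-1}x_j$. Symmetrically, for each $i\in\mathcal{I}$ we have $y_i\ge m$, hence $y_i^{\gamma-1}\ge m^{\gamma-1}$ and therefore $y_i^\gamma=y_i^{\gamma-1}y_i\ge m^{\gamma-1}y_i$.

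Summing each of these two families of inequalities over its index set, and then inserting the hypothesis $\sum_{j\in\mathcal{J}}x_j\le\sum_{i\in\mathcal{I}}y_i$ together with $M^{\gamma-1}\le m^{\gamma-1}$ (which follows from $M\le m$ and $\gamma-1\ge0$), yields the chain
\[
\sum_{j\in\mathcal{J}} x_j^\gamma\ \le\ M^{\gamma-1}\sum_{j\in\mathcal{J}} x_j\ \le\ m^{\gamma-1}\sum_{j\in\mathcal{J}} x_j\ \le\ m^{\gamma-1}\sum_{i\in\mathcal{I}} y_i\ \le\ \sum_{i\in\mathcal{I}} y_i^\gamma ,
\]
which is exactly the desired conclusion.

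There is no substantive obstacle here; the argument is a two-line estimate once the right quantities $m$ and $M$ are isolated. The only points demanding any care are the degenerate cases handled above and the role of $\gamma\ge1$: for $0<\gamma<1$ the bound $M^{\gamma-1}\le m^{\gamma-1}$ reverses and the statement genuinely fails, so the hypothesis cannot be relaxed.
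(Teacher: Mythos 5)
Your proof is correct, and it takes a genuinely different and more elementary route than the paper's. The paper argues through an averaged quantity: it sets $\alpha:=\min_{i\in\mathcal{I}}y_i$ and $\bar y:=\frac{1}{|\mathcal{I}|}\sum_{i\in\mathcal{I}}y_i$, lower-bounds $\sum_{i\in\mathcal{I}}y_i^\gamma\ge|\mathcal{I}|\,\bar y^{\gamma}$ via the power-mean inequality quoted from Huang--Yang (Lemma 4.1(ii) there), writes $\sum_{j\in\mathcal{J}}x_j=M\alpha+\beta$ with an integer $M$ and remainder $\beta\in[0,\alpha)$, upper-bounds $\sum_{j\in\mathcal{J}}x_j^\gamma\le M\alpha^{\gamma}+\alpha^{\gamma-1}\beta$, and then compares the two bounds by a case distinction ($|\mathcal{I}|>M$ versus $|\mathcal{I}|\le M$) together with a Bernoulli-type expansion of $(M\alpha+\beta)^{\gamma}$. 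You instead bound both power sums directly against the linear sums using the extremal values $m=\min_{i}y_i$ and $M=\max_j x_j$, giving the one-line chain $\sum_j x_j^{\gamma}\le M^{\gamma-1}\sum_j x_j\le m^{\gamma-1}\sum_i y_i\le\sum_i y_i^{\gamma}$; in fact your argument would work with $m$ alone, since $x_j\le m$ already gives $x_j^{\gamma}\le m^{\gamma-1}x_j$. What your approach buys is brevity and self-containedness: no external lemma, no integer decomposition, no case analysis, and the degenerate cases are dispatched cleanly at the start; the paper's approach buys nothing extra here beyond reusing a lemma it cites elsewhere. Your closing remark that the conclusion fails for $0<\gamma<1$ (e.g. $y=\{4\}$, $x=\{1,1,1,1\}$, $\gamma=\tfrac12$) is also a correct and worthwhile observation.
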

\begin{proof}
Set $\bar{y}:=\frac{1}{|\mathcal{I}|}\sum_{i\in \mathcal{I}} y_i$ and $\alpha:=\min_{i\in \mathcal{I}} y_i$. By \cite[Lemma 4.1(ii)]{HuangYang03}, one has that
\begin{equation}\label{eq-lem-IneQ-1}
\sum_{i\in \mathcal{I}} y_i^\gamma\ge \frac{1}{|\mathcal{I}|^{\gamma-1}}\left(\sum_{i\in \mathcal{I}} y_i\right)^\gamma=|\mathcal{I}|\bar{y}^\gamma.
\end{equation}
On the other hand, let $M\in \mathbb{N}$ and $\beta\in [0,\alpha)$ be such that $\sum_{j\in \mathcal{J}} x_j=M\alpha+\beta$.
Observing $\gamma\ge 1$ and $0\le x_j\le \alpha$ for all $j\in \mathcal{J}$, we obtain that $x_j^\gamma\le x_j \alpha^{\gamma-1}$, and thus,
$\sum_{j\in \mathcal{J}}x_j^\gamma\le M\alpha^\gamma+\alpha^{\gamma-1}\beta$.
By \eqref{eq-lem-IneQ-1}, it remains to show that
\begin{equation}\label{eq-lem-IneQ-2}
|\mathcal{I}|\bar{y}^\gamma\ge M\alpha^\gamma+\alpha^{\gamma-1}\beta.
\end{equation}
If $|\mathcal{I}|> M$, the relation \eqref{eq-lem-IneQ-2} is trivial since $\bar{y}\ge \alpha> \beta$;
otherwise, $|\mathcal{I}|\le M$, from the facts that $|\mathcal{I}|\bar{y}\ge M\alpha+\beta$ (i.e., $\sum_{i\in \mathcal{I}} y_i\ge \sum_{j\in \mathcal{J}} x_j$) and that $\gamma\ge 1$, it follows that
\[
|\mathcal{I}|\bar{y}^\gamma\ge M^{1-\gamma}(M\alpha+\beta)^\gamma\ge M^{1-\gamma}(M^\gamma\alpha^\gamma+\gamma M^{\gamma-1}\alpha^{\gamma-1}\beta)
\ge M\alpha^\gamma+\alpha^{\gamma-1}\beta.
\]
Therefore, we obtain the relation \eqref{eq-lem-IneQ-2}, and the proof is complete.
\end{proof}

\subsection{Group restricted eigenvalue conditions}
This subsection aims at the development of the critical conditions on the matrix $A$ to guarantee the oracle property and the global recovery bound of the $\ell_{p,q}$ regularization problem \eqref{eq-GSOP}. In particular, we will focus on the restricted eigenvalue condition (REC), and extend it to the lower-order setting and equip it with the group structure.

In the scenario of sparse optimization, given the sparsity level $s$, it is always assumed that the $2s$-sparse minimal eigenvalue of $A^\top A$ is positive (see, e.g., \cite{Bickel09,Bunea07,Meinshausen09}), that is,
\begin{equation}\label{eq-REC0}
\phi_{\min}(2s):=\min_{\|x\|_0\le 2s} \frac{x^\top A^\top Ax}{x^\top x}>0,
\end{equation}
which is the minimal eigenvalue of any $2s\times 2s$ dimensional submatrix.
It is well-known that the solution at sparsity level $s$ of the linear system $Ax=b$ is unique if the condition \eqref{eq-REC0} is satisfied;
otherwise, assume that there are two distinct vectors $\hat{x}$ and $\tilde{x}$ such that $A\hat{x}=A\tilde{x}$ and $\|\hat{x}\|_0=\|\tilde{x}\|_0=s$.
Then such $x:=\hat{x}-\tilde{x}$ is a vector so that $Ax=0$ and $\|x\|_0\le 2s$, and thus $\phi_{\min}(2s)=0$, which is contradict with \eqref{eq-REC0}.
Therefore, if the $2s$-sparse minimal eigenvalue of $A^\top A$  is zero (i.e., $\phi_{\min}(2s)=0$), one has no hope of recovering the true sparse solution from noisy observations.

However, only the condition \eqref{eq-REC0} is not enough and some further condition is required to maintain the nice recovery of the regularization problem; see \cite{Bickel09,Bunea07,Meinshausen09,Geer09,TZhang09} and references therein.
For example, the REC is introduced in Bickel et al. \cite{Bickel09} to investigate the $\ell_2$ consistency of the $\ell_1$ regularization problem (Lasso), where the minimum in \eqref{eq-REC0} is replaced by the minimum over a restricted set of vectors measured by an $\ell_1$ norm inequality and the denominator is replaced by the $\ell_2$ norm of only a part of $x$.

We now introduce the lower-order REC.
Note that the residual $\hat{x}:=x^*(\ell_q)-\bar{x}$, where $x^*(\ell_q)$ is an optimal solution of $\ell_q$ regularization problem and $\bar{x}$ is a sparse solution of $Ax=b$, of the $\ell_q$ regularization problem always satisfies
\begin{equation}\label{eq-RECq}
\|\hat{x}_{\mathcal{S}^c}\|_q\le \|\hat{x}_{\mathcal{S}}\|_q,
\end{equation}
where $\mathcal{S}$ is the support of $\bar{x}$.
Thus we introduce a lower-order REC, where the minimum is taken over a restricted set measured by an $\ell_q$ norm inequality such as \eqref{eq-RECq},
for establishing the global recovery bound of the $\ell_q$ regularization problem.
Given $s\le t \ll n$, $x\in \R^n$ and $\mathcal{I}\subset\{1,\dots,n\}$, we denote by $\mathcal{I}(x;t)$ the subset of $\{1,\dots,n\}$
corresponding to the first $t$ largest coordinates in absolute value of $x$ in $\mathcal{I}^c$.
\begin{definition}
Let $0\le q\le 1$. The $q$-restricted eigenvalue condition relative to $(s,t)$ ($q$-{\rm REC}$(s,t)$) is said to be satisfied if
\[
\phi_q(s,t):=\min \left\{\frac{\|Ax\|_2}{\|x_{\mathcal{T}}\|_{2}}:|\mathcal{I}|\le s,\|x_{{\mathcal{I}^c}}\|_q\le \|x_{\mathcal{I}}\|_q, \mathcal{T}=\mathcal{I}(x;t)\cup \mathcal{I} \right\}>0.
\]
\end{definition}
The $q$-REC describes a kind of restricted positive definiteness of $A^\top A$, which is valid only for the vectors satisfying the relation measured by an $\ell_q$ norm. The $q$-REC presents a unified framework of the REC-type conditions when $q\in [0,1]$. In particular, we note by definition that $1$-REC reduces to the classical REC \cite{Bickel09}, and that $\phi_{\min}(2s)=\phi_0^2(s,s)$, and thus
\[
\mbox{\eqref{eq-REC0}\quad $\Leftrightarrow$ \quad $0$-REC$(s,s)$ is satisfied.}
\]

It is well-known in the literature that the 1-REC is a stronger condition than the 0-REC (i.e., \eqref{eq-REC0}).
A natural question arises what are the relationship between the general $q$-RECs.
To answer this question, associated with the $q$-REC, we consider the feasible set
\[
C_q(s):=\{x\in \R^n: \|x_{{\mathcal{I}^c}}\|_q\le \|x_{\mathcal{I}}\|_q~{\rm for}~{\rm some}~ |\mathcal{I}|\le s\},
\]
 which is a cone.
Since the objective function associated with the $q$-REC is homogeneous, the $q$-REC$(s,t)$ says that the null space of $A$ does not cross over $C_q(s)$.
Figure \ref{fig-RECs} presents the geometric interpretation of the $q$-RECs. It is shown that $C_0(s)\subseteq C_{1/2}(s) \subseteq C_1(s)$, and thus
\[
\mbox{1-REC $\Rightarrow$ 1/2-REC $\Rightarrow$ 0-REC}.
\]
It is also observed from Figure \ref{fig-RECs} that the gap between the 1-REC and 1/2-REC and that between 1/2-REC and 0-REC are the matrices whose null spaces all fall in the cones of $C_1(s)\setminus C_{1/2}(s)$ and $C_{1/2}(s)\setminus C_0(s)$, respectively.


We now provide a rigorous proof in the following proposition to identify the relationship between the feasible sets $C_{q}(s)$ and between the general $q$-RECs: the lower the $q$, the smaller the cone $C_{q}(s)$, and the weaker the $q$-REC.

\begin{proposition}\label{prop-REC-relation}
Let $0\le q_1\le q_2\le 1$ and $1\le s\le t\ll n$. Then the following statements are true:
\begin{enumerate}[{\rm (i)}]
  \item $C_{q_1}(s)\subseteq C_{q_2}(s)$, and
  \item if the $q_2$-{\rm REC}$(s,t)$ holds, then the $q_1$-{\rm REC}$(s,t)$ holds.
\end{enumerate}
\end{proposition}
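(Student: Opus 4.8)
The plan is to prove (i) first --- by normalizing the witnessing index set and then invoking Lemma~\ref{lem-IneQ-3} --- and to deduce (ii) from (i) together with a comparison of the two denominators $\|x_{\mathcal T}\|_2$ appearing in the definitions of $\phi_{q_1}$ and $\phi_{q_2}$.

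For (i), take $x\in C_{q_1}(s)$, with a witnessing set $\mathcal I$ (so $|\mathcal I|\le s$ and $\|x_{\mathcal I^c}\|_{q_1}\le\|x_{\mathcal I}\|_{q_1}$). The first step is to replace $\mathcal I$ by the index set $\mathcal I^*$ of the $s$ largest components of $x$ in absolute value (padded with zero coordinates if $\|x\|_0<s$). Since $t\mapsto t^{q_1}$ is nondecreasing, $\mathcal I^*$ collects the $s$ largest of the numbers $|x_i|^{q_1}$, so $\sum_{i\in\mathcal I^*}|x_i|^{q_1}\ge\sum_{i\in\mathcal I}|x_i|^{q_1}$; subtracting each side from $\|x\|_{q_1}^{q_1}$ gives $\|x_{(\mathcal I^*)^c}\|_{q_1}^{q_1}\le\|x_{\mathcal I^c}\|_{q_1}^{q_1}$, and hence
\[
\|x_{(\mathcal I^*)^c}\|_{q_1}^{q_1}\ \le\ \|x_{\mathcal I^c}\|_{q_1}^{q_1}\ \le\ \|x_{\mathcal I}\|_{q_1}^{q_1}\ \le\ \|x_{\mathcal I^*}\|_{q_1}^{q_1},
\]
so $\mathcal I^*$ also witnesses $x\in C_{q_1}(s)$. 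When $q_1>0$ I would then apply Lemma~\ref{lem-IneQ-3} with $\gamma=q_2/q_1\ge1$ to the finite sequences $\{|x_i|^{q_1}:i\in\mathcal I^*\}$ (as the $y_i$) and $\{|x_j|^{q_1}:j\in(\mathcal I^*)^c\}$ (as the $x_j$): the domination $y_i\ge x_j\ge 0$ holds because $\mathcal I^*$ contains the largest components, and $\sum y_i\ge\sum x_j$ is exactly the witnessing inequality just obtained; the conclusion $\sum y_i^{\gamma}\ge\sum x_j^{\gamma}$ reads $\|x_{\mathcal I^*}\|_{q_2}^{q_2}\ge\|x_{(\mathcal I^*)^c}\|_{q_2}^{q_2}$, which is $x\in C_{q_2}(s)$. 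The boundary case $q_1=0$ has to be done by hand (the exponent $\gamma$ degenerates): here $C_0(s)=\{x:\|x\|_0\le 2s\}$, so the top-$s$ set $\mathcal I^*$ leaves at most $s$ nonzero entries of $x$ outside, each no larger in modulus than the smallest entry inside, whence $\|x_{(\mathcal I^*)^c}\|_{q_2}^{q_2}\le\|x_{\mathcal I^*}\|_{q_2}^{q_2}$ for every $q_2\in(0,1]$.

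For (ii), assume $\phi_{q_2}(s,t)>0$; I would show $\phi_{q_1}(s,t)\ge\phi_{q_2}(s,t)$. Let $x\neq 0$ be any point admissible in the definition of $\phi_{q_1}(s,t)$, with witnessing set $\mathcal I$ and $\mathcal T=\mathcal I(x;t)\cup\mathcal I$. As above, pass to the top-$s$ set $\mathcal I^*$: it still witnesses $x\in C_{q_1}(s)$, and by (i) it witnesses $x\in C_{q_2}(s)$, so the pair $(x,\mathcal I^*)$ with $\mathcal T^*:=\mathcal I^*(x;t)\cup\mathcal I^*$ is admissible in the definition of $\phi_{q_2}(s,t)$, giving $\|Ax\|_2/\|x_{\mathcal T^*}\|_2\ge\phi_{q_2}(s,t)$. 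It then remains to observe that $\mathcal T^*$ consists precisely of the $s+t$ largest components of $x$, whereas $|\mathcal T|\le|\mathcal I|+t\le s+t$; hence $\|x_{\mathcal T}\|_2^2$ is a sum of at most $s+t$ of the numbers $|x_i|^2$ and is therefore bounded by the sum of the $s+t$ largest of them, which is $\|x_{\mathcal T^*}\|_2^2$. Combining, $\|Ax\|_2/\|x_{\mathcal T}\|_2\ge\|Ax\|_2/\|x_{\mathcal T^*}\|_2\ge\phi_{q_2}(s,t)$, and taking the infimum over all admissible $x$ yields $\phi_{q_1}(s,t)\ge\phi_{q_2}(s,t)>0$.

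I expect the main obstacle to be recognizing that one should first normalize the witnessing index set to the top-$s$ set $\mathcal I^*$: this is what makes the domination hypothesis of Lemma~\ref{lem-IneQ-3} available in (i) and, in (ii), what allows the denominator set $\mathcal T$ to be dominated by the canonical set $\mathcal T^*$ of the $s+t$ largest components, uniformly in $q$. Everything after that reduction is routine support bookkeeping, apart from the short separate treatment of $q_1=0$.
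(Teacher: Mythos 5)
Your proof is correct and follows essentially the same route as the paper's: pass to the top-$s$ witnessing set and apply Lemma~\ref{lem-IneQ-3} with $\gamma=q_2/q_1$ to get (i), then deduce (ii) from the cone inclusion. The only difference is that you spell out details the paper leaves implicit --- why the top-$s$ set still witnesses membership, the separate treatment of the boundary case $q_1=0$ (where $\gamma=q_2/q_1$ degenerates), and the denominator comparison $\|x_{\mathcal{T}}\|_2\le\|x_{\mathcal{T}^*}\|_2$ needed to pass from $C_{q_1}(s)\subseteq C_{q_2}(s)$ to $\phi_{q_1}(s,t)\ge\phi_{q_2}(s,t)$.
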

\begin{proof}
\begin{enumerate}[{\rm (i)}]
\item Let $x\in C_{q_1}(s)$.
We use $\mathcal{I}_*$ to denote the index set of the first $s$ largest
coordinates in absolute value of $x$. Since $x\in C_{q_1}(s)$, it follows that $\|x_{\mathcal{I}_*^c}\|_{q_1}\le \|x_{\mathcal{I}_*}\|_{q_1}$ ($|\mathcal{I}_*|\le s$ due to the construction of $\mathcal{I}_*$). By Lemma \ref{lem-IneQ-3} (taking $\gamma=q_2/q_1$), one has that
\[
\|x_{\mathcal{I}_*^c}\|_{q_2}\le \|x_{\mathcal{I}_*}\|_{q_2},
\]
that is, $x\in C_{q_2}(s)$. Hence it follows that $C_{q_1}(s)\subseteq C_{q_2}(s)$.
\item As proved by (i) that $C_{q_1}(s)\subseteq C_{q_2}(s)$, by the definition of $q$-REC, it follows that
\[
\phi_{q_1}(s,t)\ge \phi_{q_2}(s,t)>0.
\]
The proof is complete.
\end{enumerate}
\end{proof}

To the best of our knowledge, this is the first work on introducing the lower-order REC and establishing the relationship of the lower-order RECs.
In the following, we provide a counter example to show that the reverse of Proposition \ref{prop-REC-relation} is not true.
\begin{example}[A matrix satisfying $1/2$-REC but not REC]\label{ex-REC}
Consider the matrix
\[
A=\left(\begin{matrix}
   a &a+c &a-c \\
   \tilde{a} &\tilde{a}-\tilde{c} &\tilde{a}+\tilde{c}
  \end{matrix}\right)\in \R^{2\times 3},
\]
where $a>c>0$ and $\tilde{a}>\tilde{c}>0$.
This matrix $A$ does not satisfy the REC(1,1). Indeed, by letting $\mathcal{J}=\{1\}$ and $x=(2,-1,-1)^\top$,
we have $Ax=0$ and thus $\phi(1,1)=0$.

However, $A$ satisfies the $1/2$-REC(1,1). It suffices to show that $\phi_{1/2}(1,1)>0$.
Let $x=(x_1,x_2,x_3)^\top$ satisfy the constraint associated with $1/2$-REC(1,1).
As $s=1$, the deduction is divided into the following three cases.
\begin{enumerate}[{\rm (i)}]
  \item $\mathcal{J}=\{1\}$. Then
      \begin{equation}\label{eq-exp-0}
      |x_1|\ge \|x_{\mathcal{J}^c}\|_{1/2}=|x_2|+|x_3|+2|x_2|^{1/2}|x_3|^{1/2}.
      \end{equation}
      Without loss of generality, we assume $|x_1|\ge |x_2|\ge|x_3|$. Hence, $\mathcal{T}=\{1,2\}$ and
      \begin{equation}\label{eq-exp-2}
      \frac{\|Ax\|_2}{\|x_{\mathcal{T}}\|_2}\ge \frac{\min\{a,\tilde{a}\}|x_1+x_2+x_3|+\min\{c,\tilde{c}\}|x_2-x_3|}{|x_1|+|x_2|}.
      \end{equation}
      If $|x_2|\le \frac13|x_1|$, \eqref{eq-exp-2} reduces to
      \begin{equation}\label{eq-exp-a1}
      \frac{\|Ax\|_2}{\|x_{\mathcal{T}}\|_2}\ge\frac{\frac{\min\{a,\tilde{a}\}}{3}|x_1|}{\frac{4}{3}|x_1|}=\frac{\min\{a,\tilde{a}\}}{4}.
      \end{equation}
      If $|x_2|\ge \frac13|x_1|$, substituting \eqref{eq-exp-0} into \eqref{eq-exp-2}, one has that
      \begin{equation}\label{eq-exp-a2}
      \frac{\|Ax\|_2}{\|x_{\mathcal{T}}\|_2}\ge \frac{2\min\{a,\tilde{a}\}|x_2|^{1/2}|x_3|^{1/2}+\min\{c,\tilde{c}\}|x_2-x_3|}{4|x_2|}
      \ge\left\{\begin{matrix}
      \frac{\min\{c,\tilde{c}\}}{8},&{|x_3|\le \frac{1}{2}|x_2|,}\\ \frac{\min\{a,\tilde{a}\}}{4}, &|x_3|\ge \frac{1}{2}|x_2|.
      \end{matrix}\right.
      \end{equation}
  \item $\mathcal{J}=\{2\}$. Since $\mathcal{T}=\{2,1\}$ or $\{2,3\}$, it follows from \cite[Lemma 4.1(ii)]{HuangYang03} that
      \begin{equation}\label{eq-exp-1}
      |x_2|\ge \|x_{\mathcal{J}^c}\|_{1/2}\ge |x_1|+|x_3|.
      \end{equation}
      Thus, it is easy to verify that $\|x_{\mathcal{T}}\|_2\le 2|x_2|$ and that
      \begin{equation}\label{eq-exp-a3}
      \frac{\|Ax\|_2}{\|x_{\mathcal{T}}\|_2}\ge \frac{|ax_1+(a+c)x_2+(a-c)x_3|}{2|x_2|}=\frac{|a(x_1+x_2+\frac{a-c}{a}x_3)+cx_2|}{2|x_2|}\ge \frac{c}{2},
      \end{equation}
      where the last inequality follows from \eqref{eq-exp-1} and the fact that $a>c$.
  \item $\mathcal{J}=\{3\}$. Similar to the deduction of {\rm (ii)}, we have that
      \begin{equation}\label{eq-exp-b3}
      \frac{\|Ax\|_2}{\|x_{\mathcal{T}}\|_2}\ge \frac{|\tilde{a}x_1+(\tilde{a}-\tilde{c})x_2+(\tilde{a}+\tilde{c})x_3|}{2|x_3|}\ge \frac{\tilde{c}}{2}.
      \end{equation}
\end{enumerate}
 Therefore, by \eqref{eq-exp-a1}, \eqref{eq-exp-a2}, \eqref{eq-exp-a3} and \eqref{eq-exp-b3}, one has that
      $\phi_{1/2}(1,1)\ge \frac18\min\{c,\tilde{c}\}>0$, and thus, the matrix $A$ satisfies the $1/2$-REC(1,1).
\end{example}

In order to establish the oracle property and the global recovery bound for the $\ell_{p,q}$ regularization problem,
we further introduce the notion of group restricted eigenvalue condition (GREC).
Given $S\le N\ll r$, $x\in \R^n$ and $\mathcal{J}\subset\{1,\dots,r\}$,
we use ${\rm rank}_i(x)$ to denote the rank of $\|x_{\mathcal{G}_i}\|_p$ among $\{\|x_{\mathcal{G}_j}\|_p: j\in\mathcal{J}^c\}$ (in a decreasing order),
$\mathcal{J}(x;N)$ to denote the index set of the first $N$ largest groups in the value of $\|x_{\mathcal{G}_i}\|_p$ among $\{\|x_{\mathcal{G}_j}\|_p: j\in\mathcal{J}^c\}$, that is,
\[
\mathcal{J}(x;N):=\left\{i\in\mathcal{J}^c:{\rm rank}_i(x)\in\{1,\dots,N\}\right\}.
\]
Furthermore, by letting $R:=\lceil \frac{r-|\mathcal{J}|}{N} \rceil$, we denote
\begin{equation}\label{eq-Jk}
\mathcal{J}_k(x;N):=\left\{\begin{matrix}
   \left\{i\in\mathcal{J}^c:{\rm rank}_i(x)\in\{kN+1,\dots,(k+1)N\}\right\},&{k=1,\dots,R-1,}\\
   \left\{i\in\mathcal{J}^c:{\rm rank}_i(x)\in\{RN+1,\dots,r-|\mathcal{J}|\}\right\}, &k=R.
\end{matrix}\right.
\end{equation}


Note that the residual $\hat{x}:=x^*(\ell_{p,q})-\bar{x}$ of the $\ell_{p,q}$ regularization problem always satisfies
$\|\hat{x}_{\mathcal{G}_{\mathcal{S}^c}}\|_{p,q}\le \|\hat{x}_{\mathcal{G}_\mathcal{S}}\|_{p,q}$.
Thus we introduce the notion of GREC, where the minimum is taken over a restricted set measured by an $\ell_{p,q}$ norm inequality, as follows.
\begin{definition}\label{def-GREC}
Let $0<q\le p\le 2$.
The $(p,q)$-group restricted eigenvalue condition relative to $(S,N)$ ($(p,q)$-{\rm GREC}$(S,N)$) is said to be satisfied if
\[
\phi_{p,q}(S,N):=\min \left\{\frac{\|Ax\|_2}{\|x_{\mathcal{G}_\mathcal{N}}\|_{p,2}}:|\mathcal{J}|\le S,\|x_{\mathcal{G}_{\mathcal{J}^c}}\|_{p,q}\le \|x_{\mathcal{G}_\mathcal{J}}\|_{p,q}, \mathcal{N}=\mathcal{J}(x;N)\cup \mathcal{J} \right\}>0.
\]
\end{definition}
The $(p,q)$-GREC extends the $q$-REC to the setting equipping with a pre-defined group structure.
Handling the components in each group as one element, the $(p,q)$-GREC admits the fewer degree of freedom, which is $S$, about $s/n_{\max}$, on its associated constraint than that of the $q$-REC, and thus it characterizes a weaker condition than the $q$-REC.
For example, the $0$-REC$(s,s)$ is to indicate the restricted positive definiteness of $A^\top A$, which is valid only for the vectors whose cardinality is less than $2s$; while the $(p,0)$-GREC$(S,S)$ is to describe the restricted positive definiteness of $A^\top A$ on any $2S$-group support, whose degree of freedom is much less than the $2s$-support. Thus the $(p,0)$-GREC$(S,S)$ provides a broader condition than the $0$-REC$(s,s)$.
Similar to the proof of Proposition \ref{prop-REC-relation}, we can show that
if $0\le q_1\le q_2\le 1\le p\le 2$ and the $(p,q_2)$-{\rm GREC}$(S,N)$ holds, then the $(p,q_1)$-{\rm GREC}$(S,N)$ also holds.

We end this subsection by providing the following lemma, which will be useful in establishing
the global recovery bound for the $\ell_{p,q}$ regularization problem in Theorem \ref{thm-RecoverBound}.
\begin{lemma}\label{lem-bound-Nc}
Let $0<q\le 1\le p$, $\tau\ge 1$ and $x\in \R^n$, $\mathcal{N}:=\mathcal{J}(x;N)\cup \mathcal{J}$ and $\mathcal{J}_k:=\mathcal{J}_k(x;N)$ for $k=1,\dots,R$. Then the following inequalities hold
\[
\|x_{\mathcal{G}_{\mathcal{N}^c}}\|_{p,\tau}\le \sum_{k=1}^R\|x_{\mathcal{G}_{\mathcal{J}_k}}\|_{p,\tau}\le N^{\frac{1}{\tau}-\frac{1}{q}} \|x_{\mathcal{G}_{\mathcal{J}^c}}\|_{p,q}.
\]
\end{lemma}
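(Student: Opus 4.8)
The plan is to treat the two inequalities separately; both will reduce to the power-mean inequality \eqref{eq-Jensen} once the combinatorial structure of the blocks $\mathcal{J}_k(x;N)$ is unpacked, with the second inequality handled by the classical ``shift-by-one-block'' device. Throughout I abbreviate $\mathcal{J}_0:=\mathcal{J}(x;N)$, so that by the very construction of the blocks in \eqref{eq-Jk} the sets $\mathcal{J}_0,\mathcal{J}_1,\dots,\mathcal{J}_R$ are pairwise disjoint, their union is $\mathcal{J}^c$, and $\mathcal{N}^c=\{1,\dots,r\}\setminus(\mathcal{J}(x;N)\cup\mathcal{J})=\mathcal{J}_1\cup\dots\cup\mathcal{J}_R$.

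For the left inequality I would simply note that disjointness gives $\|x_{\mathcal{G}_{\mathcal{N}^c}}\|_{p,\tau}^\tau=\sum_{k=1}^R\|x_{\mathcal{G}_{\mathcal{J}_k}}\|_{p,\tau}^\tau$, and then apply \eqref{eq-Jensen} with exponents $\gamma_1=1\le\gamma_2=\tau$ to the finite sequence $a_k:=\|x_{\mathcal{G}_{\mathcal{J}_k}}\|_{p,\tau}$, which yields $\big(\sum_k a_k^\tau\big)^{1/\tau}\le\sum_k a_k$. That is the whole of the left inequality.

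The right inequality is the substantive part. The ordering built into the ranking means that for every $k=1,\dots,R$ one has $\|x_{\mathcal{G}_i}\|_p\le\|x_{\mathcal{G}_j}\|_p$ whenever $i\in\mathcal{J}_k$ and $j\in\mathcal{J}_{k-1}$; combining this with $|\mathcal{J}_k|\le N$ and $|\mathcal{J}_{k-1}|=N$, and bounding the minimum of the $q$-th powers over $\mathcal{J}_{k-1}$ by their average, I would chain
\[
\|x_{\mathcal{G}_{\mathcal{J}_k}}\|_{p,\tau}\le N^{1/\tau}\max_{i\in\mathcal{J}_k}\|x_{\mathcal{G}_i}\|_p\le N^{1/\tau}\Big(\tfrac{1}{N}\sum_{j\in\mathcal{J}_{k-1}}\|x_{\mathcal{G}_j}\|_p^q\Big)^{1/q}=N^{\frac1\tau-\frac1q}\,\|x_{\mathcal{G}_{\mathcal{J}_{k-1}}}\|_{p,q}.
\]
Summing over $k=1,\dots,R$ and re-indexing gives $\sum_{k=1}^R\|x_{\mathcal{G}_{\mathcal{J}_k}}\|_{p,\tau}\le N^{\frac1\tau-\frac1q}\sum_{k=0}^{R-1}\|x_{\mathcal{G}_{\mathcal{J}_k}}\|_{p,q}$. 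Finally I would collapse the last sum by superadditivity of $t\mapsto t^{1/q}$ (valid since $1/q\ge1$): writing $c_k:=\sum_{j\in\mathcal{J}_k}\|x_{\mathcal{G}_j}\|_p^q$, \eqref{eq-Jensen} with $\gamma_1=q\le\gamma_2=1$ applied to $(c_k^{1/q})_k$ gives $\sum_{k=0}^{R-1}\|x_{\mathcal{G}_{\mathcal{J}_k}}\|_{p,q}=\sum_{k=0}^{R-1}c_k^{1/q}\le\big(\sum_{k=0}^{R-1}c_k\big)^{1/q}\le\|x_{\mathcal{G}_{\mathcal{J}^c}}\|_{p,q}$, the last step because $\bigcup_{k=0}^{R-1}\mathcal{J}_k\subseteq\mathcal{J}^c$. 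Assembling the chain yields the right inequality.

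I expect the only real obstacle to be bookkeeping rather than depth: one must make sure that the block appearing on the right of the per-block estimate (namely $\mathcal{J}_{k-1}$) always carries exactly $N$ elements, since that is precisely what produces the factor $N^{-1/q}$ with the correct (negative) exponent when passing from the minimum to the average — this is the reason for pulling the top block $\mathcal{J}(x;N)$ into the chain as $\mathcal{J}_0$, and for letting only the final tail block be short or empty (in which case the corresponding term on the left vanishes and the per-block estimate holds trivially). The degenerate case $\mathcal{N}^c=\emptyset$, i.e.\ $\mathcal{J}^c=\mathcal{J}(x;N)$, should be disposed of at the outset since then both inequalities are trivial.
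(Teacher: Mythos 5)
Your proposal is correct and follows essentially the same route as the paper's proof: the same shift-by-one-block estimate comparing $\mathcal{J}_k$ with $\mathcal{J}_{k-1}$ (the paper phrases it via $\|x_{\mathcal{G}_j}\|_p^q\le \frac{1}{N}\|x_{\mathcal{G}_{\mathcal{J}_{k-1}}}\|_{p,q}^q$ rather than via the maximum, which is the same bound), the norm inequality \eqref{eq-Jensen} for the left inequality, and the superadditivity of $t\mapsto t^{1/q}$ (the paper cites Lemma 4.1 of Huang--Yang) to collapse $\sum_k\|x_{\mathcal{G}_{\mathcal{J}_{k-1}}}\|_{p,q}\le\|x_{\mathcal{G}_{\mathcal{J}^c}}\|_{p,q}$. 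Your explicit attention to the blocks $\mathcal{J}_0,\dots,\mathcal{J}_{R-1}$ having exactly $N$ elements and to the degenerate case is a point the paper leaves implicit, but it does not change the argument.
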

\begin{proof}
By the definition of $\mathcal{J}_k$ (cf. \eqref{eq-Jk}), for all $j\in \mathcal{J}_{k}$, one has that
\[
\|x_{\mathcal{G}_j}\|_p\le \|x_{\mathcal{G}_i}\|_p,\quad  {\rm for}~ i\in \mathcal{J}_{k-1},
\]
and thus
\[
\|x_{\mathcal{G}_j}\|_p^q\le \frac{1}{N}\sum_{i\in \mathcal{J}_{k-1}}\|x_{\mathcal{G}_i}\|_p^q=\frac{1}{N}\|x_{\mathcal{G}_{\mathcal{J}_{k-1}}}\|_{p,q}^q.
\]
Consequently, we obtain that
\[
\|x_{\mathcal{G}_{\mathcal{J}_k}}\|_{p,\tau}^\tau=\sum_{i\in \mathcal{J}_k} \|x_{\mathcal{G}_i}\|_p^\tau\le N^{1-\tau/q} \|x_{\mathcal{G}_{\mathcal{J}_{k-1}}}\|_{p,q}^\tau.
\]
Further by \cite[Lemma 4.1]{HuangYang03} ($\tau\ge 1$ and $q\le 1$), it follows that
\[
\begin{array}{llll}
\|x_{\mathcal{G}_{\mathcal{N}^c}}\|_{p,\tau}&= \left(\sum_{k=1}^R \sum_{i\in \mathcal{J}_k}\|x_{\mathcal{G}_{i}}\|_{p}^\tau\right)^{1/\tau}
\le \sum_{k=1}^R\|x_{\mathcal{G}_{\mathcal{J}_k}}\|_{p,\tau}\\
&\le N^{\frac{1}{\tau}-\frac{1}{q}}\sum_{k=1}^R\|x_{\mathcal{G}_{\mathcal{J}_{k-1}}}\|_{p,q}
\le N^{\frac{1}{\tau}-\frac{1}{q}}\|x_{\mathcal{G}_{\mathcal{J}^c}}\|_{p,q}.
\end{array}
\]
The proof is complete.
\end{proof}

\subsection{Global recovery bound}
In recent years, many articles have been devoted to establishing the oracle property and the global recovery bound for the $\ell_1$ regularization problem \eqref{eq-L1} under the RIP or REC; see, e.g., \cite{Bickel09,Bunea07,Meinshausen09,Geer09,TZhang09}.
However, to the best of our knowledge, few papers concentrate on investigating these properties for the lower-order regularization problem.

In the preceding subsections, we have introduced the general notion of $(p,q)$-GREC.
Under the $(p,q)$-{\rm GREC}$(S,S)$, the solution of $Ax=b$ with group sparsity being $S$ is unique.
In this subsection, we will present the oracle property and the global recovery bound for the $\ell_{p,q}$ regularization problem \eqref{eq-GSOP} under the $(p,q)$-GREC.
The oracle property provides an upper bound on the square error of the linear system and the violation of the true nonzero groups for each point in
the level set of the objective function of \eqref{eq-GSOP}
\begin{equation*}\label{eq-level}
{\rm lev}_F(\bar{x}):=\{x\in \R^n: \|Ax-b\|_2^2+\lambda\|x\|_{p,q}^q\le \lambda\|\bar{x}\|_{p,q}^q\}.
\end{equation*}

\begin{proposition}\label{prop-OI}
Let $0<q\le 1\le p$, $S>0$ and let the $(p,q)$-{\rm GREC}$(S,S)$ hold.
Let $\bar{x}$ be the unique solution of $Ax=b$ at a group sparsity level $S$, and $\mathcal{S}$ be the index set of nonzero groups of $\bar{x}$.
Let $K$ be the smallest integer such that $2^{K-1}q\ge 1$. Then, for any $x^*\in {\rm lev}_F(\bar{x})$, the following oracle inequality holds
\begin{equation}\label{eq-OI}
\|Ax^*-A\bar{x}\|_2^2+\lambda\|x_{\mathcal{G}_{\mathcal{S}^c}}^*\|_{p,q}^q\le \lambda^\frac{2}{2-q}S^{\left(1-2^{-K}\right)\frac{2}{2-q}}/\phi_{p,q}^{\frac{2q}{2-q}}(S,S).
\end{equation}
Moreover, letting ${\mathcal{N}_*}:=\mathcal{S}\cup \mathcal{S}(x^*;S)$, we have
\[
\|x_{\mathcal{G}_{\mathcal{N}_*}}^*-\bar{x}_{\mathcal{G}_{\mathcal{N}_*}}\|_{p,2}^2\le \lambda^\frac{2}{2-q}S^{\left(1-2^{-K}\right)\frac{2}{2-q}}/\phi_{p,q}^{\frac{4}{2-q}}(S,S).
\]
\end{proposition}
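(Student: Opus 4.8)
Here is a proof plan.

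The plan is to adapt the classical ``basic inequality'' argument of Lasso theory to the group, lower-order setting. Put $\hat{x}:=x^*-\bar{x}$. Since $b=A\bar{x}$, the condition $x^*\in{\rm lev}_F(\bar{x})$ reads
\[
\|A\hat{x}\|_2^2+\lambda\|x^*\|_{p,q}^q\le\lambda\|\bar{x}\|_{p,q}^q.
\]
For $q>0$ the functional $\|\cdot\|_{p,q}^q$ is additive over groups, and $\bar{x}$ is supported on the groups in $\mathcal{S}$, so $\|x^*\|_{p,q}^q=\|x^*_{\mathcal{G}_{\mathcal{S}}}\|_{p,q}^q+\|x^*_{\mathcal{G}_{\mathcal{S}^c}}\|_{p,q}^q$ and $\|\bar{x}\|_{p,q}^q=\|\bar{x}_{\mathcal{G}_{\mathcal{S}}}\|_{p,q}^q$. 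Bounding $\|\bar{x}_{\mathcal{G}_{\mathcal{S}}}\|_{p,q}^q-\|x^*_{\mathcal{G}_{\mathcal{S}}}\|_{p,q}^q\le\|\hat{x}_{\mathcal{G}_{\mathcal{S}}}\|_{p,q}^q$ by Lemma~\ref{lem-IneQ-1-a} (applied to the groups in $\mathcal{S}$) and using $\hat{x}_{\mathcal{G}_{\mathcal{S}^c}}=x^*_{\mathcal{G}_{\mathcal{S}^c}}$, I would obtain the basic inequality
\[
\|A\hat{x}\|_2^2+\lambda\|\hat{x}_{\mathcal{G}_{\mathcal{S}^c}}\|_{p,q}^q\le\lambda\|\hat{x}_{\mathcal{G}_{\mathcal{S}}}\|_{p,q}^q,
\]
which, in particular, gives the cone inequality $\|\hat{x}_{\mathcal{G}_{\mathcal{S}^c}}\|_{p,q}\le\|\hat{x}_{\mathcal{G}_{\mathcal{S}}}\|_{p,q}$; hence $\hat{x}$ is feasible for the $(p,q)$-GREC$(S,S)$ with index set $\mathcal{J}=\mathcal{S}$.

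Next I would invoke Definition~\ref{def-GREC}. Since $\bar{x}_{\mathcal{G}_j}=0$ for $j\in\mathcal{S}^c$, the magnitudes $\|x^*_{\mathcal{G}_j}\|_p$ and $\|\hat{x}_{\mathcal{G}_j}\|_p$ agree on $\mathcal{S}^c$, so $\mathcal{S}(x^*;S)=\mathcal{S}(\hat{x};S)$ and $\mathcal{N}_*=\mathcal{S}\cup\mathcal{S}(\hat{x};S)$ is exactly the set $\mathcal{N}$ attached to $\hat{x}$ in the GREC with $(\mathcal{J},N)=(\mathcal{S},S)$. Therefore
\[
\|\hat{x}_{\mathcal{G}_{\mathcal{N}_*}}\|_{p,2}\le\|A\hat{x}\|_2/\phi_{p,q}(S,S).
\]
On the other hand, $\mathcal{S}\subseteq\mathcal{N}_*$ and $\|\cdot\|_{p,2}$ is nondecreasing when the group index set is enlarged, so Lemma~\ref{lem-IneQ-2}(ii), applied to the $S$-group vector $\hat{x}_{\mathcal{G}_{\mathcal{S}}}$, gives
\[
\|\hat{x}_{\mathcal{G}_{\mathcal{S}}}\|_{p,q}^q\le S^{1-2^{-K}}\|\hat{x}_{\mathcal{G}_{\mathcal{S}}}\|_{p,2}^q\le S^{1-2^{-K}}\|\hat{x}_{\mathcal{G}_{\mathcal{N}_*}}\|_{p,2}^q\le S^{1-2^{-K}}\,\|A\hat{x}\|_2^q/\phi_{p,q}^q(S,S).
\]

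Combining the last display with the basic inequality yields $\|A\hat{x}\|_2^2+\lambda\|\hat{x}_{\mathcal{G}_{\mathcal{S}^c}}\|_{p,q}^q\le\lambda S^{1-2^{-K}}\|A\hat{x}\|_2^q/\phi_{p,q}^q(S,S)$. Discarding the nonnegative term $\lambda\|\hat{x}_{\mathcal{G}_{\mathcal{S}^c}}\|_{p,q}^q$ and dividing by $\|A\hat{x}\|_2^q$ (the degenerate case $A\hat{x}=0$ forces $\hat{x}=0$ via the GREC and the cone inequality, and all stated inequalities are then trivial) leaves $\|A\hat{x}\|_2^{2-q}\le\lambda S^{1-2^{-K}}/\phi_{p,q}^q(S,S)$, i.e.
\[
\|A\hat{x}\|_2\le\lambda^{\frac{1}{2-q}}\,S^{(1-2^{-K})\frac{1}{2-q}}/\phi_{p,q}^{\frac{q}{2-q}}(S,S).
\]
Substituting this estimate for the factor $\|A\hat{x}\|_2^q$ on the right-hand side of the combined inequality and collecting the exponents of $\lambda$, $S$, $\phi_{p,q}$ (which come out to $\frac{2}{2-q}$, $(1-2^{-K})\frac{2}{2-q}$, $-\frac{2q}{2-q}$) produces the oracle inequality~\eqref{eq-OI}. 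Substituting it instead into $\|\hat{x}_{\mathcal{G}_{\mathcal{N}_*}}\|_{p,2}^2\le\|A\hat{x}\|_2^2/\phi_{p,q}^2(S,S)$, recalling $\|\hat{x}_{\mathcal{G}_{\mathcal{N}_*}}\|_{p,2}=\|x^*_{\mathcal{G}_{\mathcal{N}_*}}-\bar{x}_{\mathcal{G}_{\mathcal{N}_*}}\|_{p,2}$ and $-\frac{2q}{2-q}-2=-\frac{4}{2-q}$, gives the second inequality of the proposition.

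The scheme is largely bookkeeping once the pieces are assembled; the one genuinely delicate point is the identification $\mathcal{N}_*=\mathcal{S}(\hat{x};S)\cup\mathcal{S}$, which is what lets the GREC — stated in terms of the residual $\hat{x}$ — be applied with the set $\mathcal{N}_*$ defined in the statement through $x^*$. A secondary point is to route the $S$-power through Lemma~\ref{lem-IneQ-2}(ii) together with $\mathcal{S}\subseteq\mathcal{N}_*$ so that the exponent appears as $1-2^{-K}$; everything else (group-additivity of $\|\cdot\|_{p,q}^q$, Lemma~\ref{lem-IneQ-1-a}, monotonicity of $\|\cdot\|_{p,2}$, and solving $t^{2-q}\le c$) is elementary.
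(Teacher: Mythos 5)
Your proposal is correct and follows essentially the same route as the paper's proof: the same basic inequality obtained from membership in ${\rm lev}_F(\bar{x})$ together with Lemma \ref{lem-IneQ-1-a} and Lemma \ref{lem-IneQ-2}(ii), the same verification of the cone condition so that the $(p,q)$-GREC$(S,S)$ applies to the residual $\hat{x}=x^*-\bar{x}$, the same resolution of $\|A\hat{x}\|_2^{2-q}\le \lambda S^{1-2^{-K}}/\phi_{p,q}^q(S,S)$, and the same back-substitutions yielding \eqref{eq-OI} and the $\|\cdot\|_{p,2}$ bound on $\mathcal{N}_*$. The extra details you supply (the identification $\mathcal{S}(x^*;S)=\mathcal{S}(\hat{x};S)$, the monotonicity step $\|\hat{x}_{\mathcal{G}_{\mathcal{S}}}\|_{p,2}\le\|\hat{x}_{\mathcal{G}_{\mathcal{N}_*}}\|_{p,2}$, and the degenerate case $A\hat{x}=0$) are points the paper leaves implicit, and your treatment of them is sound.
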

\begin{proof}
Let $x^*\in {\rm lev}_F(\bar{x})$. That is, $\|Ax^*-b\|_2^2+\lambda\|x^*\|_{p,q}^q\le \lambda\|\bar{x}\|_{p,q}^q$. By Lemmas \ref{lem-IneQ-2}(ii) and \ref{lem-IneQ-1-a}, one has that
\begin{equation}\label{eq-lem-OI-1}
\begin{array}{llll}
\|Ax^*-A\bar{x}\|_2^2+\lambda \|x_{\mathcal{G}_{\mathcal{S}^c}}^*\|_{p,q}^q&\le \lambda\|\bar{x}_{\mathcal{G}_\mathcal{S}}\|_{p,q}^q-\lambda\|x_{\mathcal{G}_\mathcal{S}}^*\|_{p,q}^q\\
&\le \lambda \|\bar{x}_{\mathcal{G}_\mathcal{S}}-x_{\mathcal{G}_\mathcal{S}}^*\|_{p,q}^q\\
&\le \lambda S^{1-2^{-K}} \|\bar{x}_{\mathcal{G}_\mathcal{S}}-x_{\mathcal{G}_\mathcal{S}}^*\|_{p,2}^q.
\end{array}
\end{equation}
Noting that
\begin{equation*}
\|x_{\mathcal{G}_{\mathcal{S}^c}}^*-\bar{x}_{\mathcal{G}_{\mathcal{S}^c}}\|_{p,q}^q-\|x_{\mathcal{G}_\mathcal{S}}^*-\bar{x}_{\mathcal{G}_\mathcal{S}}\|_{p,q}^q\le \|x_{\mathcal{G}_{\mathcal{S}^c}}^*\|_{p,q}^q-(\|\bar{x}_{\mathcal{G}_\mathcal{S}}\|_{p,q}^q-\|x_{\mathcal{G}_\mathcal{S}}^*\|_{p,q}^q)=\|x^*\|_{p,q}^q-\|\bar{x}\|_{p,q}^q\le 0.
\end{equation*}
Then $(p,q)$-{\rm GREC}$(S,S)$ implies that
\begin{equation}\label{eq-lem-OI-2}
\|\bar{x}_{\mathcal{G}_\mathcal{S}}-x_{\mathcal{G}_\mathcal{S}}^*\|_{p,2}\le\|Ax^*-A\bar{x}\|_2/\phi_{p,q}(S,S).
\end{equation}
From \eqref{eq-lem-OI-1} and \eqref{eq-lem-OI-2}, it follows that
\begin{equation}\label{eq-lem-OI-3}
\|Ax^*-A\bar{x}\|_2^2+\lambda \|x_{\mathcal{G}_{\mathcal{S}^c}}^*\|_{p,q}^q\le \lambda S^{1-2^{-K}}\|Ax^*-A\bar{x}\|_2^q/\phi_{p,q}^q(S,S),
\end{equation}
and consequently,
\begin{equation}\label{eq-lem-OI-4}
\|Ax^*-A\bar{x}\|_2\le \lambda^{\frac{1}{2-q}}S^{\left(1-2^{-K}\right)/(2-q)}/\phi_{p,q}^{\frac{q}{2-q}}(S,S).
\end{equation}
Therefore, by \eqref{eq-lem-OI-3} and \eqref{eq-lem-OI-4}, we arrive at the oracle inequality \eqref{eq-OI}.
Furthermore, by the definition of $\mathcal{N}_*$, $(p,q)$-{\rm GREC}$(S,S)$ implies that
\[
\begin{array}{llll}
\|x_{\mathcal{G}_{\mathcal{N}_*}}^*-\bar{x}_{\mathcal{G}_{\mathcal{N}_*}}\|_{p,2}^2
\le \|Ax^*-A\bar{x}\|_2^2/\phi_{p,q}^2(S,S) \le \lambda^\frac{2}{2-q}S^{\left(1-2^{-K}\right)\frac{2}{2-q}}/\phi_{p,q}^{\frac{4}{2-q}}(S,S).
\end{array}
\]
The proof is complete.
\end{proof}

One of the main results of this section is presented as follows, where we establish the global recovery bound for the $\ell_{p,q}$ regularization problem under the $(p,q)$-GREC. We will apply oracle inequality \eqref{eq-OI} and Lemma \ref{lem-bound-Nc} in our proof.

\begin{theorem}\label{thm-RecoverBound}
Let $0<q\le 1\le p\le 2$, $S>0$ and let the $(p,q)$-{\rm GREC}$(S,S)$ hold.
Let $\bar{x}$ be the unique solution of $Ax=b$ at a group sparsity level $S$, and $\mathcal{S}$ be the index set of nonzero groups of $\bar{x}$.
Let $K$ be the smallest integer such that $2^{K-1}q\ge 1$. Then, for any $x^*\in {\rm lev}_F(\bar{x})$, the following global recovery bound for \eqref{eq-GSOP} holds
\begin{equation}\label{eq-RB-1}
\|x^*-\bar{x}\|_2^2\le 2\lambda^\frac{2}{2-q}S^{\frac{q-2}{q}+\left(1-2^{-K}\right)\frac{4}{q(2-q)}}/\phi_{p,q}^{\frac{4}{2-q}}(S,S).
\end{equation}
More precisely,
\begin{equation}\label{eq-RB-Lp,q}
\|x^*-\bar{x}\|_2^2\le\left\{\begin{matrix}
   O\left(\lambda^\frac{2}{2-q}S\right),&{2^{K-1}q=1,}\\
   O\left(\lambda^\frac{2}{2-q}S^\frac{3-q}{2-q}\right), &{2^{K-1}q>1.}
\end{matrix}\right.
\end{equation}
\end{theorem}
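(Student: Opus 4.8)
The plan is to combine the oracle inequality from Proposition \ref{prop-OI} with the ``cone chain'' estimate of Lemma \ref{lem-bound-Nc}, applied with $\tau = 2$, $N = S$. Writing $\hat x := x^* - \bar x$ and $\mathcal{N}_* := \mathcal{S} \cup \mathcal{S}(x^*;S)$, I would split $\|\hat x\|_2^2$ according to the group structure: since $\|x\|_2 \le \|x\|_{p,2}$ for $1 \le p \le 2$ (a consequence of \eqref{eq-Jensen} applied groupwise, as recorded in the inequalities subsection), it suffices to bound $\|\hat x_{\mathcal{G}_{\mathcal{N}_*}}\|_{p,2}^2 + \|\hat x_{\mathcal{G}_{\mathcal{N}_*^c}}\|_{p,2}^2$. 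The first term is controlled directly by the ``moreover'' part of Proposition \ref{prop-OI}, giving $\|\hat x_{\mathcal{G}_{\mathcal{N}_*}}\|_{p,2}^2 \le \lambda^{2/(2-q)} S^{(1-2^{-K})\frac{2}{2-q}} / \phi_{p,q}^{4/(2-q)}(S,S)$.

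For the tail term, I would invoke Lemma \ref{lem-bound-Nc} with $\mathcal{J} = \mathcal{S}$, $N = S$, $\tau = 2$: this yields $\|\hat x_{\mathcal{G}_{\mathcal{N}_*^c}}\|_{p,2} \le S^{\frac12 - \frac1q}\,\|\hat x_{\mathcal{G}_{\mathcal{S}^c}}\|_{p,q}$, hence $\|\hat x_{\mathcal{G}_{\mathcal{N}_*^c}}\|_{p,2}^2 \le S^{1 - 2/q}\,\|\hat x_{\mathcal{G}_{\mathcal{S}^c}}\|_{p,q}^2$. Here I need to check that the residual $\hat x$ actually satisfies the cone constraint $\|\hat x_{\mathcal{G}_{\mathcal{S}^c}}\|_{p,q} \le \|\hat x_{\mathcal{G}_{\mathcal{S}}}\|_{p,q}$ so that the hypotheses of the earlier results apply — this follows from the chain of inequalities already displayed in the proof of Proposition \ref{prop-OI} (the line showing $\|x^*\|_{p,q}^q - \|\bar x\|_{p,q}^q \le 0$ forces $\|\hat x_{\mathcal{G}_{\mathcal{S}^c}}\|_{p,q}^q \le \|\hat x_{\mathcal{G}_{\mathcal{S}}}\|_{p,q}^q$). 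Now $\|\hat x_{\mathcal{G}_{\mathcal{S}^c}}\|_{p,q}^q = \|x^*_{\mathcal{G}_{\mathcal{S}^c}}\|_{p,q}^q$ since $\bar x_{\mathcal{G}_{\mathcal{S}^c}} = 0$, and the oracle inequality \eqref{eq-OI} bounds $\lambda \|x^*_{\mathcal{G}_{\mathcal{S}^c}}\|_{p,q}^q \le \lambda^{2/(2-q)} S^{(1-2^{-K})\frac{2}{2-q}} / \phi_{p,q}^{2q/(2-q)}(S,S)$, so $\|\hat x_{\mathcal{G}_{\mathcal{S}^c}}\|_{p,q}^q \le \lambda^{q/(2-q)} S^{(1-2^{-K})\frac{2}{2-q}} / \phi_{p,q}^{2q/(2-q)}(S,S)$; raising to the power $2/q$ gives $\|\hat x_{\mathcal{G}_{\mathcal{S}^c}}\|_{p,q}^2 \le \lambda^{2/(2-q)} S^{(1-2^{-K})\frac{4}{q(2-q)}} / \phi_{p,q}^{4/(2-q)}(S,S)$.

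Multiplying by $S^{1-2/q}$ and adding the two contributions, the common factor $\lambda^{2/(2-q)}/\phi_{p,q}^{4/(2-q)}(S,S)$ pulls out, leaving $S^{(1-2^{-K})\frac{2}{2-q}} + S^{1 - \frac2q + (1-2^{-K})\frac{4}{q(2-q)}}$. The arithmetic to check is that the second exponent equals $\frac{q-2}{q} + (1-2^{-K})\frac{4}{q(2-q)}$ and dominates the first (since $S \ge 1$ and the tail exponent is the larger of the two — this needs $q \le 1$), which then absorbs the first term at the cost of the factor $2$, producing \eqref{eq-RB-1}. For \eqref{eq-RB-Lp,q}: when $2^{K-1}q = 1$ we have $1 - 2^{-K} = 1 - q/2 = (2-q)/2$, so the dominant exponent becomes $\frac{q-2}{q} + \frac{2}{q} = 1$, giving $O(\lambda^{2/(2-q)} S)$; when $2^{K-1}q > 1$ the crudest usable bound is $1 - 2^{-K} < 1$, but a sharper bound $2^{-K} \ge q/4$ (valid since $2^{K-1}q < 2$, i.e.\ $2^{K} q < 4$) gives $1 - 2^{-K} \le 1 - q/4$, whence the exponent is at most $\frac{q-2}{q} + \frac{4 - q}{q(2-q)} = \frac{3-q}{2-q}$, yielding $O(\lambda^{2/(2-q)} S^{(3-q)/(2-q)})$.

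The main obstacle I anticipate is purely bookkeeping: keeping the exponents of $S$, $\lambda$, and $\phi_{p,q}$ straight through the repeated power-raising, and in particular verifying the two inequalities $2^{-K} \le q/2$ (giving the first case sharply) and $q/4 \le 2^{-K}$ (needed for the $S^{(3-q)/(2-q)}$ bound in the second case) from the definition of $K$ as the smallest integer with $2^{K-1} q \ge 1$. There is no conceptual difficulty beyond what Proposition \ref{prop-OI} and Lemma \ref{lem-bound-Nc} already supply; the whole argument is a matter of assembling those two estimates and simplifying.
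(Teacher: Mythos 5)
Your proposal is correct and follows essentially the same route as the paper's proof: split $\|x^*-\bar{x}\|_2^2$ over $\mathcal{N}_*$ and $\mathcal{N}_*^c$, bound the first part by the ``moreover'' estimate of Proposition \ref{prop-OI} and the tail by Lemma \ref{lem-bound-Nc} (with $\tau=2$, $N=S$) combined with the oracle inequality \eqref{eq-OI}, then compare exponents using the definition of $K$. The only slight imprecision is attributing the dominance of the tail exponent to $q\le 1$; the correct (and sufficient) reason is $2^{K-1}q\ge 1$, i.e.\ $2^{-K}\le q/2$, which you do identify in your final paragraph, and your case analysis via $2^{-K}> q/4$ when $2^{K-1}q>1$ matches the paper's argument exactly.
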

\begin{proof}
Let ${\mathcal{N}_*}:=\mathcal{S}\cup \mathcal{S}(x^*;S)$ as in Proposition \ref{prop-OI}. Since $p\le 2$, it follows from Lemma \ref{lem-bound-Nc} and Proposition \ref{prop-OI} that
\[
\|x_{\mathcal{G}_{\mathcal{N}_*^c}}^*\|_2^2\le \|x_{\mathcal{G}_{\mathcal{N}_*^c}}^*\|_{p,2}^2\le S^{1-2/q}\|x_{\mathcal{G}_{\mathcal{S}^c}}^*\|_{p,q}^2\le \lambda^\frac{2}{2-q}S^{\frac{q-2}{q}+\left(1-2^{-K}\right)\frac{4}{q(2-q)}}/\phi_{p,q}^{\frac{4}{2-q}}(S,S).
\]
Then by Proposition \ref{prop-OI}, one has that
\[
\begin{array}{llll}
\|x^*-\bar{x}\|_2^2&= \|x_{\mathcal{G}_{\mathcal{N}_*}}^*-\bar{x}_{\mathcal{G}_{\mathcal{N}_*}}\|_2^2+\|x_{\mathcal{G}_{\mathcal{N}_*^c}}^*\|_2^2\\
&\le \lambda^\frac{2}{2-q}S^{\left(1-2^{-K}\right)\frac{2}{2-q}}/\phi_{p,q}^{\frac{4}{2-q}}(S,S)+
\lambda^\frac{2}{2-q}S^{\frac{q-2}{q}+\left(1-2^{-K}\right)\frac{4}{q(2-q)}}/\phi_{p,q}^{\frac{4}{2-q}}(S,S)\\
&\le 2\lambda^\frac{2}{2-q}S^{\frac{q-2}{q}+\left(1-2^{-K}\right)\frac{4}{q(2-q)}}/\phi_{p,q}^{\frac{4}{2-q}}(S,S),
\end{array}
\]
where the last inequality follows from the fact that $2^{K-1}q\ge 1$.
In particular, if $2^{K-1}q=1$, then $\frac{q-2}{q}+\left(1-2^{-K}\right)\frac{4}{q(2-q)}=1$ and thus
\[
\|x^*-\bar{x}\|_2^2\le O\left(\lambda^\frac{2}{2-q}S\right).
\]
If $2^{K-1}q>1$, then $2^{K-2}q<1$. Hence, $\frac{q-2}{q}+\left(1-2^{-K}\right)\frac{4}{q(2-q)}<\frac{3-q}{2-q}$, and consequently
\[
\|x^*-\bar{x}\|_2^2\le O\left(\lambda^\frac{2}{2-q}S^\frac{3-q}{2-q}\right).
\]
The proof is complete.
\end{proof}


Theorem \ref{thm-RecoverBound} is an important theoretical result in that it provides the global recovery bound \eqref{eq-RB-Lp,q} for the general $\ell_{p,q}$ regularization problem \eqref{eq-GSOP}. In particular, when $x^*$ is a global optimal solution of \eqref{eq-GSOP} as assumed in \cite{Geer09}, Theorem \ref{thm-RecoverBound} provides an upper bound on the distance from the optimal solution $x^*$ to the true sparse solution $\bar{x}$. This result is significantly different from the previous works \cite{Bickel09,Bunea07,Meinshausen09,TZhang09}, our result here is deterministic and does not involve any kind of randomization, and thus does not have a nonzero probability of failure.
The following two corollaries show the recovery bounds for the group Lasso and $\ell_{p,1/2}$ regularization problem.
\begin{corollary}\label{coro-RB-1}
Let $\bar{x}$ be a solution of $Ax=b$, and $S$ be the group sparsity of $\bar{x}$.
Let $1\le p\le 2$, and let $x^*(\ell_{p,1})$ be an optimal solution of the $\ell_{p,1}$ regularization problem. Suppose that the $(p,1)$-{\rm GREC}$(S,S)$ holds.
Then the following recovery bound for the $\ell_{p,1}$ regularization problem holds
\begin{equation}\label{eq-RB-G1}
\|x^*(\ell_{p,1})-\bar{x}\|_2^2\le O\left(\lambda^2S\right).
\end{equation}
\end{corollary}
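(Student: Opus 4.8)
The plan is to obtain Corollary~\ref{coro-RB-1} as a direct specialization of Theorem~\ref{thm-RecoverBound} to the exponent $q=1$. First I would pin down the constant $K$: when $q=1$, the smallest integer with $2^{K-1}q\ge 1$ is $K=1$, so that $2^{K-1}q=1$ exactly. This places us in the first branch of \eqref{eq-RB-Lp,q}, which after substituting $q=1$ reads $\|x^*-\bar{x}\|_2^2\le O\!\left(\lambda^{2/(2-q)}S\right)=O\!\left(\lambda^2 S\right)$, i.e.\ precisely \eqref{eq-RB-G1}.

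Next I would verify that the standing hypotheses of Theorem~\ref{thm-RecoverBound} are met. We assume $1\le p\le 2$ and $q=1\in(0,1]$, so $0<q\le 1\le p\le 2$ holds. The $(p,1)$-GREC$(S,S)$ is assumed, and, as recorded just before Proposition~\ref{prop-OI}, this guarantees that $\bar{x}$ is the unique solution of $Ax=b$ at group sparsity level $S$; hence the uniqueness assumption on $\bar{x}$ in Theorem~\ref{thm-RecoverBound} is automatically satisfied once we know $S$ is the group sparsity of $\bar{x}$. It then only remains to check that an optimal solution $x^*(\ell_{p,1})$ of the $\ell_{p,1}$ regularization problem lies in the level set ${\rm lev}_F(\bar{x})$. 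This is immediate: since $b=A\bar{x}$ we have $F(\bar{x})=\|A\bar{x}-b\|_2^2+\lambda\|\bar{x}\|_{p,1}=\lambda\|\bar{x}\|_{p,q}^q$, and by optimality $F\bigl(x^*(\ell_{p,1})\bigr)\le F(\bar{x})=\lambda\|\bar{x}\|_{p,q}^q$, which is exactly the defining inequality of ${\rm lev}_F(\bar{x})$ with $q=1$.

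Finally, applying Theorem~\ref{thm-RecoverBound} to the admissible point $x^*=x^*(\ell_{p,1})\in{\rm lev}_F(\bar{x})$ in the case $2^{K-1}q=1$ delivers the bound \eqref{eq-RB-G1}. There is no genuinely hard step: the only point requiring care is the exponent bookkeeping when passing from the general bound \eqref{eq-RB-1} to the stated one, namely confirming that $\frac{q-2}{q}+\bigl(1-2^{-K}\bigr)\frac{4}{q(2-q)}=1$ when $q=1$ and $K=1$, so that the power of $S$ collapses to $S^{1}$ and the power of $\lambda$ to $\lambda^{2}$, while the factor $2/\phi_{p,1}^{4/(2-q)}(S,S)$ is absorbed into the $O(\cdot)$ constant. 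If one wishes, the same argument specialized with $r=n$, $n_{\max}=1$ and $p=1$ recovers the classical Lasso recovery bound \eqref{eq-RB-L1} of van de Geer and B\"uhlmann as a further special case.
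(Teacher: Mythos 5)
Your proposal is correct and matches the paper's (implicit) argument: the corollary is stated there as an immediate specialization of Theorem \ref{thm-RecoverBound} with $q=1$, for which $K=1$ and $2^{K-1}q=1$, so the first branch of \eqref{eq-RB-Lp,q} gives $O(\lambda^2 S)$. Your additional checks — that the $(p,1)$-GREC$(S,S)$ ensures uniqueness of $\bar{x}$ at group sparsity level $S$, and that optimality of $x^*(\ell_{p,1})$ together with $b=A\bar{x}$ places it in ${\rm lev}_F(\bar{x})$ — are exactly the verifications needed and are consistent with the paper.
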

Corollary \ref{coro-RB-1} extends the study of the $\ell_1$ regularization problem (Lasso) in \cite{Bickel09,Geer09}, where the recovery bound is given by
\begin{equation}\label{eq-RB-Lasso}
\|x^*(\ell_1)-\bar{x}\|_2^2\le O\left(\lambda^2s\right).
\end{equation}
Comparing with \eqref{eq-RB-Lasso}, Corollary \ref{coro-RB-1} provides the theoretical evidence for the phenomenon that
exploiting the group sparsity structure can enhance the recovery performance when $S\ll s$.

\begin{corollary}\label{coro-RB-2}
Let $\bar{x}$ be a solution of $Ax=b$, and $S$ be the group sparsity of $\bar{x}$.
Let $1\le p\le 2$, and let $x^*(\ell_{p,1/2})$ be a global optimal solution of the $\ell_{p,1/2}$ regularization problem. Suppose that the  $(p,1/2)$-{\rm GREC}$(S,S)$ holds.
Then the following global recovery bound for the $\ell_{p,1/2}$ regularization problem holds
\[
\|x^*(\ell_{p,1/2})-\bar{x}\|_2^2\le O\left(\lambda^{4/3}S\right).
\]
\end{corollary}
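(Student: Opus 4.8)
The plan is to obtain Corollary \ref{coro-RB-2} as a direct specialization of Theorem \ref{thm-RecoverBound} to the exponent $q=1/2$, exactly parallel to the way Corollary \ref{coro-RB-1} comes out of the case $q=1$. First I would verify that the hypotheses of Theorem \ref{thm-RecoverBound} are in force. The $(p,1/2)$-{\rm GREC}$(S,S)$ is assumed, and, as remarked just before Proposition \ref{prop-OI}, this already forces $\bar{x}$ to be the unique solution of $Ax=b$ at group sparsity level $S$. It then remains to check that a global optimal solution $x^*(\ell_{p,1/2})$ of \eqref{eq-GSOP} (with $q=1/2$) lies in ${\rm lev}_F(\bar{x})$; but since $b=A\bar{x}$, global optimality gives
\[
\|Ax^*(\ell_{p,1/2})-b\|_2^2+\lambda\|x^*(\ell_{p,1/2})\|_{p,1/2}^{1/2}=F(x^*(\ell_{p,1/2}))\le F(\bar{x})=\lambda\|\bar{x}\|_{p,1/2}^{1/2},
\]
which is precisely the defining inequality of the level set. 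Hence Theorem \ref{thm-RecoverBound} applies to $x^*(\ell_{p,1/2})$.

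Next I would pin down the integer $K$ associated with $q=1/2$: it is the smallest $K$ with $2^{K-1}\cdot\tfrac12\ge 1$, so $K=2$, and then $2^{K-1}q=2^{1}\cdot\tfrac12=1$. Thus we fall into the first branch of the dichotomy \eqref{eq-RB-Lp,q}, which gives
\[
\|x^*(\ell_{p,1/2})-\bar{x}\|_2^2\le O\!\left(\lambda^{\frac{2}{2-q}}S\right),
\]
and substituting $q=1/2$ turns $\tfrac{2}{2-q}$ into $\tfrac{4}{3}$, yielding the asserted bound $\|x^*(\ell_{p,1/2})-\bar{x}\|_2^2\le O(\lambda^{4/3}S)$. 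Should the explicit constant be wanted instead of the $O(\cdot)$ form, one simply evaluates \eqref{eq-RB-1} at $q=1/2$, $K=2$: the exponent of $S$ there is $\frac{q-2}{q}+(1-2^{-K})\frac{4}{q(2-q)}=-3+\tfrac34\cdot\tfrac{16}{3}=1$, and the bound becomes $2\,\lambda^{4/3}S/\phi_{p,1/2}^{8/3}(S,S)$.

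In truth there is no real obstacle here: all of the substantive work resides in Theorem \ref{thm-RecoverBound}, and the corollary is essentially bookkeeping. The only two points that each warrant a single line are the membership of a global minimizer in the level set and the uniqueness of $\bar{x}$, and these are immediate from the noiseless relation $b=A\bar{x}$ and from the GREC, respectively.
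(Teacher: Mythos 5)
Your proposal is correct and follows exactly the route the paper intends: the corollary is the specialization of Theorem \ref{thm-RecoverBound} to $q=1/2$ (so $K=2$, $2^{K-1}q=1$, exponent $\tfrac{2}{2-q}=\tfrac43$), with the two bookkeeping checks — that a global minimizer of $F$ lies in ${\rm lev}_F(\bar{x})$ since $F(\bar{x})=\lambda\|\bar{x}\|_{p,1/2}^{1/2}$, and that the $(p,1/2)$-GREC$(S,S)$ forces uniqueness of $\bar{x}$ at group sparsity level $S$ — handled just as the paper does. Your explicit constant $2\lambda^{4/3}S/\phi_{p,1/2}^{8/3}(S,S)$ also matches \eqref{eq-RB-1}, so there is nothing to add.
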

In particular, when $n_{max}=1$, that is, the problem is in absence of the group structure, Corollary \ref{coro-RB-2} exhibits the following global recovery bound for the $\ell_{1/2}$ regularization problem
\begin{equation}\label{eq-RB-L1/2}
\|x^*(\ell_{1/2})-\bar{x}\|_2^2\le O\left(\lambda^{4/3}s\right).
\end{equation}
%

Bound \eqref{eq-RB-G1} seems to be the best one in a sense that all other bounds $O\left(\lambda^{\frac{2}{2-q}}S\right)$ are not better than it when $q<1$ and the $(p,1)$-{\rm GREC}$(S,S)$ holds. However, when the $(p,1)$-{\rm GREC}$(S,S)$ does not hold but $(p,1/2)$-{\rm GREC}$(S,S)$ holds, we illustrate by an example that \eqref{eq-RB-G1} or \eqref{eq-RB-Lasso} does not hold but \eqref{eq-RB-L1/2} does and is also tight.
We will testify the recovery bound \eqref{eq-RB-L1/2} by using a global optimization method.

\begin{example}\label{ex-REC-NE}
By letting $a=\tilde{a}=2$ and $c=\tilde{c}=1$ in Example \ref{ex-REC}, we consider the following matrix:
\[
A=\left(\begin{matrix}
   2 &3 &1 \\
   2 &1 &3
  \end{matrix}\right).
\]
We assume $b=(2,2)^\top$ and then a true solution of $Ax=b$ is $\bar{x}=(1,0,0)^\top$. Denoting $x:=(x_1,x_2,x_3)^\top$, the objective function associated with the $\ell_1$ regularization problem \eqref{eq-L1} is
\[
\begin{array}{ll}
F(x)&:=\|Ax-b\|_2^2+\lambda\|x\|_1\\
&=(2x_1+3x_2+x_3-2)^2+(2x_1+x_2+3x_3-2)^2+\lambda(|x_1|+|x_2|+|x_3|).
\end{array}\]
Let $x^*(\ell_1):=(x_1^*,x_2^*,x_3^*)^\top$ be an optimal solution of problem \eqref{eq-L1}.
Without loss of generality, we assume $\lambda\le 1$.
The necessary condition of $x^*(\ell_1)$ being an optimal solution of \eqref{eq-L1} is $0\in \partial F(x^*(\ell_1))$, that is,
\begin{subequations}\label{eq-exp2}
\begin{align}
0&\in  16x_1^*+16x_2^*+16x_3^*-16+\lambda\partial|x_1^*|, \label{eq-exp2-a}\\
0&\in  16x_1^*+20x_2^*+12x_3^*-16+\lambda\partial|x_2^*|, \label{eq-exp2-b}\\
0&\in  16x_1^*+12x_2^*+20x_3^*-16+\lambda\partial|x_3^*|, \label{eq-exp2-c}
\end{align}
\end{subequations}
where
$\partial |\mu|:=\left\{\begin{matrix}
   sgn(\mu),&{\mu\neq 0,}\\ [-1,1], &\mu=0.
\end{matrix}\right.$

We first show that $x_i^*\ge 0$ for $i=1,2,3$ by contradiction. Indeed, if $x_1^*<0$, \eqref{eq-exp2-a} reduces to
\begin{equation*}
16x_1^*+16x_2^*+16x_3^*-16=\lambda.
\end{equation*}
Summing \eqref{eq-exp2-b} and \eqref{eq-exp2-c}, we further have
\[
\lambda=16x_1^*+16x_2^*+16x_3^*-16\in -\frac{\lambda}{2}(\partial|x_2^*|+\partial|x_3^*|),
\]
which implies that $x_2^*\le 0$ and $x_3^*\le 0$. Hence, it follows that $F(x^*)> F(0)$, which indicates that $x^*$ is not an optimal solution of \eqref{eq-L1}, and thus, $x_1^*<0$ is impossible.
Similarly, we can show that $x_2^*\ge 0$ and $x_3^*\ge 0$.

Next, we find the optimal solution $x^*(\ell_1)$ by only considering $x^*(\ell_1)\ge 0$. It is easy to obtain that the solution of \eqref{eq-exp2} and the corresponding objective value associated with \eqref{eq-L1} can be represented respectively by
\[x_1^*=1-\frac{\lambda}{16}-2x_3^*,\quad x_2^*=x_3^*~\left(0\le x_3^*\le \frac12-\frac{\lambda}{32}\right),
\quad {\rm and}\quad F\left(x^*(\ell_1)\right)=\lambda-\frac{\lambda^2}{32}.\]
Hence, $x^*(\ell_1):=\left(0,\frac12-\frac{\lambda}{32},\frac12-\frac{\lambda}{32}\right)^\top$ is an optimal solution of problem \eqref{eq-L1}.
The estimated error for such $x^*(\ell_1)$ is
\[
\|x^*(\ell_1)-\bar{x}\|_2^2=1+\frac12\left(1-\frac{\lambda}{16}\right)^2>1,
\]
which does not meet the recovery bound \eqref{eq-RB-Lasso} for each $\lambda\le 1$.

It is revealed from Example \ref{ex-REC} that this matrix $A$ satisfies the $1/2$-REC(1,1). Then the hypothesis of Corollary \ref{coro-RB-2} is verified, and thus, Corollary \ref{coro-RB-2} is applicable to establishing the recovery bound \eqref{eq-RB-L1/2} for the $\ell_{1/2}$ regularization problem.
Even though we cannot obtain the closed-form solution of this nonconvex $\ell_{1/2}$ regularization problem, as it is of only 3-dimensions, we use a global optimization method, the filled function method \cite{Ge90}, to find the global optimal solution $x^*(\ell_{1/2})$ and thus to testify the recovery bound \eqref{eq-RB-L1/2}.
This is done by computing the $\ell_{1/2}$ regularization problem for many $\lambda$ to plot the curve $\|x^*(\ell_{1/2})-\bar{x}\|_2^2$.
Figure \ref{fig-RB} illustrates the variation of the estimated error $\|x^*(\ell_{1/2})-\bar{x}\|_2^2$ and the bound $2\lambda^{4/3}$ (that is the right-hand side of \eqref{eq-RB-1}, where $S=1$ and $\phi_{1/2}(1,1)\le 1$ (cf. Example \ref{ex-REC})), when varying the regularization parameter $\lambda$ from $10^{-8}$ to $1$.

\begin{figure}[h]
\centering
  \includegraphics[width=8cm]{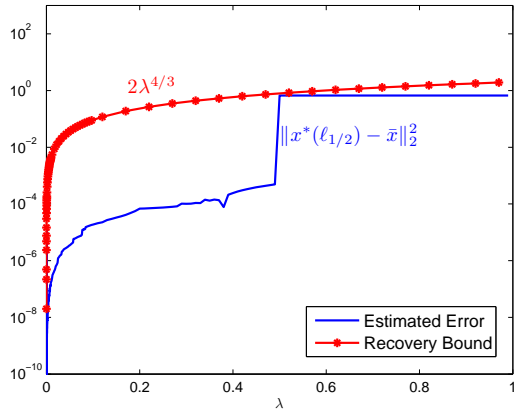}\\
  \caption{The illustration of the recovery bound \eqref{eq-RB-1} and estimated error.}
  \label{fig-RB}
\end{figure}
\end{example}

\subsection{Local recovery bound}
In the preceding subsection, we provided the global analysis of the recovery bound for the $\ell_{p,q}$ regularization problem under the $(p,q)$-GREC; see Theorem \ref{thm-RecoverBound}.
One can also see from Figure \ref{fig-RB} that the global recovery bound \eqref{eq-RB-L1/2} is tight for the $\ell_{1/2}$ regularization problem
as the curves come together at $\lambda\simeq 0.5$, but there is still a big gap for the improvement when $\lambda$ is small.

This subsection is devoted to providing a local analysis of the recovery bound for the $\ell_{p,q}$ regularization problem
by virtue of the technique of variational analysis \cite{Roc98}. For $x\in \R^n$ and $\delta\in\R_+$, $\mathbf{B}(x,\delta)$ denotes the open ball of radius $\delta$ centered at $x$.
For a lower semi-continuous (lsc) function  $f:\R^n\rightarrow \R$ and $x,w\in \R^n$, the subderivative of $f$ at $x$ along the direction $w$ is defined by
\[
df(\bar{x})(w):=\liminf_{\tau\downarrow 0, \;w'\rightarrow w}\frac{f(\bar{x}+\tau w')-f(\bar{x})}{\tau}.
\]
To begin with, we show in the following lemma a significant advantage of lower-order regularization over the $\ell_1$ regularization: the lower-order regularization term can easily induce the sparsity of the local minimum.
\begin{lemma}\label{meng-h-y}
Let $0<q<1\le p$. Let $f:\R^n\rightarrow \R$ be a lsc function and $df(0)(0)=0$. Then
the function  $F:=f+\lambda\|\cdot\|_{p,q}^{q}$ has a local minimum at $0$ with the first-order growth condition being fulfilled, i.e.,  there exist some $\epsilon>0$ and $\delta>0$ such that
\[
F(x)\ge F(0)+\epsilon\|x\|_2\quad\forall x\in \mathbf{B}(0,\delta).
\]
\end{lemma}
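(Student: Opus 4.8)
The plan is to exploit the mismatch between the sublinear growth of the $\ell_{p,q}$ penalty near $0$ (when $q<1$) and the at-worst-linear decay that $f$ can contribute, since $df(0)(0)=0$ only rules out a direction along which $f$ drops faster than linearly. First I would record what $df(0)(0)=0$ gives us quantitatively: by definition of the subderivative, $\liminf_{\tau\downarrow 0,\,w'\to 0}\frac{f(\tau w')-f(0)}{\tau}=0$, hence there is no sequence $x^k\to 0$ with $\frac{f(x^k)-f(0)}{\|x^k\|_2}\to -\infty$; more precisely, for every $M>0$ there is $\delta_M>0$ so that $f(x)-f(0)\ge -M\|x\|_2$ for all $x\in\mathbf{B}(0,\delta_M)$. (I would double-check this reduction: taking $w'=x/\|x\|_2$ scaled appropriately and $\tau=\|x\|_2$, the liminf being $0$, hence $\ge$ any negative number, forces exactly such a lower bound on a small ball — this is the one place a careful $\varepsilon$--$\delta$ argument is needed, and it is the main subtlety of the proof.)

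Next I would bound the penalty term from below by something of strictly smaller order than $\|x\|_2$. For $x\in\R^n$ with group structure, using $\|x_{\mathcal G_i}\|_p\ge \|x_{\mathcal G_i}\|_2\cdot c_0$ for a constant $c_0=c_0(p,n_{\max})$ (equivalence of norms on each group, with $p\ge 1$) and then $\ell_q$ versus $\ell_2$ comparison across groups, one gets $\|x\|_{p,q}^q=\sum_{i=1}^r\|x_{\mathcal G_i}\|_p^q\ge \big(\sum_i \|x_{\mathcal G_i}\|_p^2\big)^{q/2}=\|x\|_{p,2}^q\ge c_0^q\|x\|_2^q$, where for the middle step I use that $\sum_i a_i^q\ge(\sum_i a_i^2)^{q/2}$ for $q\le 2$ (equivalently $\|\cdot\|_q\ge\|\cdot\|_2$ on $\R^r$, which is \eqref{eq-Jensen}). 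Thus there is a constant $c_1>0$ with $\lambda\|x\|_{p,q}^q\ge \lambda c_1\|x\|_2^q$ for all $x$.

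Finally I would combine the two estimates on a small ball. Write $F(x)-F(0)=\big(f(x)-f(0)\big)+\lambda\|x\|_{p,q}^q\ge -M\|x\|_2+\lambda c_1\|x\|_2^q$ for $x\in\mathbf{B}(0,\delta_M)$. Since $q<1$, the function $t\mapsto \lambda c_1 t^q - M t$ behaves like $\lambda c_1 t^q$ as $t\downarrow 0$; concretely, choose $\delta\le\delta_M$ small enough that $\lambda c_1\|x\|_2^{q}\ge (M+1)\|x\|_2$ whenever $\|x\|_2\le\delta$ — this is possible because $\lambda c_1 t^{q-1}\to+\infty$ as $t\downarrow 0$. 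Then for all $x\in\mathbf{B}(0,\delta)$ we get $F(x)-F(0)\ge -M\|x\|_2+(M+1)\|x\|_2=\|x\|_2$, so the claim holds with $\epsilon=1$. I would note that the value of $M$ can be fixed first (say $M=1$) and $\delta$ then chosen accordingly, so there is no circularity; the only genuinely delicate point remains the translation of the hypothesis $df(0)(0)=0$ into the uniform lower bound $f(x)-f(0)\ge -M\|x\|_2$ on a neighborhood, which I would spell out via the definition of $\liminf$ over the joint limit $\tau\downarrow 0$, $w'\to w$.
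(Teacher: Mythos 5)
Your route is genuinely different from the paper's: the paper computes $d\varphi(0)=\delta_{\{0\}}$ for $\varphi:=\lambda\|\cdot\|_{p,q}^q$ via grouped separability (Rockafellar--Wets, Proposition 10.5), uses the sum rule $dF(0)\ge df(0)+d\varphi(0)$ (Proposition 10.9) and reads the first-order growth off $dF(0)\ge\delta_{\{0\}}$, whereas you argue directly by balancing a linear lower estimate for $f$ against the $q$-power growth of the penalty. That elementary strategy is viable, and your two estimates that do not involve $f$ are fine: $\lambda\|x\|_{p,q}^q\ge\lambda c_1\|x\|_2^q$ (norm equivalence within groups plus $\|\cdot\|_q\ge\|\cdot\|_2$ across groups), and the comparison $\lambda c_1 t^{q}\ge (M+1)t$ for small $t$ since $q<1$.

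The gap is in the step you yourself flag as the delicate one, and as written it fails. From $df(0)(0)=0$ you may conclude that there is no sequence $x^k\to0$ with $(f(x^k)-f(0))/\|x^k\|_2\to-\infty$, but the ``more precisely'' strengthening --- for \emph{every} $M>0$ there is $\delta_M>0$ with $f(x)-f(0)\ge-M\|x\|_2$ on $\mathbf{B}(0,\delta_M)$ --- does not follow and is false: $f=-2\|\cdot\|_2$ is Lipschitz (hence lsc and finite) with $df(0)(w)=-2\|w\|_2$, so $df(0)(0)=0$, yet the bound with $M=1$ fails on every ball. Hence your plan to ``fix $M=1$ first'' is not justified; the hypothesis only concerns the direction $w=0$, while the for-every-$M$ statement is equivalent to $df(0)(w)\ge0$ for all $w$. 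Your sketched justification is also not valid: with $\tau=\|x\|_2$ and $w'=x/\|x\|_2$ the vector $w'$ stays on the unit sphere and never enters the liminf over $w'\to0$. The correct and sufficient statement is the for-\emph{some}-$M$ version: there exist $M_0>0$, $\delta_0>0$ with $f(x)-f(0)\ge-M_0\|x\|_2$ on $\mathbf{B}(0,\delta_0)$. To prove it, argue by contradiction: if $x^k\to0$ satisfies $(f(x^k)-f(0))/\|x^k\|_2\to-\infty$, write $x^k=\tau_k w_k'$ with $\tau_k:=\sqrt{\|x^k\|_2\,(f(0)-f(x^k))}$; lower semicontinuity keeps $f(0)-f(x^k)$ bounded above, so $\tau_k\downarrow0$, while $\|w_k'\|_2=\sqrt{\|x^k\|_2/(f(0)-f(x^k))}\to0$ and $(f(\tau_k w_k')-f(0))/\tau_k=-\sqrt{(f(0)-f(x^k))/\|x^k\|_2}\to-\infty$, contradicting $df(0)(0)=0$. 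With this corrected version, take that $M_0$ in place of your $M=1$ and choose $\delta\le\delta_0$ with $\lambda c_1\delta^{q-1}\ge M_0+1$; the rest of your argument then goes through verbatim and gives $\epsilon=1$. So the flaw is real but local and repairable, and the repaired proof is a legitimate elementary alternative to the paper's subderivative-calculus argument.
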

\begin{proof}
Let $\varphi:=\lambda\|\cdot\|_{p,q}^{q}$ and then $F=f+\varphi$. Since $\varphi$ is grouped separable,
by \cite[Proposition 10.5]{Roc98}, it follows from the definition that $d\varphi(0)=\delta_{\{0\}}$ (where $\delta_X$ is the indicator function of $X$). Applying \cite[Proposition 10.9]{Roc98}, it follows that
\begin{equation}\label{boluo1}
dF(0)\ge df(0)+\delta_{\{0\}}.
\end{equation}
Since $f$ is finite and $df(0)(0)=0$,  its subderivative  $df(0)$ is proper (cf. \cite[Exercise 3.19]{Roc98}). Since further $df(0)(0)=0$, it yields that
$df(0)+\delta_{\{0\}}=\delta_{\{0\}}$.
Thus by \eqref{boluo1}, we obtain that $dF(0)\ge \delta_{\{0\}}$.   Therefore, by definition, there exist some $\epsilon>0$ and $\delta>0$ such that
\[
F(x)\ge F(0)+\epsilon\|x\|_2\quad\forall x\in \mathbf{B}(0,\delta).
  \]
The proof is complete.
\end{proof}

With the help of the above lemma, we can present in the following a local version of the  recovery bound. This is done by constructing a path of local minima depending on the regularization parameter $\lambda$ for the regularization problem, which starts from a  sparse solution of the original problem and shares the same support as this sparse solution has,  resulting in a sharper bound in terms of $\lambda^2$.

\begin{theorem}\label{thm-RB-local}
Let $\bar{x}$ be a solution of $Ax=b$, $S$ be the group sparsity of $\bar{x}$, and $B$ be a submatrix of $A$ consisting of its columns corresponding to the active components of $\bar{x}$.
Suppose that any nonzero group of $\bar{x}$ is active,
and that the columns of $A$ corresponding to the active components of $\bar{x}$ are linearly independent.
Let $0<q<1\le p$. Then there exist $\kappa>0$ and a path of local minima of problem \eqref{eq-GSOP}, $x^*(\lambda)$, such that
\[
\|x^*(\lambda)-\bar{x}\|_2^2\le \lambda^2 q^2 S \|(B^\top B)^{-1}\|^2\max_{\bar{x}_{\mathcal{G}_i}\neq 0} \left(\|\bar{x}_{\mathcal{G}_i}\|_p^{2(q-p)}\|\bar{x}_{\mathcal{G}_i}\|_{2p-2}^{2p-2}\right)\quad \forall \lambda<\kappa.
\]
\end{theorem}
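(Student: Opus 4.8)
The plan is to use the implicit function theorem to produce the desired path of local minima. Since any nonzero group of $\bar x$ is active, the support of $\bar x$ coincides with the union of the index sets $\mathcal{G}_i$ over $i\in\mathcal{S}$, and on this support the regularizer $\lambda\|x\|_{p,q}^q$ is smooth (because $p\ge 1$ and each $\|x_{\mathcal{G}_i}\|_p$ is bounded away from $0$ near $\bar x_{\mathcal{G}_i}\ne 0$). So I would first restrict attention to the reduced variable $y\in\R^{n_{\mathbf a}}$ collecting the active coordinates, and consider the reduced objective $G(y,\lambda):=\|By-b\|_2^2+\lambda\sum_{i\in\mathcal{S}}\|y_{\mathcal{G}_i}\|_p^q$, which is $C^1$ (indeed $C^2$) in a neighbourhood of $(\bar y,0)$, where $\bar y$ is the active part of $\bar x$. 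The stationarity equation is $\nabla_y G(y,\lambda)=2B^\top(By-b)+\lambda q\, v(y)=0$, where $v(y)$ is the (smooth) gradient of $\sum_i\|y_{\mathcal{G}_i}\|_p^q$. At $\lambda=0$ this is solved by $y=\bar y$ since $B\bar y=b$; and the Jacobian in $y$ at $(\bar y,0)$ is $2B^\top B$, which is invertible by the linear-independence hypothesis on the columns of $B$. The implicit function theorem then yields a $C^1$ path $\lambda\mapsto y^*(\lambda)$ with $y^*(0)=\bar y$, defined for $|\lambda|<\kappa$ for some $\kappa>0$.

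Next I would promote this stationary point of the reduced problem to a genuine local minimum of the full problem \eqref{eq-GSOP}. For $y^*(\lambda)$ close to $\bar y$, the Hessian $\nabla^2_{yy}G(y^*(\lambda),\lambda)=2B^\top B+O(\lambda)$ is positive definite, so $y^*(\lambda)$ is a strict local minimizer of $G(\cdot,\lambda)$, and hence $x^*(\lambda)$ (the vector with active part $y^*(\lambda)$ and the remaining groups set to zero) is a local minimizer of $F$ restricted to the subspace $\{x:x_{\mathcal G_{\mathcal S^c}}=0\}$. To handle perturbations that switch on a currently-zero group, I invoke Lemma \ref{meng-h-y}: writing $F$ as a function of the off-support block $z$ with the on-support block held at $y^*(\lambda)$, the smooth part $f(z):=\|B y^*(\lambda)+ [\text{off-support } A\text{-columns}]z-b\|_2^2+\lambda\sum_{i\in\mathcal S}\|y^*(\lambda)_{\mathcal G_i}\|_p^q$ satisfies $df(0)(0)=0$, so $F$ has a local minimum at $z=0$ with a first-order growth condition. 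Combining the quadratic growth in the on-support directions with the first-order growth in the off-support directions shows $x^*(\lambda)$ is a local minimum of $F$ on all of $\R^n$.

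Finally I would extract the quantitative bound. From the stationarity equation, $2B^\top B\,(y^*(\lambda)-\bar y)=-\lambda q\, v(y^*(\lambda))+o(\lambda)$, so $y^*(\lambda)-\bar y=-\tfrac{\lambda q}{2}(B^\top B)^{-1}v(\bar y)+o(\lambda)$ and hence $\|x^*(\lambda)-\bar x\|_2=\|y^*(\lambda)-\bar y\|_2\le \tfrac{\lambda q}{2}\|(B^\top B)^{-1}\|\,\|v(\bar y)\|_2 + o(\lambda)$. It remains to bound $\|v(\bar y)\|_2^2$. For each active group, $\nabla_{y_{\mathcal G_i}}\|y_{\mathcal G_i}\|_p^q = q\|y_{\mathcal G_i}\|_p^{q-p}\,(|y_j|^{p-1}\operatorname{sgn}(y_j))_{j\in\mathcal G_i}$, whose Euclidean norm squared is $q^2\|\bar x_{\mathcal G_i}\|_p^{2(q-p)}\sum_{j\in\mathcal G_i}|\bar x_j|^{2(p-1)}=q^2\|\bar x_{\mathcal G_i}\|_p^{2(q-p)}\|\bar x_{\mathcal G_i}\|_{2p-2}^{2p-2}$; summing over the $S$ active groups and bounding the sum by $S$ times the maximum gives $\|v(\bar y)\|_2^2\le q^2 S\max_{\bar x_{\mathcal G_i}\ne 0}\big(\|\bar x_{\mathcal G_i}\|_p^{2(q-p)}\|\bar x_{\mathcal G_i}\|_{2p-2}^{2p-2}\big)$. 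Absorbing the constant $\tfrac14$ and the $o(\lambda)$ term by shrinking $\kappa$ yields the claimed inequality.

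The main obstacle I expect is not the implicit-function-theorem argument itself but the verification that $x^*(\lambda)$ is a local minimum of the \emph{full} nonconvex problem rather than merely of the restriction to the support subspace — i.e. ruling out descent directions that activate a zero group. This is exactly where the lower-order exponent $q<1$ is essential and where Lemma \ref{meng-h-y} does the work; the delicate point is to glue the first-order growth in the off-support directions to the second-order growth in the on-support directions uniformly for small $\lambda$. A secondary technical nuisance is controlling the $o(\lambda)$ remainder in the expansion of $y^*(\lambda)-\bar y$ so that the stated clean bound (with no lower-order correction) holds on a possibly smaller interval $\lambda<\kappa$.
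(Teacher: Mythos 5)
Your proposal is correct and follows essentially the same route as the paper's proof: an implicit-function-theorem path for the reduced on-support stationarity equation (Jacobian $2B^\top B$ invertible), local minimality of the lifted point obtained from the second-order growth of the reduced objective combined with Lemma \ref{meng-h-y} on the off-support block, and the stationarity identity to produce the $\lambda^2$ bound. The remaining differences are minor matters of execution: the paper handles the bilinear cross term $2\langle B(z-\xi(\lambda)),Dy\rangle$ by applying Lemma \ref{meng-h-y} to $f-2\epsilon_0\|\cdot\|_2^2$ and choosing $\epsilon_0$ with $\sqrt{\epsilon_\lambda\epsilon_0}>\|B\|\,\|D\|$ (your plan of gluing first-order off-support growth to quadratic on-support growth also works after shrinking the on-support neighbourhood so that $2\|B\|\,\|D\|\,\|z-\xi(\lambda)\|_2$ is below the first-order growth constant), and it obtains the stated constant with no $o(\lambda)$ remainder by using the exact identity $\xi(\lambda)-\bar z=-\tfrac{\lambda q}{2}(B^\top B)^{-1}\,v(\xi(\lambda))$ together with the continuity bounds $\|\xi(\lambda)_{\mathcal{G}_i}\|_p^{2(q-p)}\le 2\|\bar z_{\mathcal{G}_i}\|_p^{2(q-p)}$ and $\|\xi(\lambda)_{\mathcal{G}_i}\|_{2p-2}^{2p-2}\le 2\|\bar z_{\mathcal{G}_i}\|_{2p-2}^{2p-2}$, which absorbs the factor $\tfrac14$ exactly as your shrinking of $\kappa$ does.
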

\begin{proof}
Without loss of generality, we let $\bar{x}$ be of structure $\bar{x} = (\bar{z}^\top,0)^\top$ with
\begin{equation*}
\mbox{$\bar{z}=(\bar{x}_{\mathcal{G}_1}^\top,\dots,\bar{x}_{\mathcal{G}_S}^\top )^\top$ and  $\bar{x}_{\mathcal{G}_i} \neq_\mathbf{a} 0$ for $i=1,\dots,S$.}
\end{equation*}
and let $s$ be the sparsity of $\bar{x}$.
Let $A=(B,D)$ with $B$ being the submatrix involving the first $s$ columns of $A$ (corresponding to the active components of $\bar{x}$). By the assumption, we have that $B$ is of full column rank and thus $B^\top B$ is invertible.
In this setting, the linear relation $A\bar{x}=b$ reduces to $B\bar{z}=b$. The proof of this theorem is divided into the three steps:
\begin{enumerate}[(a)]
  \item construct a smooth path from $\bar{x}$ by the implicit function theorem;
  \item validate that every point of the constructed path is a local minimum of \eqref{eq-GSOP}; and
  \item establish the recovery bound for the constructed path.
\end{enumerate}
First, to show (a), we define $H:\R^{s+1}\rightarrow \R^{s}$ by
\[
H(z,\lambda)=2 B^\top (Bz-b)+\lambda q\left(\begin{matrix}
   \|z_{\mathcal{G}_1}\|_p^{q-p}\sigma(z_{\mathcal{G}_1})\\\vdots\\ \|z_{\mathcal{G}_S}\|_p^{q-p}\sigma(z_{\mathcal{G}_S})
\end{matrix}\right),
\]
where $\sigma(z_{\mathcal{G}_i})={\rm vector}\left(|z_j|^{p-1}{\rm sign}(z_j)\right)_{\mathcal{G}_i}$, denoting
a vector consisting of $|z_j|^{p-1}{\rm sign}(z_j)$ for all $j\in \mathcal{G}_i$.
Let $\bar{\delta}>0$ be sufficiently small such that  ${\rm sign}(z)={\rm sign}(\bar{z})$ for each $z\in \mathbf{B}(\bar{z},\bar{\delta})$
and thus $H$ is smooth on $\mathbf{B}(\bar{z},\bar{\delta})\times \R$.
Note that $H(\bar{z},0)=0$ and $\frac{\partial H}{\partial z}(\bar{z},0)=2B^\top B$. By the implicit function theorem \cite{Rudin76}, there  exist some $\kappa>0$, $\delta\in (0,\bar{\delta})$ and a unique  smooth function $\xi:(-\kappa, \kappa)\rightarrow \mathbf{B}(\bar{z},\delta)$ such that
\begin{equation}\label{eq-local-1}
\{(z,\lambda)\in \mathbf{B}(\bar{z},\bar{\delta})\times (-\kappa, \kappa): H(z,\lambda)=0\}=\{(\xi(\lambda),\lambda): \lambda\in (-\kappa, \kappa)\},
\end{equation}
and
\begin{equation}\label{eq-local-2}
\frac{d\xi}{d\lambda}=-q\left(2 B^\top B+\lambda q \left(
\begin{array}{cccc}  M_1 &0 & 0\\ 0&\ddots &0 \\ 0&0 & M_\mathcal{S}\\
\end{array}\right)\right)^{-1} \left(\begin{matrix}
   \|\xi(\lambda)_{\mathcal{G}_1}\|_p^{q-p}\sigma(\xi(\lambda)_{\mathcal{G}_1})\\\vdots\\ \|\xi(\lambda)_{\mathcal{G}_S}\|_p^{q-p}\sigma(\xi(\lambda)_{\mathcal{G}_S})
\end{matrix}\right),
\end{equation}
where $M_i$ for each $i=1,\dots,S$ is denoted by
\begin{equation*}
M_i=(q-p)\|\xi(\lambda)_{\mathcal{G}_i}\|_p^{q-2p}(\sigma(\xi(\lambda)_{\mathcal{G}_i})) (\sigma(\xi(\lambda)_{\mathcal{G}_i}))^\top
+(p-1)\|\xi(\lambda)_{\mathcal{G}_i}\|_p^{q-p}{\rm diag}\left(|\xi(\lambda)_j|^{p-2}\right),
\end{equation*}
and ${\rm diag}\left(|\xi(\lambda)_j|^{p-2}\right)$ is a diagonal matrix generated by ${\rm vector}\left(|\xi(\lambda)_j|^{p-2}\right)$.
Thus, due to \eqref{eq-local-1} and \eqref{eq-local-2},
we have constructed a smooth path $\xi(\lambda)$ near $\bar{z}$, $\lambda\in (-\kappa, \kappa)$, be such that
\begin{equation}\label{eq-local-3}
2 B^\top (B\xi(\lambda)-b)+\lambda q\left(\begin{matrix}
\|\xi(\lambda)_{\mathcal{G}_1}\|_p^{q-p}\sigma(\xi(\lambda)_{\mathcal{G}_1})\\\vdots\\ \|\xi(\lambda)_{\mathcal{G}_S}\|_p^{q-p}
\sigma(\xi(\lambda)_{\mathcal{G}_S})
\end{matrix}\right)=0,
\end{equation}
and that
\begin{equation}\label{eq-local-4}
2 B^\top B+\lambda q \left(
\begin{array}{cccc}  M_1 &0 & 0\\ 0&\ddots &0 \\ 0&0 & M_\mathcal{S}\\
\end{array}\right)\succ 0.
\end{equation}

For fixed $\lambda \in (-\kappa,\kappa)$, let $x^*(\lambda):=(\xi(\lambda)^\top,0)^\top$.
To verify (b), we prove that such $x^*(\lambda)$, with $\xi(\lambda)$ satisfying \eqref{eq-local-3} and \eqref{eq-local-4}, is a local minimum of problem \eqref{eq-GSOP}.
Let $h(z):=\|Bz-b\|_2^2+\lambda\|z\|_{p,q}^q$. Note that $h(\xi(\lambda))=\|Ax^*(\lambda)-b\|_2^2+\lambda\|x^*(\lambda)\|_{p,q}^q$ and that $h$ is smooth around $\xi(\lambda)$. By noting that $\xi(\lambda)$ satisfies \eqref{eq-local-3} and \eqref{eq-local-4} (the first- and second- derivative of $h$ at $\xi(\lambda)$), one has that $h$ satisfies the second-order growth condition at $\xi(\lambda)$, that is, there exist some $\epsilon_\lambda >0$ and $\delta_\lambda >0$ such that
\begin{equation}\label{eq-local-0a}
h(z)\geq h(\xi(\lambda))+2\epsilon_\lambda\|z-\xi(\lambda)\|_2^2\quad \forall z\in \mathbf{B}(\xi(\lambda),\delta_\lambda).
\end{equation}
In what follows, let $\epsilon_\lambda >0$ and $\delta_\lambda >0$ be given as above, and select $\epsilon_0>0$ such that
\begin{equation}\label{eq-local-0b}
\sqrt{\epsilon_\lambda \epsilon_0}-\|B\|\|D\|>0.
\end{equation}
According to Lemma \ref{meng-h-y} (with $\|D\cdot\|_2^2+2\langle B\xi(\lambda)-b,  D\cdot \rangle-2\epsilon_0\|\cdot\|_2^2$ in place of $f$),  there exists some $\delta_0>0$ such that
\begin{equation}\label{eq-local-0c}
\|Dy\|_2^2+2\langle B\xi(\lambda)-b,  Dy \rangle-2\epsilon_0\|y\|_2^2+\lambda\|y\|_{p,q}^p\geq 0\quad\forall y\in \mathbf{B}(0,\delta_0).
\end{equation}
Thus, for each $x:=(z,y)\in \mathbf{B}(\xi(\lambda),\delta_\lambda)\times \mathbf{B}(0,\delta_0)$,  it  follows that
\begin{equation*}\label{eq-local-0d}
\begin{array}{ll}
\|Ax-b\|_2^2+\lambda\|x\|_{p,q}^q
&=\|Bz-b+Dy\|_2^2+\lambda\|z\|_{p,q}^q+\lambda\|y\|_{p,q}^q\\
&=\|Bz-b\|_2^2+\lambda\|z\|_{p,q}^q+\|Dy\|_2^2+2\langle Bz-b,  Dy\rangle+\lambda\|y\|_{p,q}^q\\
&=h(z)+\|Dy\|_2^2+2\langle B\xi(\lambda)-b,  Dy \rangle+\lambda\|y\|_{p,q}^q+2\langle B(z-\xi(\lambda)),  Dy\rangle.
\end{array}
\end{equation*}
By \eqref{eq-local-0a} and \eqref{eq-local-0c}, it yields that
\[
\begin{array}{ll}
\|Ax-b\|_2^2+\lambda\|x\|_{p,q}^q
&\ge h(\xi(\lambda))+2\epsilon_\lambda\|z-\xi(\lambda)\|_2^2+2\epsilon_0\|y\|_2^2+2\langle B(z-\xi(\lambda)),  Dy\rangle\\
&\ge h(\xi(\lambda))+4\sqrt{\epsilon_\lambda \epsilon_0}\|z-\xi(\lambda)\|_2\|y\|_2-2\|B\| \|D\| \|z-\xi(\lambda)\|_2  \|y\|_2\\
&= \|Ax^*(\lambda)-b\|_2^2+\lambda\|x^*(\lambda)\|_{p,q}^q+2(2\sqrt{\epsilon_\lambda \epsilon_0}-\|B\| \|D\|)\|z-\xi(\lambda)\|_2\|y\|_2\\
&\geq \|Ax^*(\lambda)-b\|_2^2+\lambda\|x^*(\lambda)\|_{p,q}^q,
\end{array}
\]
where the last inequality follows from \eqref{eq-local-0b}.
Hence $x^*(\lambda)$ is a local minimum of problem \eqref{eq-GSOP}, and (b) is verified.

Finally, we check (c) by providing an upper bound on the distance from $\xi(\lambda)$ to $\bar{z}$.
By \eqref{eq-local-3}, one has that
\begin{equation}\label{eq-local-3a}
\xi(\lambda)-\bar{z}=-\frac{\lambda q}{2} ((B^\top B)^{-1})
\left(\begin{matrix}
\|\xi(\lambda)_{\mathcal{G}_1}\|_p^{q-p}\sigma(\xi(\lambda)_{\mathcal{G}_1})\\\vdots\\ \|\xi(\lambda)_{\mathcal{G}_S}\|_p^{q-p}\sigma(\xi(\lambda)_{\mathcal{G}_S})
\end{matrix}\right).
\end{equation}
As $\{\xi(\lambda):\lambda\in (-\kappa,\kappa)\}\subseteq \mathbf{B}(\bar{z},\bar{\delta})$, without loss of generality,
we assume for each $\lambda< \kappa$ that
\[
\|\xi(\lambda)_{\mathcal{G}_i}\|_p^{2(q-p)} \le 2\|\bar{z}_{\mathcal{G}_i}\|_p^{2(q-p)}\quad
{\rm and}\quad \|\xi(\lambda)_{\mathcal{G}_i}\|_{2p-2}^{2p-2}\le 2\|\bar{z}_{\mathcal{G}_i}\|_{2p-2}^{2p-2}
\quad \forall i=1,\dots,S
\]
(otherwise, we can choose a smaller $\bar{\delta}$).
Recall $\sigma(\xi(\lambda)_{\mathcal{G}_i})={\rm vector}\left(|\xi(\lambda)_j|^{p-1}{\rm sign}(\xi(\lambda)_j)\right)_{\mathcal{G}_i}$. We obtain from \eqref{eq-local-3a} that
\begin{equation*}\label{eq-local-6}
\begin{array}{lll}
\|\xi(\lambda)-\bar{z}\|_2^2&\le \frac{\lambda^2 q^2}{4} \|(B^\top B)^{-1}\|^2 \sum_{i=1}^S
\left(\|\xi(\lambda)_{\mathcal{G}_i}\|_p^{2(q-p)}\sum_{j\in \mathcal{G}_i} |\xi(\lambda)_j|^{2p-2}\right)\\
&= \frac{\lambda^2 q^2}{4} \|(B^\top B)^{-1}\|^2 \sum_{i=1}^S
\left(\|\xi(\lambda)_{\mathcal{G}_i}\|_p^{2(q-p)}\|\xi(\lambda)_{\mathcal{G}_i}\|_{2p-2}^{2p-2}\right)\\
&\le \frac{\lambda^2 q^2}{4} \|(B^\top B)^{-1}\|^2 S \max\limits_{i=1,\dots,S} \left(\|\xi(\lambda)_{\mathcal{G}_i}\|_p^{2(q-p)}\|\xi(\lambda)_{\mathcal{G}_i}\|_{2p-2}^{2p-2}\right)\\
&\le \lambda^2q^2 S \|(B^\top B)^{-1}\|^2\max\limits_{i=1,\dots,S} \left(\|\bar{z}_{\mathcal{G}_i}\|_p^{2(q-p)}\|\bar{z}_{\mathcal{G}_i}\|_{2p-2}^{2p-2}\right).
\end{array}
\end{equation*}
Hence we arrive at that for each $\lambda< \kappa$
\[\|x^*(\lambda)-\bar{x}\|_2^2=\|\xi(\lambda)-\bar{z}\|_2^2\le \lambda^2q^2 S \|(B^\top B)^{-1}\|^2\max_{\bar{x}_{\mathcal{G}_i}\neq 0} \left(\|\bar{x}_{\mathcal{G}_i}\|_p^{2(q-p)}\|\bar{x}_{\mathcal{G}_i}\|_{2p-2}^{2p-2}\right),\]
and the proof is complete.
\end{proof}

Theorem \ref{thm-RB-local} is a significant result in that it provides the uniform local recovery bound for all the $\ell_{p,q}$ regularization problems ($0<q<1$), which is
\[
\|x^*_{p,q}(\lambda)-\bar{x}\|_2^2\le O\left(\lambda^2S\right),
\]
where $x^*_{p,q}(\lambda)$ is a local optimal solution of \eqref{eq-GSOP} (near $\bar{x}$).
This bound improves the global recovery bound given in Theorem \ref{thm-RecoverBound} (of order $\lambda^{\frac{2}{2-q}}$) and
shares the same one with the $\ell_{p,1}$ regularization problem (group Lasso); see Corollary \ref{coro-RB-1}. It is worth noting that when $q=1$ our proof technique is not working as Lemma \ref{meng-h-y} is false in this case.

\section{Proximal gradient method for group sparse optimization}

Many efficient algorithms have been proposed to solve the sparse optimization problem,
and one of the most popular and practical algorithms is the proximal gradient method; see \cite{BeckTeboulle09,Combettes05,Nesterov13,TZhang13} and references therein. It was reported in \cite{BeckTeboulle09,Combettes05,Nesterov13} that the proximal gradient method for solving the $\ell_1$ regularization problem reduces to the well-known iterative soft thresholding algorithm and that the accelerated proximal gradient methods  proposed in \cite{BeckTeboulle09,Nesterov13} have the convergence rate $O(1/k^2)$. Recently, the convergence theory of the proximal gradient method for solving the nonconvex regularization problem was studied under the framework of the Kurdyka-{\L}ojasiewicz theory \cite{AttouchBolte10,BolteTeboulle13},  the majorization-minimization (MM) scheme \cite{Mairal13}, the coordinate gradient descent method \cite{TsengPaul09} and the successive upper-bound minimization approach \cite{RazaviyaynHong13}.

In this section, we apply the proximal gradient method
to solve the group sparse optimization (PGM-GSO) via $\ell_{p,q}$ ($p\ge 1,q\ge 0$) regularization \eqref{eq-GSOP}, which is stated as follows.

\noindent \vskip 0.3cm

\N \textbf{PGM-GSO}\\
Select a stepsize $v$, start with an initial point $x_0\in \R^n$, and generate a sequence $\{x^k\}\subseteq \R^n$ via the iteration
\begin{eqnarray}
z^k&=&x^k-2v A^\top  (Ax^k-b),  \\
x^{k+1}&\in&{\rm Arg}\min_{x\in \R^n}\left\{\lambda \|x\|_{p,q}^q+\frac{1}{2v}\|x-z^k\|_2^2\right\}. \label{eq-PPA-Lp,q}
\end{eqnarray}


We will obtain the analytical solutions of \eqref{eq-PPA-Lp,q} for some specific $p$ and $q$, and the linear convergence rate of the PGM-GSO.
The convergence theory of the PGM-GSO falls in the framework of the Kurdyka-{\L}ojasiewicz theory; see \cite{AttouchBolte10,BolteTeboulle13}. In particular, following from \cite[Theorem 1 and  Proposition 3]{BolteTeboulle13}, the sequence generated by the PGM-GSO converges to a critical point, especially a global minimum when $q\ge 1$ and  a local minimum when $q=0$ (inspired by the idea in \cite{Blumensath08}), as summarized as follows.

\begin{theorem}\label{thm-GP-SO-2}
Let $p\ge 1$. Suppose that the sequence $\{x^k\}$ is generated by the {\rm PGM-GSO} with
$v< \frac{1}{2}\|A\|_2^{-2}$. Then the following statements hold:
\begin{enumerate}[{\rm (i)}]
\item if $q\ge 1$, then $\{x^k\}$ converges to a global minimum of problem \eqref{eq-GSOP},
\item if $q=0$, then $\{x^k\}$ converges to a local minimum of problem \eqref{eq-GSOP}, and
\item if $0<q<1$, then $\{x^k\}$ converges to a critical point\footnote{A point $x$ is said to be a critical point of $F$ if 0 belongs to its limiting subdifferential at $x$ \cite{Mordukhovich-VA}.} of problem \eqref{eq-GSOP}.
\end{enumerate}
\end{theorem}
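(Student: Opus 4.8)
The plan is to verify that the PGM-GSO iteration is an instance of the proximal forward-backward splitting scheme analyzed in the Kurdyka--{\L}ojasiewicz framework of \cite{AttouchBolte10,BolteTeboulle13}, and then to invoke their convergence results after checking the required hypotheses. Write $F = g + \lambda\varphi$ where $g(x):=\|Ax-b\|_2^2$ is the smooth data-fidelity term and $\varphi:=\|\cdot\|_{p,q}^q$. The gradient $\nabla g(x)=2A^\top(Ax-b)$ is Lipschitz with constant $L=2\|A\|_2^2$, so the stepsize condition $v<\tfrac12\|A\|_2^{-2}=1/L$ is exactly the condition $v<1/L$ required for the descent/convergence analysis of proximal gradient on $C^{1,1}+\text{(proper lsc)}$ objectives. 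First I would observe that $z^k = x^k - v\nabla g(x^k)$ and that \eqref{eq-PPA-Lp,q} is precisely $x^{k+1}\in\operatorname{prox}_{v\lambda\varphi}(z^k)$, so $\{x^k\}$ is a sequence produced by the algorithm ``PALM/forward-backward'' covered by \cite[Theorem 1]{BolteTeboulle13}.

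Next I would check the three structural hypotheses needed to apply that theorem: (1) $F$ is bounded below (true: $F\ge 0$) and coercive for $q>0$, or at least has bounded level sets, which guarantees the generated sequence is bounded; (2) $F$ is a Kurdyka--{\L}ojasiewicz function --- this holds because $g$ is polynomial and $\varphi$, being a finite sum of terms $\|x_{\mathcal{G}_i}\|_p^q$, is semialgebraic when $p,q$ are rational and, more generally, definable in an o-minimal structure, hence $F$ is a KL function (this is the point where one cites the standard fact that semialgebraic/subanalytic functions satisfy the KL inequality); (3) $\varphi$ is proper and lower semicontinuous, which is immediate. With these in hand, \cite[Theorem 1]{BolteTeboulle13} yields that $\{x^k\}$ has finite length and converges to a single point $x^*$ which is a critical point of $F$, i.e. $0\in\partial F(x^*)$ in the limiting-subdifferential sense. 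This already proves part (iii).

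For parts (i) and (ii) I would refine the identification of the limit. When $q\ge 1$, $\varphi$ is convex (each $\|x_{\mathcal{G}_i}\|_p^q$ is a convex function of $x_{\mathcal{G}_i}$ for $p\ge1,q\ge1$), hence $F$ is convex, and a critical point of a convex function is a global minimizer; this gives (i). Alternatively one cites \cite[Proposition 3]{BolteTeboulle13} directly. When $q=0$, $\varphi=\|\cdot\|_{p,0}$ counts nonzero groups, and the proximal step \eqref{eq-PPA-Lp,q} is a group hard-thresholding operator; following the argument of \cite{Blumensath08}, one shows that once the support stabilizes (which it does, since the sequence converges and the support is lower semicontinuous along the iteration for hard thresholding), the limit $x^*$ minimizes $F$ restricted to that fixed group-support, and a short perturbation argument on the complementary groups --- any reactivated group would increase the fidelity term by a controlled amount while paying the full penalty $\lambda$, for $\lambda$ fixed --- shows $x^*$ is a local minimum.

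The main obstacle I anticipate is item (3)/(2): cleanly justifying that $F$ is a KL function for \emph{all} admissible $p\ge1,q\ge0$, not merely rational ones. For irrational exponents $\|x_{\mathcal{G}_i}\|_p^q$ is no longer semialgebraic, so one must appeal to o-minimality (the structure $\mathbb{R}_{\mathrm{an},\exp}$, in which $t\mapsto t^q$ for $t>0$ and real $q$ is definable) and check definability piecewise near points where some $x_{\mathcal{G}_i}=0$; this is routine but is the only place where real care is needed. A secondary subtlety is the $q=0$ case, where $\varphi$ is not continuous, so the abstract convergence theorem must be applied in a form that only requires lower semicontinuity (which \cite{BolteTeboulle13} does provide), and the ``local minimum'' conclusion requires the extra support-stabilization argument sketched above rather than following formally from KL theory alone.
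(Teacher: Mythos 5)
Your proposal is correct and follows essentially the same route as the paper, which gives no detailed proof but simply notes that PGM-GSO falls within the Kurdyka--{\L}ojasiewicz framework and invokes \cite[Theorem 1 and Proposition 3]{BolteTeboulle13} for convergence to a critical point (and to a global minimum when $q\ge 1$), with the $q=0$ local-minimum conclusion credited to the idea of \cite{Blumensath08}. Your write-up in fact supplies more detail than the paper records (the KL/definability verification and the support-stabilization sketch for $q=0$), but the decomposition $F=g+\lambda\varphi$, the stepsize check $v<1/L$, and the citations used are exactly those underlying the paper's argument.
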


\subsection{Analytical solutions of \eqref{eq-PPA-Lp,q}}

Since the main computation of the proximal gradient method is the proximal step \eqref{eq-PPA-Lp,q},
it is significant to investigate the solutions of \eqref{eq-PPA-Lp,q} for the specific applications.
Note that $\|x\|_{p,q}^q$ and $\|x-z^k\|_2^2$ are both grouped separable.
Then the proximal step \eqref{eq-PPA-Lp,q} can be achieved parallelly in each group, and is equivalent to solve
a cycle of low dimensional proximal optimization subproblems
\begin{equation}\label{eq-PPA-Lp,q-subproblem}
x^{k+1}_{\mathcal{G}_i}\in{\rm Arg}\min_{x\in \R^{n_i}}\{\lambda  \|x_{\mathcal{G}_i}\|_p^q+\frac{1}{2v}\|x_{\mathcal{G}_i}-z^k_{\mathcal{G}_i}\|_2^2\}, ~{\rm for}~ i=1,\cdots,r.
\end{equation}
When $p$ and $q$ are given as some specific numbers, such as $p=1,2$ and $q=0,1/2,2/3,1$, the solution of subproblem \eqref{eq-PPA-Lp,q-subproblem} of each group can be given explicitly by an analytical formula, as shown in the following proposition.
\begin{proposition}\label{thm-Lp,q-formula}
Let $z\in \R^l$, $v> 0$ and the proximal regularization be $Q_{p,q}(x):=\lambda \|x\|_p^q+\frac{1}{2v}\|x-z\|_2^2$. Then the proximal operator
\begin{equation*}\label{eq-GSOP-p,q-proximal}
P_{p,q}(z)\in {\rm Arg}\min_{x\in \R^l}\left\{Q_{p,q}(x)\right\}
\end{equation*}
has the following analytical formula:
\begin{enumerate}[{\rm (i)}]
  \item if $p=2$ and $q=1$, then
  \begin{equation}\label{eq-GSOP-2,1-solution}
P_{2,1}(z)=\left\{\begin{matrix}
   \left(1-\frac{v\lambda}{\|z\|_2}\right)z,&{\|z\|_2>v\lambda,}\\
   0, &{\rm otherwise,}
\end{matrix}\right.
\end{equation}
  \item if $p=2$ and $q=0$, then
  \begin{equation}\label{eq-GSOP-2,0-solution}
P_{p,0}(z)=\left\{\begin{matrix}
   z,&{\|z\|_2>\sqrt{2v\lambda},}\\
   0~ {\rm or}~ z, &{\|z\|_2=\sqrt{2v\lambda},}\\
   0, &{\|z\|_2<\sqrt{2v\lambda},}
\end{matrix}\right.
\end{equation}
  \item if $p=2$ and $q=1/2$, then
  \begin{equation}\label{eq-GSOP-2,1/2-solution}
P_{2,1/2}(z)=\left\{\begin{matrix}
   \frac{16 \|z\|_2^{3/2} \cos^3\left(\frac{\pi}{3}-\frac{\psi(z)}{3}\right)}{3\sqrt{3}v\lambda +16 \|z\|_2^{3/2} \cos^3\left(\frac{\pi}{3}-\frac{\psi(z)}{3}\right)}z,&{\|z\|_2>\frac{3}{2}(v\lambda)^{2/3},}\\
   0~{\rm or}~\frac{16 \|z\|_2^{3/2} \cos^3\left(\frac{\pi}{3}-\frac{\psi(z)}{3}\right)}{3\sqrt{3}v\lambda +16 \|z\|_2^{3/2} \cos^3\left(\frac{\pi}{3}-\frac{\psi(z)}{3}\right)}z,&{\|z\|_2=\frac{3}{2}(v\lambda)^{2/3},}\\
   0, &{\|z\|_2<\frac{3}{2}(v\lambda)^{2/3},}
\end{matrix}\right.
\end{equation}
with
\begin{equation}\label{eq-GSOP-2,1/2-solution-2}
\psi(z)=\arccos \left(\frac{v\lambda}{4}\left(\frac{3}{\|z\|_2}\right)^{3/2}\right),
\end{equation}
  \item if $p=1$ and $q=1/2$, then
  \begin{equation}\label{eq-GSOP-1,1/2-solution}
P_{1,1/2}(z)=\left\{\begin{matrix}
   \tilde{z},&{Q_{1,1/2}(\tilde{z})<Q_{1,1/2}(0),}\\
   0~{\rm or}~\tilde{z},&{Q_{1,1/2}(\tilde{z})=Q_{1,1/2}(0),}\\
   0, &{Q_{1,1/2}(\tilde{z})>Q_{1,1/2}(0),}
\end{matrix}\right.
\end{equation}
with
\begin{equation*}\label{eq-GSOP-1,1/2-solution-2}
\tilde{z}=z-\frac{\sqrt{3}v\lambda}{4 \sqrt{\|z\|_1} \cos\left(\frac{\pi}{3}-\frac{\xi(z)}{3}\right)}{\rm sign}(z),\quad \xi(z)=\arccos \left(\frac{v\lambda l}{4}\left(\frac{3}{\|z\|_1}\right)^{3/2}\right),
\end{equation*}
  \item if $p=2$ and $q=2/3$, then
  \begin{equation}\label{eq-GSOP-2,2/3-solution}
P_{2,2/3}(z)=\left\{\begin{matrix}
   \frac{3\left(a^{3/2}+\sqrt{2\|z\|_2-a^3}\right)}{32v\lambda a^2+3\left(a^{3/2}+\sqrt{2\|z\|_2-a^3}\right)}z,&{\|z\|_2>2\left(\frac23v\lambda\right)^{3/4},}\\
   0~{\rm or}~\frac{3\left(a^{3/2}+\sqrt{2\|z\|_2-a^3}\right)}{32v\lambda a^2+3\left(a^{3/2}+\sqrt{2\|z\|_2-a^3}\right)}z,&{\|z\|_2=2\left(\frac23v\lambda\right)^{3/4},}\\
   0, &{\|z\|_2<2\left(\frac23v\lambda\right)^{3/4},}
\end{matrix}\right.
\end{equation}
with
\begin{equation}\label{eq-GSOP-2,2/3-solution-2}
a=\frac{2}{\sqrt{3}}(2v\lambda)^{1/4}\left(\cosh\left(\frac{\varphi(z)}{3}\right)\right)^{1/2},\quad \varphi(z)={\rm arccosh} \left( \frac{27\|z\|_2^2}{16(2v\lambda)^{3/2}}\right),
\end{equation}
  \item if $p=1$ and $q=2/3$, then
  \begin{equation}\label{eq-GSOP-1,2/3-solution}
P_{1,2/3}(z)=\left\{\begin{matrix}
   \bar{z},&{Q_{1,2/3}(\bar{z})<Q_{1,2/3}(0),}\\
   0~{\rm or}~\bar{z},&{Q_{1,2/3}(\bar{z})=Q_{1,2/3}(0),}\\
   0, &{Q_{1,2/3}(\bar{z})>Q_{1,2/3}(0),}
\end{matrix}\right.
\end{equation}
with
\begin{equation*}\label{eq-GSOP-1,2/3-solution-2}
\bar{z}=z-\frac{4v\lambda\bar{a}^{1/2}}{3\left(\bar{a}^{3/2}+\sqrt{2\|z\|_1-\bar{a}^3}\right)}{\rm sign}(z),
\end{equation*}
and
\begin{equation*}\label{eq-GSOP-1,2/3-solution-3}
\bar{a}=\frac{2}{\sqrt{3}}(2v\lambda l)^{1/4}\left(\cosh\left(\frac{\zeta(z)}{3}\right)\right)^{1/2},
\quad \zeta(z)={\rm arccosh} \left( \frac{27\|z\|_1^2}{16(2v\lambda l)^{3/2}}\right).
\end{equation*}
\end{enumerate}
\end{proposition}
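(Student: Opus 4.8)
The plan is to reduce each case to a one-dimensional optimization problem and then solve it explicitly. The key structural observation is that $Q_{p,q}(x)=\lambda\|x\|_p^q+\frac1{2v}\|x-z\|_2^2$ is, for fixed $\|x\|_p$, minimized by choosing $x$ as close to $z$ as possible in the $\ell_2$ sense; for $p=2$ this forces $x$ to be a nonnegative multiple of $z$, so writing $x=tz$ with $t\ge 0$ turns the problem into $\min_{t\ge 0}\ \lambda\|z\|_2^q t^q+\frac1{2v}\|z\|_2^2(1-t)^2$, a scalar problem. For $p=1$ the separability is even simpler: $Q_{1,q}$ decouples coordinatewise up to the shared factor $\|x\|_1$, and one shows the optimal $x$ has the form $x_j=\mathrm{sign}(z_j)\,(|z_j|-c)_+$ for a common shrinkage amount $c\ge 0$ determined by the first-order condition, so again a single scalar unknown remains. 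Thus every case collapses to analyzing $g(t):=\alpha t^q+\beta(1-t)^2$ (or its $\ell_1$ analogue) on $[0,\infty)$, comparing the interior critical point(s) with the boundary value $g(0)$.

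First I would treat the two "easy" cases $q=1$ and $q=0$. For $q=1$, $g(t)=\alpha t+\beta(1-t)^2$ is a strictly convex quadratic with unconstrained minimizer $t^*=1-\alpha/(2\beta)$; clipping at $0$ and substituting $\alpha=\lambda\|z\|_2$, $\beta=\frac1{2v}\|z\|_2^2$ gives exactly the soft-thresholding formula \eqref{eq-GSOP-2,1-solution}. For $q=0$, $\|x\|_p^0$ is $1$ if $x\ne 0$ and $0$ if $x=0$, so the optimum is either $x=z$ (value $\lambda$) or $x=0$ (value $\frac1{2v}\|z\|_2^2$); comparing these two numbers yields \eqref{eq-GSOP-2,0-solution}, with the tie at $\|z\|_2=\sqrt{2v\lambda}$.

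Next I would handle $q=1/2$ and $q=2/3$, which is where the real work lies. Setting $g'(t)=0$ for $g(t)=\alpha t^q+\beta(1-t)^2$ produces a polynomial equation in $t$ (or, after the substitution $u=t^{1/2}$ type changes, a cubic), and the explicit closed forms in \eqref{eq-GSOP-2,1/2-solution}--\eqref{eq-GSOP-2,1/2-solution-2} and \eqref{eq-GSOP-2,2/3-solution}--\eqref{eq-GSOP-2,2/3-solution-2} come from Cardano's trigonometric/hyperbolic solution of that cubic: the trigonometric form with $\cos$ and $\arccos$ appears in the "three real roots" (casus irreducibilis) regime, and the hyperbolic form with $\cosh$ and $\mathrm{arccosh}$ in the single-real-root regime. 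The threshold on $\|z\|_2$ (e.g. $\frac32(v\lambda)^{2/3}$ for $q=1/2$, $2(\frac23 v\lambda)^{3/4}$ for $q=2/3$) is precisely the value of $\|z\|_2$ at which the relevant discriminant changes sign, equivalently the value below which the only nonnegative stationary point fails to beat $g(0)$. For $p=1$, after the coordinatewise shrinkage reduction the same cubic governs the common shrinkage amount $c$ in terms of $\|z\|_1$ (with an extra factor of $l$, the block length, entering through $\partial\|x\|_1/\partial x_j$), giving $\tilde z$ and $\bar z$; since here $Q_{1,q}(\tilde z)$ is not automatically below $Q_{1,q}(0)$ one must compare the two values directly, which is why \eqref{eq-GSOP-1,1/2-solution} and \eqref{eq-GSOP-1,2/3-solution} are phrased as a case split on $Q_{1,q}(\tilde z)$ versus $Q_{1,q}(0)$.

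The main obstacle is the bookkeeping in the $q=1/2,2/3$ cases: one must verify that the claimed trigonometric/hyperbolic expression is indeed a root of the stationarity cubic, that it is the correct root (nonnegative, and the global rather than a spurious local minimizer of $g$ on $[0,\infty)$), and that the stated threshold on $\|z\|_p$ is exactly where the global minimizer switches from the interior point to $0$. I would organize this by (i) deriving the cubic, (ii) computing its discriminant to locate the threshold, (iii) applying Cardano's formula in the appropriate ($\cos$ or $\cosh$) form, and (iv) checking second-order/endpoint conditions to confirm global optimality and to pin down the behavior exactly at the threshold, where both $0$ and the interior point are optimal. The $p=1$ subcases reuse (i)--(iii) verbatim after the soft-thresholding reduction, with the value comparison replacing step (iv).
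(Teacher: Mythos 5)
Your overall route---reducing to a scalar problem (for $p=2$ via $x=tz$, $t\ge 0$, which the paper obtains instead from the proportionality forced by the first-order condition), solving the resulting cubic/quartic stationarity equation by the trigonometric or hyperbolic form of Cardano's formula, and comparing against the value at $0$---is essentially the paper's proof. The genuine gap is in how you locate the switching threshold. You assert that the stated threshold (e.g.\ $\tfrac32(v\lambda)^{2/3}$ for $(p,q)=(2,1/2)$) is ``precisely the value of $\|z\|_2$ at which the relevant discriminant changes sign, equivalently the value below which the only nonnegative stationary point fails to beat $g(0)$,'' and step (ii) of your plan locates the threshold by that discriminant. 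These two values are not equal. For $(p,q)=(2,1/2)$ the stationarity cubic $\eta^3-\|z\|_2\eta+\tfrac12 v\lambda=0$ (with $\eta=\|\tilde x\|_2^{1/2}$) has a positive root whenever $\|z\|_2\ge 3(v\lambda/4)^{2/3}\approx 1.19\,(v\lambda)^{2/3}$, yet the proximal operator is still $0$ on the whole interval $3(v\lambda/4)^{2/3}\le\|z\|_2<\tfrac32(v\lambda)^{2/3}$, because there the interior stationary point satisfies $Q_{2,1/2}(\tilde x)>Q_{2,1/2}(0)$; similarly for $q=2/3$ the real-root threshold $4(\tfrac29 v\lambda)^{3/4}$ lies strictly below the stated value $2(\tfrac23 v\lambda)^{3/4}$. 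The paper pins down the correct constant by a separate argument: it shows the gap function $H(\|z\|_2):=\tfrac{v}{\|\tilde x\|_2}\bigl(Q(0)-Q(\tilde x)\bigr)$ is strictly increasing beyond the discriminant point and solves $H=0$, which gives $\|z\|_2\,\|\tilde x\|_2^{1/2}=\tfrac32 v\lambda$ and hence $\|z\|_2=\tfrac32(v\lambda)^{2/3}$ (and the analogue for $q=2/3$). Your step (iv) gestures at an endpoint comparison, but as written your plan would place the switch at the wrong value; the explicit value comparison (or an equivalent monotonicity argument) is not optional bookkeeping, it is the step that produces the constants in the statement.

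A smaller point on the $p=1$ cases: your reduction to $x_j=\mathrm{sign}(z_j)\,(|z_j|-c)_+$ with a single cubic for $c$ expressed through $\|z\|_1$ and $l$ is consistent only in the regime where the shrinkage zeroes out no coordinate (the positive part inactive); once clipping occurs, the stationarity equation involves the surviving coordinates rather than $\|z\|_1$ and $l$, and moreover $\|\cdot\|_1^q$ is nonsmooth at every point having a zero coordinate, so such points are additional candidates beyond $0$ and the fully supported stationary point. The paper's own treatment (it only details the $p=2$ cases and says the rest follow the same routine, with the formula for $\tilde z$ written without clipping) implicitly works in that unclipped regime; if you adopt the clipped form you must either restrict to the same regime or handle the extra cases explicitly.
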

\begin{proof}
Since the proximal regularization $Q_{p,q}(x):=\lambda \|x\|_p^q+\frac{1}{2v}\|x-z\|_2^2$ is non-differentiable only at $0$, $P_{p,q}(z)$ must be 0 or some point $\tilde{x}(\neq 0)$ satisfying the first-order condition
\begin{equation}\label{eq-FOC}
\lambda q \|\tilde{x}\|_p^{q-p}\left(\begin{matrix}
|\tilde{x}_1|^{p-1} {\rm sign}(\tilde{x}_1)\\\vdots\\ |\tilde{x}_l|^{p-1} {\rm sign}(\tilde{x}_l)
\end{matrix}\right)+\frac{1}{v}(\tilde{x}-z)=0.
\end{equation}
Thus, to derive the analytical formula of the proximal operator $P_{p,q}(z)$, we just need to calculate such $\tilde{x}$ via \eqref{eq-FOC}, and then compare the objective function values $Q_{p,q}(\tilde{x})$ and $Q_{p,q}(0)$ to obtain the solution inducing the smaller value.
The proofs of the six statements follow in the above routine, and we only provide the detailed proofs of (iii) and (v) as samples.
\begin{enumerate}
  \item[(iii)] When $p=2$ and $q=1/2$, \eqref{eq-FOC} reduces to
\begin{equation}\label{eq-optimalityPO-2,1/2}
\frac{\lambda \tilde{x}}{2\|\tilde{x}\|_2^{3/2}}+\frac{1}{v}(\tilde{x}-z)=0,
\end{equation}
and consequently,
\begin{equation}\label{eq-optimalityPO2-2,1/2}
\|\tilde{x}\|_2^{3/2}-\|z\|_2\|\tilde{x}\|_2^{1/2}+\frac{1}{2}v\lambda=0.
\end{equation}
Denote $\eta=\|\tilde{x}\|_2^{1/2}> 0$. The equation \eqref{eq-optimalityPO2-2,1/2} can be transformed into the following cubic algebraic equation
\begin{equation}\label{eq-GSOP-2,1/2-CubicAE}
\eta^3-\|z\|_2\eta+\frac{1}{2}v\lambda=0.
\end{equation}
Due to the hyperbolic solution of the cubic equation (see \cite{Short37}), by denoting
\[
r=2\sqrt{\frac{\|z\|_2}{3}},~~\alpha=\arccos \left(\frac{v\lambda}{4}\left(\frac{3}{\|z\|_2}\right)^{3/2}\right)(:=\psi(z))~~{\rm and}~~\beta={\rm arccosh}\left(-\frac{v\lambda}{4}\left(\frac{3}{\|z\|_2}\right)^{3/2}\right),
\]
the solution of \eqref{eq-GSOP-2,1/2-CubicAE} can be expressed as the follows.
\begin{enumerate}[(1)]
\item If $0\le \|z\|_2\le 3\left(\frac{v\lambda}{4}\right)^{2/3}$, then the three roots of \eqref{eq-GSOP-2,1/2-CubicAE} are given by
\begin{equation*}
\eta_1=r\cosh \frac{\beta}{3},~\eta_2=-\frac{r}{2}\cosh \frac{\beta}{3}+i \frac{\sqrt{3}r}{2}\sinh \frac{\beta}{3},~
\eta_3=-\frac{r}{2}\cosh \frac{\beta}{3}-i \frac{\sqrt{3}r}{2}\sinh \frac{\beta}{3},
\end{equation*}
where $i$ denotes the imaginary unit.
However, this $\beta$ does not exist since the value of hyperbolic cosine must be positive.
Thus, in this case, $P_{2,1/2}(z)=0$.
\item If $\|z\|_2> 3\left(\frac{v\lambda}{4}\right)^{2/3}$, then the three roots of \eqref{eq-GSOP-2,1/2-CubicAE} are
\begin{equation*}
\eta_1=r\cos \left(\frac{\pi}{3}-\frac{\alpha}{3}\right),~\eta_2=-r\sin \left(\frac{\pi}{2}-\frac{\alpha}{3}\right),~
\eta_3=-r\cos \left(\frac{2\pi}{3}-\frac{\alpha}{3}\right).
\end{equation*}
The unique positive solution of \eqref{eq-GSOP-2,1/2-CubicAE} is $\|\tilde{x}\|_2^{1/2}=\eta_1$, and thus, the unique solution  of \eqref{eq-optimalityPO-2,1/2} is given by
\begin{equation*}
\tilde{x}=\frac{2 \eta_1^3}{v\lambda +2\eta_1^3}z
=\frac{16 \|z\|_2^{3/2} \cos^3\left(\frac{\pi}{3}-\frac{\psi(z)}{3}\right)}{3\sqrt{3}v\lambda +16 \|z\|_2^{3/2} \cos^3\left(\frac{\pi}{3}-\frac{\psi(z)}{3}\right)}z.
\end{equation*}
\end{enumerate}
Finally, we compare the objective function values $Q_{2,1/2}(\tilde{x})$ and $Q_{2,1/2}(0)$.
For this purpose, when $\|z\|_2>  3\left(\frac{v\lambda}{4}\right)^{2/3}$, we define
\[
\begin{array}{lll}
H(\|z\|_2)&:=\frac{v}{\|\tilde{x}\|_2}\left(Q_{2,1/2}(0)-Q_{2,1/2}(\tilde{x})\right)\\
&=\frac{v}{\|\tilde{x}\|_2}\left(\frac{1}{2v}\|z\|_2^2-\lambda\|\tilde{x}\|_2^{1/2}-\frac{1}{2v}\|\tilde{x}-z\|_2^2\right)\\
&=\|z\|_2-\frac{\|\tilde{x}\|_2^2+2v\lambda \|\tilde{x}\|_2^{1/2}}{2\|\tilde{x}\|_2}\\
&=\frac{1}{2}\|z\|_2-\frac{3}{4}v\lambda \|\tilde{x}\|_2^{-1/2},
\end{array}
\]
where the third equality holds since that $\tilde{x}$ is proportional to $z$, and fourth equality follows from \eqref{eq-optimalityPO2-2,1/2}.
Since both $\|z\|_2$ and $\|\tilde{x}\|_2$ are strictly increasing on $\|z\|_2$, $H(\|z\|_2)$ is also strictly increasing when
$\|z\|_2>  3\left(\frac{v\lambda}{4}\right)^{2/3}$. Thus the unique solution of $H(\|z\|_2)=0$ satisfies
\[
\|z\|_2 \|\tilde{x}\|_2^{1/2}=\frac{3}{2} v\lambda,
\]
and further, \eqref{eq-optimalityPO2-2,1/2} implies that the solution of $H(\|z\|_2)=0$ is
\[
\|z\|_2=\frac{3}{2}(v\lambda)^{2/3}.
\]
Therefore, we arrive at the formulae \eqref{eq-GSOP-2,1/2-solution} and \eqref{eq-GSOP-2,1/2-solution-2}.
\item[(v)] When $p=2$ and $q=2/3$, \eqref{eq-FOC} reduces to
\begin{equation}\label{eq-optimalityPO-2,2/3}
\frac{2\lambda \tilde{x}}{3\|\tilde{x}\|_2^{4/3}}+\frac{1}{v}(\tilde{x}-z)=0,
\end{equation}
and consequently,
\begin{equation}\label{eq-optimalityPO2-2,2/3}
\|\tilde{x}\|_2^{4/3}-\|z\|_2\|\tilde{x}\|_2^{1/3}+\frac{2}{3}v\lambda=0.
\end{equation}
Denote $\eta=\|\tilde{x}\|_2^{1/3}> 0$ and $h(t)=t^4-\|z\|_2t+\frac{2}{3}v\lambda$. Thus, $\eta$ is the positive solution of $h(t)=0$. Next, we seek $\eta$ by the method of undetermined coefficients. Assume that
\begin{equation}\label{eq-appd-h0}
h(t)=t^4-\|z\|_2t+\frac{2}{3}v\lambda=(t^2+at+b)(t^2+ct+d),\quad {\rm where}~ a,b,c,d\in \R.
\end{equation}
By expansion and comparison, we have
\[a+c=0,\quad b+d+ac=0,\quad ad+bc=-\|z\|_2,\quad bd=\frac{2}{3}v\lambda,\]
and thus,
\begin{equation}\label{eq-appd-2}
c=-a, b=\frac12\left(a^2+\frac{\|z\|_2}{a}\right), d=\frac12\left(a^2-\frac{\|z\|_2}{a}\right), bd=\frac14\left(a^4-\frac{\|z\|_2^2}{a^2}\right)=\frac{2}{3}v\lambda.
\end{equation}
By letting $M=a^2$, the last one of the above equalities reduces to the following cubic algebraic equation
\begin{equation}\label{eq-appd-1}
M^3-\frac{8}{3}v\lambda M-\|z\|_2^2=0.
\end{equation}
According to the Cardano formula for the cubic equation, the root of \eqref{eq-appd-1} can be represented by
\begin{equation*}\label{eq-appd-a}
a^2=M=\left(\frac{\|z\|_2^2}{2}+\sqrt{\frac{\|z\|_2^4}{4}-\left(\frac89 v\lambda\right)^3}\right)^{1/3}
+\left(\frac{\|z\|_2^2}{2}-\sqrt{\frac{\|z\|_2^4}{4}-\left(\frac89 v\lambda\right)^3}\right)^{1/3},
\end{equation*}
which can also be reformulated in the following hyperbolic form (see \cite{Short37})
\begin{equation}\label{eq-appd-b}
a^2=M=\frac43\sqrt{2v\lambda}\cosh\left(\frac{\varphi(z)}{3}\right),
\end{equation}
where $\varphi(z)$ is given by \eqref{eq-GSOP-2,2/3-solution-2}.
By \eqref{eq-appd-h0} and \eqref{eq-appd-2}, we have that $\eta$, the positive root of $h(t)=0$, satisfies
\[\eta^2+a\eta+\frac12\left(a^2+\frac{\|z\|_2}{a}\right)=0\quad {\rm or}\quad \eta^2-a\eta+\frac12\left(a^2-\frac{\|z\|_2}{a}\right)=0.\]
Hence, the real roots of the above equations, that is, the real roots of $h(t)=0$, are
\begin{equation}\label{eq-appd-3}
\eta_1=\frac12\left(|a|+\sqrt{\frac{2\|z\|_2}{|a|}-a^2}\right), \quad \eta_2=\frac12\left(|a|-\sqrt{\frac{2\|z\|_2}{|a|}-a^2}\right).
\end{equation}
It is easy to see that $\eta_1>\eta_2$ and that $\eta_2$ should be discarded as it induces the saddle point rather than a minimum (since $h(t)>0$ when $t<\eta_2$). Thus, by \eqref{eq-optimalityPO-2,2/3}, \eqref{eq-appd-b} and \eqref{eq-appd-3}, one has
\[
\tilde{x}=\frac{3\eta_1^4}{2v\lambda+3\eta_1^4}z=\frac{3\left(a^{3/2}+\sqrt{2\|z\|_2-a^3}\right)}{32v\lambda a^2+3\left(a^{3/2}+\sqrt{2\|z\|_2-a^3}\right)}z,
\]
where $a$ is given by \eqref{eq-GSOP-2,2/3-solution-2}.
Finally, we compare the objective function values $Q_{2,2/3}(\tilde{x})$ and $Q_{2,2/3}(0)$.
For this purpose, we define
\[
\begin{array}{lll}
H(\|z\|_2)&:=\frac{v}{\|\tilde{x}\|_2}\left(Q_{2,2/3}(0)-Q_{2,2/3}(\tilde{x})\right)\\
&=\frac{v}{\|\tilde{x}\|_2}\left(\frac{1}{2v}\|z\|_2^2-\lambda\|\tilde{x}\|_2^{2/3}-\frac{1}{2v}\|\tilde{x}-z\|_2^2\right)\\
&=\|z\|_2-\frac{\|\tilde{x}\|_2^2+2v\lambda \|\tilde{x}\|_2^{2/3}}{2\|\tilde{x}\|_2}\\
&=\frac{1}{2}\|z\|_2-\frac{2}{3}v\lambda \|\tilde{x}\|_2^{-1/3},
\end{array}
\]
where the third equality holds since that $\tilde{x}$ is proportional to $z$, and fourth equality follows from \eqref{eq-optimalityPO2-2,2/3}.
Since both $\|z\|_2$ and $\|\tilde{x}\|_2$ are strictly increasing on $\|z\|_2$, $H(\|z\|_2)$ is also strictly increasing when
$\|z\|_2>  4(\frac29 v\lambda)^{3/4}$. Thus the unique solution of $H(\|z\|_2)=0$ satisfies
\[
\|z\|_2 \|\tilde{x}\|_2^{1/3}=\frac{4}{3} v\lambda,
\]
and further, \eqref{eq-optimalityPO2-2,2/3} implies that the solution of $H(\|z\|_2)=0$ is
\[
\|z\|_2=2\left(\frac23v\lambda\right)^{3/4}.
\]
Therefore, we arrive at the formulae \eqref{eq-GSOP-2,2/3-solution} and \eqref{eq-GSOP-2,2/3-solution-2}.
\end{enumerate}
\end{proof}

\begin{remark}
Note from \eqref{eq-GSOP-2,0-solution}, \eqref{eq-GSOP-2,1/2-solution}, \eqref{eq-GSOP-1,1/2-solution}, \eqref{eq-GSOP-2,2/3-solution} and \eqref{eq-GSOP-1,2/3-solution} that the solutions of the proximal optimization subproblems might not be unique when $Q_{p,q}(\tilde{x})=Q_{p,q}(0)$. To avoid this obstacle in numerical computations,
we select the solution $P_{p,q}(z)=0$ whenever $Q_{p,q}(\tilde{x})=Q_{p,q}(0)$, which achieves a more sparse solution, in the definition of the proximal operator to guarantee a unique update.
\end{remark}

\begin{remark}
By Proposition \ref{thm-Lp,q-formula}, one sees that the proximal gradient method meets the group sparsity structure,
since the components of each iterate within each group are likely to be either all zeros or all nonzeros.
When $n_{\max}=1$, the data do not form any group structure in the feature space,
and the sparsity is achieved only on the individual feature level.
In this case, the proximal operators $P_{2,1}(z)$, $P_{2,0}(z)$, and $P_{2,1/2}(z)$ and $P_{1,1/2}(z)$ reduce to
the soft thresholding function in \cite{Daubechies04}, the hard thresholding function in \cite{Blumensath08} and
the half thresholding function in \cite{XuZB12}, respectively.
\end{remark}


\begin{remark}
Proposition \ref{thm-Lp,q-formula} presents the analytical solution of the proximal optimization subproblems \eqref{eq-PPA-Lp,q-subproblem} when $q=0,1/2,2/3,1$.
However, in other cases, the analytical solution of \eqref{eq-PPA-Lp,q-subproblem} seems not available, since the algebraic equation \eqref{eq-FOC} does
not have an analytical solution (it is difficult to find an analytical solution for the algebraic equation whose order is larger than four).
Thus, in the general cases of $q\in (0,1)$, we alternatively use the Newton method to solve the nonlinear equation \eqref{eq-FOC}, which is the optimality condition of the proximal optimization subproblem.
The numerical simulation in Figure \ref{fig-vary-q} of Section 4 shows that the Newton method works in solving the proximal optimization subproblems \eqref{eq-PPA-Lp,q-subproblem} for the general $q$, while the $\ell_{p,1/2}$ regularization is the best one among the $\ell_{p,q}$ regularizations for $q\in[0,1]$.
\end{remark}

\subsection{Linear convergence rate}
Recall that convergence results of PGM-GSO are from the references \cite{BeckTeboulle09,Blumensath08,BolteTeboulle13}, saying that the generated sequence globally converges to a critical point or a global/local minimum of the $\ell_{p,q}$ regularization problem. However, the result on convergence rates of the proximal gradient method for solving lower-order regularization problems is still undiscovered.
In this subsection, we will establish the linear convergence rate of PGM-GSO for the case $p=1$ and $0<q<1$.

%
%
%
%
By virtue of the second-order necessary condition of \eqref{eq-PPA-Lp,q}, the following lemma provides a lower bound for nonzero groups of sequence $\{x^k\}$ generated by the PGM-GSO and shows that the index set of nonzero groups of $\{x^k\}$ maintains constant for large $k$.

\begin{lemma}\label{lem-LB}
Let $p=1$, $0<q<1$ and $K=\left(v \lambda q (1-q)\right)^{\frac{1}{2-q}}$. Let $\{x^k\}$ be a sequence generated by the {\rm PGM-GSO} with $v< \frac{1}{2}\|A\|_2^{-2}$.
Then the following statements hold:
\begin{enumerate}[{\rm (i)}]
\item for any $i$ and $k$, if $x^k_{\mathcal{G}_i}\neq 0$, then $\|x^k_{\mathcal{G}_i}\|_1\ge K$.
\item $x^k$ shares the same index set of nonzero groups for large $k$, that is, there exist $N\in \mathbb{N}$ and $\mathcal{I}\subseteq \{1,\dots,r\}$ such that
\begin{equation*}
\left\{\begin{matrix}
   x^k_{\mathcal{G}_i}\neq 0,&{i\in \mathcal{I},}\\
   x^k_{\mathcal{G}_i}= 0, &{i\notin \mathcal{I}},
\end{matrix}\right. \quad \mbox{for all $k\ge N$}.
\end{equation*}
\end{enumerate}
\end{lemma}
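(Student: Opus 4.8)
The plan is to exploit the optimality conditions of the proximal subproblem \eqref{eq-PPA-Lp,q-subproblem} when $p=1$. For part (i), fix $i$ and $k$ with $x^k_{\mathcal{G}_i}\neq 0$; say the update at step $k-1$ produced this group from $z^{k-1}_{\mathcal{G}_i}$. Since $x^k_{\mathcal{G}_i}$ is a (nonzero) minimizer of $x_{\mathcal{G}_i}\mapsto \lambda\|x_{\mathcal{G}_i}\|_1^q+\frac{1}{2v}\|x_{\mathcal{G}_i}-z^{k-1}_{\mathcal{G}_i}\|_2^2$, it satisfies both the first-order stationarity equation \eqref{eq-FOC} and the second-order necessary condition that the Hessian of this (twice-differentiable-away-from-zero) objective be positive semidefinite at $x^k_{\mathcal{G}_i}$. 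For $p=1$, the map $t\mapsto \lambda t^q+\frac{1}{2v}(t-\text{const})^2$ (in the scalar magnitude $t=\|x_{\mathcal{G}_i}\|_1$, with signs fixed) has second derivative $\lambda q(q-1)t^{q-2}+\frac{1}{v}$, and requiring this to be $\ge 0$ at $t=\|x^k_{\mathcal{G}_i}\|_1$ gives exactly $\|x^k_{\mathcal{G}_i}\|_1^{2-q}\ge v\lambda q(1-q)$, i.e. $\|x^k_{\mathcal{G}_i}\|_1\ge K$. (One must be slightly careful that the relevant second-order condition is really the one along the radial direction $z^{k-1}_{\mathcal{G}_i}$; the cleanest route is to observe that on the open region where all signs are constant the objective is smooth and separates, after a change of coordinates, into a sum over the one-dimensional radial variable plus directions that do not involve the $\ell_1$ term, so the binding curvature constraint is the radial one.)

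For part (ii), the idea is that the sequence of objective values $\{F(x^k)\}$ is nonincreasing and the iterates converge (this is the content of Theorem \ref{thm-GP-SO-2}(iii): $\{x^k\}$ converges to a critical point $x^*$). Combining convergence $x^k\to x^*$ with the uniform lower bound $K>0$ from part (i): for each group $i$, either $x^*_{\mathcal{G}_i}=0$ or $x^*_{\mathcal{G}_i}\neq 0$. If $x^*_{\mathcal{G}_i}\neq 0$ then $\|x^k_{\mathcal{G}_i}\|_1\to\|x^*_{\mathcal{G}_i}\|_1>0$, so $x^k_{\mathcal{G}_i}\neq 0$ for all large $k$. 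If $x^*_{\mathcal{G}_i}=0$, I claim $x^k_{\mathcal{G}_i}=0$ for all large $k$: indeed $\|x^k_{\mathcal{G}_i}\|_1\to 0$, but by part (i) any nonzero value must be $\ge K$; since $K$ does not depend on $k$, once $\|x^k_{\mathcal{G}_i}\|_1<K$ (which happens for all large $k$) we must have $x^k_{\mathcal{G}_i}=0$ exactly. Taking $\mathcal{I}:=\{i:x^*_{\mathcal{G}_i}\neq 0\}$ and $N$ large enough to handle all $r$ groups simultaneously gives the conclusion.

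The main obstacle, and the step deserving the most care, is making the second-order necessary condition in part (i) precise and correctly identifying which direction gives the binding inequality — in particular handling the fact that $\|\cdot\|_1$ is only piecewise smooth, so one needs the iterate to lie in the interior of an orthant (equivalently every coordinate in $\mathcal{G}_i$ is nonzero) for the Hessian computation to be literally valid, or else argue via a one-dimensional restriction to the segment through $z^{k-1}_{\mathcal{G}_i}$ where smoothness holds. A secondary point to be careful about is that part (ii) uses the \emph{convergence} of $\{x^k\}$ (not merely boundedness), which is exactly why we invoke Theorem \ref{thm-GP-SO-2}(iii); the uniformity of $K$ over $k$ is what upgrades ``$\|x^k_{\mathcal{G}_i}\|_1\to 0$'' to ``$x^k_{\mathcal{G}_i}$ is eventually \emph{identically} zero'', which is the crux of the eventual-support-stabilization claim.
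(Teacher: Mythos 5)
Your proposal is correct and follows essentially the same route as the paper: part (i) is exactly the paper's argument — a second-order necessary condition for the proximal subproblem, made rigorous by restricting to the coordinates of $\mathcal{G}_i$ where $x^k$ is nonzero (the paper restricts to $\R^{a_i^k}\times\{0\}$ and tests the Hessian $\frac1v I+\lambda q(q-1)M_i^k\succeq 0$ along a coordinate direction, which is the same repair you sketch for the nonsmoothness of $\|\cdot\|_1$; your radial restriction also works since the data-term curvature along an $\ell_1$-normalized ray is $\|u\|_2^2/v\le 1/v$, so the inequality goes the right way). Part (ii) uses the same ingredients (convergence to $x^*$ from Theorem \ref{thm-GP-SO-2} plus the uniform bound $K$); you compare each iterate directly with $x^*$ and take $\mathcal{I}$ to be the group support of $x^*$, whereas the paper argues by contradiction on consecutive iterates via $\|x^{k+1}-x^k\|_2\ge K/\sqrt{n}$ — a cosmetic difference only.
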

\begin{proof}
\begin{enumerate}[{\rm (i)}]
\item For each group $x^k_{\mathcal{G}_i}$, by \eqref{eq-PPA-Lp,q}, one has that
\begin{equation}\label{eq-GPA-LB-1}
x^{k}_{\mathcal{G}_i}\in {\rm Arg}\min_{x\in \R^{n_i}}\left\{\lambda \|x\|_1^q+\frac{1}{2v}\|x-z^{k-1}_{\mathcal{G}_i}\|_2^2\right\}.
\end{equation}
If $x^k_{\mathcal{G}_i} \neq 0$, we define $\mathcal{A}_i^k:=\{j\in \mathcal{G}_i: x^k_j\neq 0\}$ and $a_i^k:=|\mathcal{A}_i^k|$. Without loss of generality, we assume that the first $a_i^k$ components of $x^k_{\mathcal{G}_i}$ are nonzeros. Then \eqref{eq-GPA-LB-1} implies that
\begin{equation}\label{eq-GPA-LB-1a}
x^{k}_{\mathcal{G}_i}\in {\rm Arg}\min_{x\in \R^{a_i^k}\times\{0\}}\left\{\lambda \|x\|_1^q+\frac{1}{2v}\|x-z^{k-1}_{\mathcal{G}_i}\|_2^2\right\}.
\end{equation}

The second-order necessary condition of \eqref{eq-GPA-LB-1a} implies that
\[
\frac1v I_i^k + \lambda q (q-1) M_i^k \succeq 0,
\]
where $I_i^k$ is the identity matrix in $R^{a_i^k \times a_i^k}$ and $M_i^k=\|x^{k}_{\mathcal{A}_i^k}\|_1^{q-2} ({\rm sign}(x^{k}_{\mathcal{A}_i^k}))({\rm sign}(x^{k}_{\mathcal{A}_i^k}))^\top$.
Let $e$ be the first column of $I_i^k$. Therefore, we obtain that
\[
\frac1v e^\top I_i^k e + \lambda q (q-1) e^\top M_i^k e \ge 0,
\]
that is,
\[
\frac1v+\lambda q (q-1)\|x^{k}_{\mathcal{A}_i^k}\|_1^{q-2}\ge 0.
\]
Consequently, it implies that
\[
\|x^{k}_{\mathcal{G}_i}\|_1=\|x^{k}_{\mathcal{A}_i^k}\|_1\ge \left(v \lambda q (1-q)\right)^{\frac{1}{2-q}} =K.
\]
Hence, it completes the proof of (i).
\item Recall from Theorem \ref{thm-GP-SO-2} that $\{x^k\}$ converges to a critical point $x^*$. Then there exists $N\in \mathbb{N}$ such that $\|x^k-x^*\|_2< \frac{K}{2\sqrt{n}}$, and thus,
\begin{equation}\label{eq-GPA-LB-2}
\|x^{k+1}-x^k\|_2\le \|x^{k+1}-x^*\|_2+\|x^k-x^*\|_2< \frac{K}{\sqrt{n}},
\end{equation}
 for any $k\ge N$. Proving by contradiction, without loss of generality, we assume that there exist $k\ge N$ and $i\in \{1,\dots,r\}$ such that $x^{k+1}_{\mathcal{G}_i}\neq 0$ and $x^k_{\mathcal{G}_i}= 0$. Then it follows from (i) that
\begin{equation}\label{eq-GPA-LB-3}
\|x^{k+1}-x^k\|_2\ge \frac{1}{\sqrt{n}}\|x^{k+1}-x^k\|_1\ge \frac{1}{\sqrt{n}} \|x^{k+1}_{\mathcal{G}_i}-x^{k}_{\mathcal{G}_i}\|_1\ge \frac{K}{\sqrt{n}}.
\end{equation}
Hence we arrive at a contradiction from \eqref{eq-GPA-LB-2} and \eqref{eq-GPA-LB-3}. The proof is complete.
\end{enumerate}
\end{proof}

The following lemma provides the first- and second-order conditions for a local minimum of $\ell_{1,q}$ regularization problem.

\begin{lemma}\label{thm-SOG}
Let $p=1$ and $0<q<1$.
Assume that $x^*$ is a local  minimum of \eqref{eq-GSOP}, and that any nonzero group of $x^*$ is active; without loss of generality, we assume that $x^*$ is of structure ${x^*} = ({y^*}^\top,0)^\top$ with
\begin{equation}\label{eq-structure}
\mbox{$y^*=({x^*_{\mathcal{G}_1}}^\top,\dots,{x^*_{\mathcal{G}_S}}^\top )^\top$ and  $x^*_{\mathcal{G}_i} \neq_\mathbf{a} 0$ for $i=1,\dots,S$.}
\end{equation}
Let $A=(B,D)$, where $B$ is a submatrix corresponding to $y^*$, i.e., $B=(A_{\cdot j})$ with $j\in \{\mathcal{G}_i:i\in \mathcal{S}\}$ and $D=(A_{\cdot j})$ with $j\in \{\mathcal{G}_i:i\in \mathcal{S}^c\}$. Consider the following restricted problem
 \begin{equation}\label{eq-SOG-3}
\min_{y\in \R^{n_\mathbf{a}}}\quad  f(y)+\varphi(y),
\end{equation}
where $n_{\mathbf{a}}:=\sum_{i\in \mathcal{S}} n_i$, and
\[
f:\R^{n_\mathbf{a}}\to \R \quad {\rm by} \quad f(y):=\|By-b\|_2^2\quad \mbox{for any } y\in \R^{n_\mathbf{a}},
\]
\[
\varphi:\R^{n_\mathbf{a}}\to \R \quad {\rm by} \quad \varphi(y):=\lambda \|y\|_{1,q}^q \quad \mbox{for any } y\in \R^{n_\mathbf{a}}.
\]
Then the following statements are true:
\begin{enumerate}[{\rm (i)}]
  \item The following first- and second-order conditions hold
  \begin{equation}\label{eq-SOG-1}
2B^\top (By^*-b)+\lambda q
\left( \begin{array}{c}  \|y^*_{\mathcal{G}_1}\|_1^{q-1}{\rm sign}(y^*_{\mathcal{G}_1})\\ \vdots \\
\|y^*_{\mathcal{G}_S}\|_1^{q-1}{\rm sign}(y^*_{\mathcal{G}_S})\\
\end{array}\right)=0,
\end{equation}
and
\begin{equation}\label{eq-SOG-2}
2B^\top B+\lambda q(q-1)\left(\begin{array}{cccc}  M^*_1 &0 & 0\\ 0&\ddots &0 \\ 0&0 & M^*_S\\
\end{array}\right)\succ 0,
\end{equation}
where $M^*_i=\|y^*_{\mathcal{G}_i}\|_1^{q-2}\left({\rm sign}(y^*_{\mathcal{G}_i})\right)\left({\rm sign}(y^*_{\mathcal{G}_i})\right)^\top$.
  \item The second-order growth condition holds at $y^*$ for problem \eqref{eq-SOG-3},
that is, there exist $\varepsilon>0$ and $\delta>0$ such that
\begin{equation}\label{eq-SOG-4}
(f+\varphi)(y)\geq (f+\varphi)(y^*)+\varepsilon\|y-y^*\|_2^2\quad \mbox{for any } y\in B(y^*,\delta).
\end{equation}
\end{enumerate}
 \end{lemma}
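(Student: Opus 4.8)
The plan is to exploit that, since every nonzero group of $x^*$ is active, the penalty $\varphi$ is $C^\infty$ in a neighbourhood of $y^*$, so that the restricted problem \eqref{eq-SOG-3} is a smooth minimization problem around $y^*$ and ordinary first/second order calculus applies. Concretely: because $x^*=(y^{*\top},0)^\top$ minimizes $F$ locally over $\R^n$ and the restriction of $F$ to $\{(y^\top,0)^\top:y\in\R^{n_{\mathbf a}}\}$ is precisely $f+\varphi$, the point $y^*$ is a local minimizer of $f+\varphi$. By \eqref{eq-structure} one may pick $\delta>0$ so small that ${\rm sign}(y_j)={\rm sign}(y^*_j)$ for every $j$ whenever $y\in\mathbf B(y^*,\delta)$; then $\|y_{\mathcal G_i}\|_1=\langle{\rm sign}(y^*_{\mathcal G_i}),y_{\mathcal G_i}\rangle>0$ on this ball, so $\varphi(y)=\lambda\sum_{i=1}^S\|y_{\mathcal G_i}\|_1^q$ is $C^\infty$ there and, $f$ being quadratic, so is $f+\varphi$. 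Differentiating, $\nabla(f+\varphi)(y^*)$ is exactly the left-hand side of \eqref{eq-SOG-1}, so stationarity of the local minimizer gives \eqref{eq-SOG-1}; one further differentiation gives $\nabla^2(f+\varphi)(y^*)=2B^\top B+\lambda q(q-1){\rm diag}(M^*_1,\dots,M^*_S)$, which the second-order necessary condition makes positive semidefinite.

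The main point is to promote this to the strict inequality \eqref{eq-SOG-2}, and I would argue by contradiction. Suppose $0\neq w\in\ker\nabla^2(f+\varphi)(y^*)$ and consider $\phi(t):=(f+\varphi)(y^*+tw)$, which is real-analytic for $|t|$ small since each $a_i:=\|y^*_{\mathcal G_i}\|_1$ is positive. Writing $b_i:=\langle{\rm sign}(y^*_{\mathcal G_i}),w_{\mathcal G_i}\rangle$, we have $\phi'(0)=0$ by \eqref{eq-SOG-1}, $\phi''(0)=w^\top\nabla^2(f+\varphi)(y^*)w=0$ by assumption, $\phi'''(0)=\lambda q(q-1)(q-2)\sum_i a_i^{q-3}b_i^3$, and $\phi^{(4)}(0)=\lambda\,q(q-1)(q-2)(q-3)\sum_i a_i^{q-4}b_i^4$. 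Because $0<q<1$, the factor $q(q-1)(q-2)(q-3)$ is negative, so $\phi^{(4)}(0)\le 0$, with strict inequality unless $b_i=0$ for all $i$. If some $b_i\neq 0$, then $t=0$ cannot be a local minimum of $\phi$ (if $\phi'''(0)\neq0$ it is an inflection point, and if $\phi'''(0)=0$ then $\phi^{(4)}(0)<0$ makes it a strict local maximum), contradicting the local minimality of $y^*$; if all $b_i=0$, then $\phi''(0)=0$ reduces to $\|Bw\|_2^2=0$, whence $w=0$ because $B$ has full column rank (the active columns of $A$ being linearly independent, as in Theorem~\ref{thm-RB-local}), again a contradiction. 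This one-dimensional analytic reduction, together with tracking the sign of $q(q-1)(q-2)(q-3)$, is the delicate step; it is exactly where the three hypotheses ($0<q<1$, activeness of the groups, and linear independence of the active columns) all enter. In particular, without the nondegeneracy of $B$ one could only conclude positive semidefiniteness, and indeed \eqref{eq-SOG-4} would fail along any such direction $w$, so this hypothesis is essential.

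Part (ii) then follows from (i) by a routine Taylor estimate. Set $2\mu:=\lambda_{\min}\bigl(\nabla^2(f+\varphi)(y^*)\bigr)>0$, which is positive by \eqref{eq-SOG-2}; shrink $\delta$, using continuity of the Hessian on $\mathbf B(y^*,\delta)$, so that $\nabla^2(f+\varphi)(\zeta)\succeq\mu I$ for all $\zeta\in\mathbf B(y^*,\delta)$; then combine Taylor's formula with Lagrange remainder with the first-order condition \eqref{eq-SOG-1} to obtain $(f+\varphi)(y)\ge(f+\varphi)(y^*)+\tfrac{\mu}{2}\|y-y^*\|_2^2$ for every $y\in\mathbf B(y^*,\delta)$, i.e.\ \eqref{eq-SOG-4} holds with $\varepsilon=\mu/2$ and this $\delta$.
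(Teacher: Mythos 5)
Your proof follows essentially the same route as the paper's: activeness of the groups makes $\varphi$ smooth near $y^*$, the standard first- and second-order necessary conditions give \eqref{eq-SOG-1} and positive semidefiniteness, the upgrade to the strict inequality \eqref{eq-SOG-2} is done by restricting to a line through a putative kernel direction $w$ and inspecting the third and fourth derivatives, whose signs are governed by $q(q-1)(q-2)$ and $q(q-1)(q-2)(q-3)$, and the growth condition \eqref{eq-SOG-4} then follows from the positive definite Hessian --- the paper invokes \cite[Theorem 13.24]{Roc98} where you run the Taylor estimate with Lagrange remainder directly, an immaterial difference. The one genuine divergence is the degenerate subcase $b_i=\langle{\rm sign}(y^*_{\mathcal{G}_i}),w_{\mathcal{G}_i}\rangle=0$ for all $i$: the paper's proof tacitly assumes some $b_i\neq 0$ (its strict inequality $h^{(4)}(0)<0$ in \eqref{eq-SOG-2b} needs this) and never discusses what happens otherwise, whereas you close this case by assuming $B$ has full column rank. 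Be aware that this assumption is not among the hypotheses of the lemma as stated (it appears only in Theorem \ref{thm-RB-local}), so formally your argument uses more than is given; on the other hand your side remark is mathematically sound --- if there were $w\neq 0$ with $Bw=0$ and all $b_i=0$, then $f+\varphi$ would be locally constant along $w$, so \eqref{eq-SOG-2} and \eqref{eq-SOG-4} would both fail even though $y^*$ could still be a local minimum --- so some such nondegeneracy is indeed needed, and the paper's own proof is silent at exactly the point you chose to patch.
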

\begin{proof}
\begin{enumerate}[(i)]
  \item
By \eqref{eq-structure}, one has that $\varphi(\cdot)$ is smooth around $y^*$ with its first- and second-derivatives being
\[
\varphi'(y^*) = \lambda q
\left( \begin{array}{c}  \|y^*_{\mathcal{G}_1}\|_1^{q-1}{\rm sign}(y^*_{\mathcal{G}_1})\\ \vdots \\
\|y^*_{\mathcal{G}_S}\|_1^{q-1}{\rm sign}(y^*_{\mathcal{G}_S})\\
\end{array}\right),
\]
and
\[
\varphi''(y^*) = \lambda q(q-1)\left(\begin{array}{cccc}  M^*_1 &0 & 0\\ 0&\ddots &0 \\ 0&0 & M^*_S\\
\end{array}\right);
\]
hence $(f+\varphi)(\cdot)$ is also smooth around $y^*$.
   Therefore we obtain the following first- and second-order necessary conditions of \eqref{eq-SOG-3}
   \[
   f'(y^*)+\varphi'(y^*)=0\quad {\rm and}\quad f''(y^*)+\varphi''(y^*)\succeq 0,
   \]
   which are \eqref{eq-SOG-1} and
   \begin{equation}\label{eq-SOG-2a}
    2B^\top B+\lambda q(q-1)\left(\begin{array}{cccc}  M^*_1 &0 & 0\\ 0&\ddots &0 \\ 0&0 & M^*_S\\
    \end{array}\right)\succeq 0,
    \end{equation}
   respectively.
   Proving by contradiction, we assume that (\ref{eq-SOG-2}) does not hold, i.e., there exists some $w\neq 0$ such that
\[
2 w^\top B^\top Bw + \lambda q(q-1) \sum_{i=1}^S \left( \|y^*_{\mathcal{G}_i}\|_1^{q-2} \cdot \left(\sum_{j\in \mathcal{G}_i} w_j{\rm sign}(y_j^*)\right)^2\right)=0.
\]
Let  $h:\R \to \R$ with $h(t):=\|B(y^*+tw)-b\|_2^2+\lambda\|y^*+tw\|_p^p$. Clearly, $h(\cdot)$ has a local minimum at 0, and $h(\cdot)$ is smooth around 0 with its derivatives being
\[
h'(0)=2 w^\top B^\top (By^*-b)+\lambda q \sum_{i=1}^S \left( \|y^*_{\mathcal{G}_i}\|_1^{q-1} \cdot \sum_{j\in \mathcal{G}_i} w_j{\rm sign}(y_j^*)\right)=0,
 \]
 \[
 h''(0)=2 w^\top B^\top Bw + \lambda q(q-1) \sum_{i=1}^S \left( \|y^*_{\mathcal{G}_i}\|_1^{q-2} \cdot \left(\sum_{j\in \mathcal{G}_i} w_j{\rm sign}(y_j^*)\right)^2\right) =0,
   \]
   \[
h^{(3)}(0)=\lambda q (q-1)(q-2) \sum_{i=1}^S \left( \|y^*_{\mathcal{G}_i}\|_1^{q-3} \cdot \left(\sum_{j\in \mathcal{G}_i} w_j{\rm sign}(y_j^*)\right)^3\right)=0,
 \]
   and
\begin{equation}\label{eq-SOG-2b}
h^{(4)}(0)=\lambda q (q-1)(q-2)(q-3) \sum_{i=1}^S \left( \|y^*_{\mathcal{G}_i}\|_1^{q-4} \cdot \left(\sum_{j\in \mathcal{G}_i} w_j{\rm sign}(y_j^*)\right)^4\right)<0.
\end{equation}
However, it is clear that $h^{(4)}(0)$ must be nonnegative, which yields a contradiction to \eqref{eq-SOG-2b}.  Therefore, we proved \eqref{eq-SOG-2}.
  \item By the structure of $y^*$ (cf. \eqref{eq-structure}), $\varphi(\cdot)$ is smooth around $y^*$, and thus, $(f+\varphi)(\cdot)$ is also smooth around $y^*$ with its derivatives being
   \[
   f'(y^*)+\varphi'(y^*)=0\quad {\rm and}\quad f''(y^*)+\varphi''(y^*)\succ 0
   \]
   (due to \eqref{eq-SOG-1} and \eqref{eq-SOG-2}).
Hence the second-order growth condition \eqref{eq-SOG-4} follows from \cite[Theorem 13.24]{Roc98}. This completes the proof.
\end{enumerate}
\end{proof}

The key of convergence rate analysis of PGM-GSO is the descent of the functional $f+\varphi$ in each iteration step.
The following lemma states some basic properties of active groups of sequence $\{x^k\}$ generated by the PGM-GSO.

\begin{lemma}\label{lem-LC}
Let $p=1$ and $0<q<1$. Let $\{x^k\}$ be a sequence generated by the {\rm PGM-GSO} with $v< \frac{1}{2}\|A\|_2^{-2}$, which converges to $x^*$ (by Theorem \ref{thm-GP-SO-2}). Let the assumptions and notations used in Lemma \ref{thm-SOG} be adopted.
We further define
\[
\alpha:=\|B\|_2^2,\quad L:=2\|A\|_2^2 \quad {\rm and} \quad D_k:=\varphi(y^k)-\varphi(y^{k+1})+ \langle f'(y^k), y^k-y^{k+1}\rangle.
\]
Then there exist some $\delta>0$ and $N\in \mathbb{N}$ such that the following inequalities hold for any $w\in B(y^*,\delta)$ and any $k\ge N$:
\begin{equation}\label{eq-lem-LC-1}
\varphi(w)-\varphi(y^{k+1})+ \langle f'(y^k), w-y^{k+1}\rangle \ge \frac1v \langle y^k- y^{k+1}, w-y^{k+1}\rangle-\alpha \|w-y^{k+1}\|_2^2,
\end{equation}
\begin{equation}\label{eq-lem-LC-2}
D_k \ge \left(\frac1v -\alpha\right) \| y^k- y^{k+1}\|_2^2,
\end{equation}
and
\begin{equation}\label{eq-lem-LC-3}
(f+\varphi)(y^{k+1})\le (f+\varphi)(y^{k})- \left(1-\frac{Lv}{2(1-v\alpha)}\right)D_k.
\end{equation}
\end{lemma}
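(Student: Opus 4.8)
The plan is to obtain all three inequalities from the single estimate (\ref{eq-lem-LC-1}); the proof of (\ref{eq-lem-LC-1}) rests on the observation that $\varphi=\lambda\|\cdot\|_{1,q}^q$ is concave and $C^2$ on the open orthant $\mathcal{O}$ consisting of the vectors that share the sign pattern of $y^*$ and have no vanishing coordinate.

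\emph{Setup.} Using Lemma~\ref{lem-LB}, the convergence $x^k\to x^*$, and the hypothesis that every nonzero group of $x^*$ is active, I would first fix $N_0$ so that for all $k\ge N_0$ one has $x^k=((y^k)^\top,0)^\top$ with $y^k\in\mathcal{O}$. Since the proximal step (\ref{eq-PPA-Lp,q-subproblem}) is separable over groups and $Ax^k-b=By^k-b$ (so that $z^k_{\mathcal{G}_{\mathcal{S}}}=y^k-vf'(y^k)$ with $f(y)=\|By-b\|_2^2$), the block $y^{k+1}$ is a minimizer of $\Psi_k(y):=\varphi(y)+\tfrac1{2v}\|y-y^k+vf'(y^k)\|_2^2$; being interior to $\mathcal{O}$, it satisfies the smooth stationarity relation $\varphi'(y^{k+1})+f'(y^k)+\tfrac1v(y^{k+1}-y^k)=0$. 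On $\mathcal{O}$ each group term $(\|y_{\mathcal{G}_i}\|_1)^q$ is concave, hence $\varphi''\preceq0$ there, while the second-order optimality condition (\ref{eq-SOG-2}) of Lemma~\ref{thm-SOG} gives $\varphi''(y^*)\succ-2B^\top B\succeq-2\alpha I$. By continuity of $\varphi''$ near $y^*$ I may fix $\delta>0$ with $B(y^*,2\delta)\subset\mathcal{O}$ and $\varphi''(y)\succeq-2\alpha I$ on $B(y^*,2\delta)$, and then enlarge $N_0$ to $N$ so that $y^k,y^{k+1}\in B(y^*,\delta)$ whenever $k\ge N$.

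\emph{The three inequalities.} For $w\in B(y^*,\delta)$, Taylor's formula with Lagrange remainder on the segment $[y^{k+1},w]\subset B(y^*,2\delta)$ gives $\varphi(w)\ge\varphi(y^{k+1})+\langle\varphi'(y^{k+1}),w-y^{k+1}\rangle-\alpha\|w-y^{k+1}\|_2^2$; substituting the stationarity relation for $\varphi'(y^{k+1})$ and rearranging yields (\ref{eq-lem-LC-1}). Taking $w=y^k$ in (\ref{eq-lem-LC-1}) makes the left-hand side equal to $D_k$ and the right-hand side equal to $(\tfrac1v-\alpha)\|y^k-y^{k+1}\|_2^2$, which is (\ref{eq-lem-LC-2}); note $\tfrac1v-\alpha>0$ because $v<\tfrac12\|A\|_2^{-2}\le\tfrac1{2\alpha}$, and hence also $D_k\ge0$. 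For (\ref{eq-lem-LC-3}) I would combine the exact identity $f(y^{k+1})=f(y^k)+\langle f'(y^k),y^{k+1}-y^k\rangle+\|B(y^{k+1}-y^k)\|_2^2$ with $\|B\|_2^2\le\|A\|_2^2=L/2$ to obtain the descent bound $f(y^{k+1})\le f(y^k)+\langle f'(y^k),y^{k+1}-y^k\rangle+\tfrac L2\|y^{k+1}-y^k\|_2^2$; then substitute $\varphi(y^{k+1})=\varphi(y^k)-D_k-\langle f'(y^k),y^{k+1}-y^k\rangle$ (from the definition of $D_k$), add the two, and bound $\|y^{k+1}-y^k\|_2^2\le vD_k/(1-v\alpha)$ using (\ref{eq-lem-LC-2}), which delivers $(f+\varphi)(y^{k+1})\le(f+\varphi)(y^k)-\bigl(1-\tfrac{Lv}{2(1-v\alpha)}\bigr)D_k$.

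\emph{Main obstacle.} The crux is making the sharp coefficient $\alpha=\|B\|_2^2$ (rather than a larger constant) emerge in (\ref{eq-lem-LC-1}): this is exactly the place where the nonconvexity of $\varphi$ must be absorbed, and it is handled by pairing the concavity of $\varphi$ on $\mathcal{O}$ with the \emph{strict} inequality (\ref{eq-SOG-2}) and a neighborhood/continuity argument. One must also be careful to justify that the iterates eventually enter the open orthant $\mathcal{O}$, for otherwise the smooth first-order condition for $y^{k+1}$, which underlies the whole Taylor argument, would not be available.
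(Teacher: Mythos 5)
Your proposal is correct and follows essentially the same route as the paper: eventual activeness of the iterates via Lemma \ref{lem-LB} and convergence, the smooth first-order condition of the restricted proximal step, a second-order Taylor expansion of $\varphi$ near $y^*$ whose Hessian lower bound comes from the strict condition \eqref{eq-SOG-2}, then $w=y^k$ for \eqref{eq-lem-LC-2} and the descent estimate for $f$ combined with \eqref{eq-lem-LC-2} for \eqref{eq-lem-LC-3}. The only differences are cosmetic (exact quadratic expansion of $f$ instead of citing the descent lemma, and bounding $\varphi''$ directly by $-2\alpha I$ rather than $-2B^\top B$).
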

\begin{proof}
By Lemma \ref{lem-LB}(ii) and the fact that $\{x^k\}$ converges to $x^*$, one has that $x^k$ shares the same index set of nonzero groups with that of $x^*$ for large $k$; further by the structure of $y^*$ (cf. \eqref{eq-structure}), we obtain that all components in nonzero groups of $y^k$ are nonzero for large $k$.
In another word, we have
\begin{equation}\label{eq-ActiveG}
\mbox{there exists $N\in \mathbb{N}$ such that $y_k\neq_\mathbf{a} 0$ and $z^k=0$ for any $k\ge N$;}
\end{equation}
hence $\varphi(\cdot)$ is smooth around $y^k$ for any $k\ge N$

In view of PGM-GSO and the decomposition of $x=\left({y}^\top,{z}^\top\right)^\top$, one has that
\begin{equation}\label{eq-LC-1}
y^{k+1}\in {\rm Arg}\min \left\{\lambda \varphi(y)+\frac{1}{2v}\left\|y-\left(y^k-v f'(y^k)\right)\right\|_2^2\right\}.
\end{equation}
The first-order necessary condition of \eqref{eq-LC-1} is
\begin{equation}\label{eq-LC-2}
\varphi'(y^{k+1})=\frac1v \left( y^k- v f'(y^k) -y^{k+1}\right).
\end{equation}
Recall from \eqref{eq-SOG-2} that $\varphi''(y^*)\succ -2B^\top B$. Since $\varphi(\cdot)$ is smooth around $y^*$, then there exists $\delta >0$ such that
$\varphi''(w)\succ -2B^\top B$ for any $w\in B(y^*,\delta)$.
Noting that $\{y^k\}$ converges to $y^*$, without loss of generality, we assume that $\|y^k-y^*\|<\delta$ for any $k\ge N$ (otherwise, we can choose a larger $N$). Therefore, one has that $\varphi''(y^k)\succ -2B^\top B$ for any $k\ge N$.
Then by Taylor expansion, we can assume without loss of generality that the following inequality holds for any $k\ge N$ and any $w\in B(y^*,\delta)$ (otherwise, we can choose a smaller $\delta$):
\begin{equation*}
\varphi(w)>\varphi(y^{k+1})+\langle \varphi'(y^{k+1}), w-y^{k+1}\rangle-\alpha \|w-y^{k+1}\|_2^2.
\end{equation*}
Hence, by \eqref{eq-LC-2}, it follows that
\begin{equation}\label{eq-LC-3}
\varphi(w)-\varphi(y^{k+1})>\frac1v \langle y^k- v f'(y^k) -y^{k+1}, w-y^{k+1}\rangle-\alpha \|w-y^{k+1}\|_2^2,
\end{equation}
which is reduced to \eqref{eq-lem-LC-1}, and \eqref{eq-lem-LC-2} follows by setting $w = y^k$ in \eqref{eq-lem-LC-1}.
Furthermore, by the definition of $f(\cdot)$, it is of class $C^{1,1}_L$ and it follows from \cite[Proposition A.24]{Bertsekas99} that
\[
\|f(y)-f(x)-f'(x)(y-x)\|\le \frac{L}{2}\|y-x\|^2\quad \mbox{for any } x,y.
\]
Then, by the definition of $D_k$, it follows that
\[
\begin{array}{lll}
(f+\varphi)(y^{k+1})-(f+\varphi)(y^{k})+D_k&=f(y^{k+1})-f(y^{k})+\langle f'(y^k), y^k-y^{k+1}\rangle\\
&\le \frac{L}{2}\|y^{k}-y^{k+1}\|_2^2\\
&\le \frac{Lv}{2(1-v\alpha)}D_k,
\end{array}
\]
where the last inequality follows from \eqref{eq-lem-LC-2}, and thus, \eqref{eq-lem-LC-3} is proved.
\end{proof}

The main result of this subsection is presented as follows, where we prove the linear convergence rate of the PGM-GSO to a local minimum for the case $p=1$ and $0<q<1$ under some mild assumptions.

\begin{theorem}\label{thm-LC}
Let $p=1$ and $0<q<1$.
Let $\{x^k\}$ be a sequence generated by the {\rm PGM-GSO} with $v< \frac{1}{2}\|A\|_2^{-2}$. Then $\{x^k\}$ converges to a critical point $x^*$ of \eqref{eq-GSOP}. Further assume that \ $x^*$ is a local  minimum  of \eqref{eq-GSOP}, and that any nonzero group of $x^*$ is active.
Then there exist $N\in \mathbb{N}$, $C>0$ and $\eta\in (0,1)$ such that
\begin{equation}\label{eq-thm-LC}
F(x^{k})-F(x^*)\le C\eta^k\quad {\rm and} \quad \|x^k-x^*\|_2\le C\eta^k,\quad \mbox{for any } k\ge N.
\end{equation}
\end{theorem}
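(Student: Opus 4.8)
The plan is to combine the structural reduction from Lemma~\ref{lem-LB}(ii) — which freezes the support of $x^k$ for large $k$ — with the three descent estimates of Lemma~\ref{lem-LC} and the second-order growth condition of Lemma~\ref{thm-SOG}(ii), to set up a Kurdyka--{\L}ojasiewicz-type argument with {\L}ojasiewicz exponent $1/2$, which yields linear (geometric) convergence. The first step is to invoke Theorem~\ref{thm-GP-SO-2}(iii) to know that $\{x^k\}$ converges to a critical point $x^*$; then, adding the hypothesis that $x^*$ is a local minimum with every nonzero group active, Lemma~\ref{lem-LB}(ii) gives an $N$ beyond which $x^k$ and $x^*$ have the same nonzero-group index set, and by the structure of $y^*$ all those groups are componentwise nonzero. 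Hence for $k\ge N$ the iteration collapses to the smooth restricted problem \eqref{eq-SOG-3} in the active variables $y^k$, with $z^k=0$, so that the full-dimensional quantities $F(x^k)-F(x^*)$ and $\|x^k-x^*\|_2$ equal $(f+\varphi)(y^k)-(f+\varphi)(y^*)$ and $\|y^k-y^*\|_2$.

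Next I would derive a sufficient-decrease inequality and an error-bound inequality in terms of the quantity $D_k=\varphi(y^k)-\varphi(y^{k+1})+\langle f'(y^k),y^k-y^{k+1}\rangle$. From \eqref{eq-lem-LC-3}, choosing $v$ small enough that $1-\tfrac{Lv}{2(1-v\alpha)}=:c_1>0$ (which holds since $v<\tfrac12\|A\|_2^{-2}$ and $\alpha\le\|A\|_2^2$), we get $(f+\varphi)(y^{k+1})\le (f+\varphi)(y^k)-c_1 D_k$, and combined with \eqref{eq-lem-LC-2} this gives $(f+\varphi)(y^k)-(f+\varphi)(y^{k+1})\ge c_2\|y^k-y^{k+1}\|_2^2$ for $c_2=c_1(\tfrac1v-\alpha)>0$, as well as $\sum_k\|y^k-y^{k+1}\|_2^2<\infty$. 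For the error bound, set $w=y^*$ in \eqref{eq-lem-LC-1}: this yields
\[
\varphi(y^*)-\varphi(y^{k+1})+\langle f'(y^k),y^*-y^{k+1}\rangle \ge \tfrac1v\langle y^k-y^{k+1},y^*-y^{k+1}\rangle-\alpha\|y^*-y^{k+1}\|_2^2,
\]
and rewriting $\langle f'(y^k),y^*-y^{k+1}\rangle=\langle f'(y^{k+1}),y^*-y^{k+1}\rangle+\langle f'(y^k)-f'(y^{k+1}),y^*-y^{k+1}\rangle$, using that $f$ is quadratic so $f(y^*)\ge f(y^{k+1})+\langle f'(y^{k+1}),y^*-y^{k+1}\rangle$ (indeed with equality up to the nonnegative $\|B(y^*-y^{k+1})\|_2^2$ term), and the Lipschitz bound on $f'$, one obtains an inequality of the form
\[
(f+\varphi)(y^{k+1})-(f+\varphi)(y^*) \le c_3\|y^k-y^{k+1}\|_2^2 + c_4\|y^k-y^{k+1}\|_2\,\|y^{k+1}-y^*\|_2 + \alpha\|y^{k+1}-y^*\|_2^2
\]
for constants $c_3,c_4>0$. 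Now invoke the second-order growth condition \eqref{eq-SOG-4}: $(f+\varphi)(y^{k+1})-(f+\varphi)(y^*)\ge\varepsilon\|y^{k+1}-y^*\|_2^2$ for $k$ large, so $\|y^{k+1}-y^*\|_2^2$ on the right can be absorbed once the generated points are close enough to $y^*$ — but this needs $\varepsilon>\alpha$, which is \emph{not} guaranteed, so instead I would feed \eqref{eq-SOG-4} back differently: it gives $\|y^{k+1}-y^*\|_2\le\varepsilon^{-1/2}\big((f+\varphi)(y^{k+1})-(f+\varphi)(y^*)\big)^{1/2}$, and substituting this into the right-hand side above turns the displayed inequality into a quadratic inequality in $\Delta_{k+1}^{1/2}$ where $\Delta_k:=(f+\varphi)(y^k)-(f+\varphi)(y^*)$, yielding $\Delta_{k+1}\le c_5\|y^k-y^{k+1}\|_2^2$ for some $c_5>0$ after solving it. Chaining this with the sufficient decrease $\|y^k-y^{k+1}\|_2^2\le c_2^{-1}(\Delta_k-\Delta_{k+1})$ gives $\Delta_{k+1}\le \tfrac{c_5}{c_2}(\Delta_k-\Delta_{k+1})$, hence $\Delta_{k+1}\le\eta\Delta_k$ with $\eta=\tfrac{c_5}{c_2+c_5}\in(0,1)$, which is the geometric decay of $F(x^k)-F(x^*)$.

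Finally, linear decay of the iterates follows from the decay of $\Delta_k$: from sufficient decrease $\|y^k-y^{k+1}\|_2\le (c_2^{-1}\Delta_k)^{1/2}\le (c_2^{-1})^{1/2}\Delta_0^{1/2}(\sqrt\eta)^{k}$, so $\{y^k\}$ is Cauchy with geometric tails and $\|y^k-y^*\|_2\le\sum_{j\ge k}\|y^j-y^{j+1}\|_2\le C'(\sqrt\eta)^k$; transporting back through the frozen-support identification gives both bounds in \eqref{eq-thm-LC} with $\eta$ replaced by $\sqrt\eta$ (rename), completing the proof. \textbf{The main obstacle} I anticipate is the error-bound step: one must extract $\Delta_{k+1}\lesssim\|y^k-y^{k+1}\|_2^2$ cleanly from \eqref{eq-lem-LC-1} without any spurious assumption like $\varepsilon>\alpha$, and the cross term $\|y^k-y^{k+1}\|_2\|y^{k+1}-y^*\|_2$ has to be handled by the right Young's-inequality split so that the residual $\|y^{k+1}-y^*\|_2^2$ coefficient stays strictly below $\varepsilon$ — this is exactly where the second-order growth condition (and hence the non-degeneracy \eqref{eq-SOG-2}, ultimately the $q<1$ concavity giving $h^{(4)}(0)\ge0$) is indispensable, and it is the delicate bookkeeping point of the whole argument.
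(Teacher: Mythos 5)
Your overall scaffolding (frozen support via Lemma \ref{lem-LB}(ii), reduction to the smooth restricted problem, sufficient decrease from \eqref{eq-lem-LC-2}--\eqref{eq-lem-LC-3}, chaining a bound of the form $\Delta_{k+1}\lesssim\|y^k-y^{k+1}\|_2^2$ into a geometric ratio, then geometric tails for the iterates) is sound, but the crucial error-bound step has a genuine gap, and it is exactly the one you flagged and then did not actually escape. Putting $w=y^*$ in \eqref{eq-lem-LC-1} leaves the additive term $\alpha\|y^{k+1}-y^*\|_2^2$ with the \emph{fixed} coefficient $\alpha=\|B\|_2^2$ on the right-hand side. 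Your ``different feeding'' of \eqref{eq-SOG-4} replaces it by $\tfrac{\alpha}{\varepsilon}\Delta_{k+1}$, so with $t:=\Delta_{k+1}^{1/2}$ and $d:=\|y^k-y^{k+1}\|_2$ the inequality you must solve is
\begin{equation*}
\Bigl(1-\tfrac{\alpha}{\varepsilon}\Bigr)t^2-c_4\varepsilon^{-1/2}\,d\,t-c_3 d^2\le 0,
\end{equation*}
which yields $t\le c\,d$ only if $1-\alpha/\varepsilon>0$, i.e.\ $\varepsilon>\alpha$ --- precisely the condition you said is not guaranteed (and indeed it is not: $\varepsilon$ is governed by the smallest eigenvalue of $2B^\top B+\varphi''(y^*)$ with $\varphi''(y^*)\prec0$, so typically $\varepsilon\ll\alpha$). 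If $\varepsilon\le\alpha$ the inequality is vacuous, so your claimed conclusion $\Delta_{k+1}\le c_5\|y^k-y^{k+1}\|_2^2$ does not follow. The essential point is that a term in $\|y^{k+1}-y^*\|_2^2$ whose coefficient you cannot tune must never appear; the growth constant $\varepsilon$ can only absorb terms whose coefficients are at your disposal (e.g.\ produced by a Young split).

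This is how the paper's proof avoids the trap: instead of \eqref{eq-lem-LC-1} at $w=y^*$, it exploits the second-order information at the local minimum more fully. Since every nonzero group of $x^*$ is active, $\varphi$ is smooth and, by \eqref{eq-SOG-2}, \emph{concave} near $y^*$ while $f+\varphi$ is locally \emph{convex}; local convexity gives $r_k\le\langle f'(y^k)+\varphi'(y^k),\,y^k-y^*\rangle$, concavity of $\varphi$ and the stationarity relation \eqref{eq-LC-2} then give $r_k\le D_k+\bigl(\tfrac{L_\varphi}{2}+\tfrac1v\bigr)\|y^k-y^{k+1}\|_2\,\|y^{k+1}-y^*\|_2$, in which the only distance-to-$y^*$ term is the cross term. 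Combining with $r_k-r_{k+1}\ge\beta D_k$, $D_k\ge(\tfrac1v-\alpha)\|y^k-y^{k+1}\|_2^2$, the growth bound $\|y^{k+1}-y^*\|_2^2\le c\,r_k$, and a Young split whose free parameter is chosen so the resulting $r_k$-coefficient is $\beta/2<\beta$, one gets $r_{k+1}\le\eta_1 r_k$ with no relation between $\varepsilon$ (or $c$) and $\alpha$ ever needed. To repair your argument you would have to use this local convexity/concavity (equivalently, a local gradient-domination inequality coming from the positive-definite Hessian in \eqref{eq-SOG-2}, together with $\|f'(y^{k+1})+\varphi'(y^{k+1})\|_2\le(L+\tfrac1v)\|y^k-y^{k+1}\|_2$) rather than \eqref{eq-lem-LC-1} with $w=y^*$ plus \eqref{eq-SOG-4} alone.
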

\begin{proof}
The convergence of $\{x^k\}$ to a critical point $x^*$ of \eqref{eq-GSOP} directly follows from Theorem \ref{thm-GP-SO-2}.
Let notations used in Lemma \ref{thm-SOG} be adopted, $D_k$, $N$ and $\delta$ be defined as in Lemma \ref{lem-LC}, and let
\[r_k:=F(x^{k})-F(x^*).\]
%
Note in \eqref{eq-ActiveG} that $y^k\neq_\mathbf{a} 0$ and $z^k=0$ for any $k\ge N$. Thus
\[
r_k=(f+\varphi)(y^k)-(f+\varphi)(y^*)\quad \mbox{for any } k\ge N.
\]
It is trivial to see that $\varphi(\cdot)$ is smooth around $y^*$ (as it is active), and that
\[
\varphi''(y^*)=\lambda q(q-1)\left(\begin{array}{cccc}  M^*_1 &0 & 0\\ 0&\ddots &0 \\ 0&0 & M^*_S\\
\end{array}\right)\prec 0,\quad f''(y^*)+\varphi''(y^*)\succ 0
\]
(as shown in \eqref{eq-SOG-2}). This shows that $\varphi(\cdot)$ is concave around $y^*$, while  $(f+\varphi)(\cdot)$ is convex around $y^*$.
Without loss of generality, we assume that $\varphi(\cdot)$ is concave and $(f+\varphi)(\cdot)$ is convex in $B(y^*,\delta)$ and that
$y^k\in B(y^*,\delta)$ for any $k\ge N$ (since $\{y^k\}$ converges to $y^*$).

By the convexity of $(f+\varphi)(\cdot)$ in $B(y^*,\delta)$, it follows that for any $k\ge N$
\begin{equation}\label{eq-LC-b1}
\begin{array}{lll}
r_k&=(f+\varphi)(y^{k})-(f+\varphi)(y^*)\\
&\le \langle f'(y^k)+\varphi'(y^k), y^k-y^* \rangle\\
&= \langle f'(y^k)+\varphi'(y^k), y^k-y^{k+1} \rangle + \langle f'(y^k)+\varphi'(y^k), y^{k+1}-y^* \rangle\\
&= D_k - \varphi(y^k)+\varphi(y^{k+1})+ \langle \varphi'(y^k), y^k-y^{k+1}\rangle+ \langle f'(y^k)+\varphi'(y^k), y^{k+1}-y^* \rangle.
\end{array}
\end{equation}
Noting that $\varphi(\cdot)$ is concave in $B(y^*,\delta)$, it follows that
\[
\varphi(y^k)-\varphi(y^{k+1})\ge \langle \varphi'(y^k), y^k-y^{k+1}\rangle.
\]
Consequently, \eqref{eq-LC-b1} is reduced to
\begin{equation}\label{eq-LC-b2}
\begin{array}{lll}
r_k&\le D_k + \langle f'(y^k)+\varphi'(y^k), y^{k+1}-y^* \rangle\\
&= D_k + \langle \varphi'(y^k)-\varphi'(y^{k+1}), y^{k+1}-y^* \rangle + \langle f'(y^k)+\varphi'(y^{k+1}), y^{k+1}-y^* \rangle\\
&\le D_k + \left( \frac{L_{\varphi}}{2} + \frac1v\right)\|y^k-y^{k+1}\|_2 \|y^{k+1}-y^*\|_2,
\end{array}
\end{equation}
where the last inequality follows from the smoothness of $\varphi$ on $B(y^*,\delta)$ and \eqref{eq-LC-2}, and $L_{\varphi}$ is the Lipschitz constant of $\varphi'(\cdot)$ on $B(y^*,\delta)$. Let $\beta:=1-\frac{Lv}{2(1-v\alpha)}\in (0,1)$ (due to the assumption $v<\frac{1}{L}$). Then \eqref{eq-lem-LC-3} is reduced to
\[
r_k-r_{k+1}=(f+\varphi)(y^k)-(f+\varphi)(y^{k+1})\ge \beta D_k>0,
\]
and thus, it follows from \eqref{eq-LC-b2} and \eqref{eq-lem-LC-2} that
\begin{equation}\label{eq-LC-b3}
\begin{array}{lll}
\beta r_k &\le \beta D_k + \beta \left( \frac{L_{\varphi}}{2} + \frac1v\right)\|y^k-y^{k+1}\|_2 \|y^{k+1}-y^*\|_2\\
&\le r_k-r_{k+1} + \beta \left( \frac{L_{\varphi}}{2} + \frac1v\right) \|y^{k+1}-y^*\|_2 \sqrt{\frac{v}{1-v\alpha} D_k}\\
&\le r_k-r_{k+1} +  \left( \frac{L_{\varphi}}{2} + \frac1v\right) \sqrt{\frac{v\beta}{1-v\alpha}} \|y^{k+1}-y^*\|_2 \sqrt{r_k-r_{k+1}}.
\end{array}
\end{equation}
Recall from Lemma \ref{thm-SOG}(ii), there exists $c>0$ such that
\begin{equation*}
\|y-y^*\|_2^2\le c \left((f+\varphi)(y)- (f+\varphi)(y^*)\right) \quad\forall y\in B(y^*,\delta).
\end{equation*}
Thus, it follows that
\begin{equation}\label{eq-LC-b5}
\|y^{k+1}-y^*\|_2^2\le c r_{k+1} \le c r_k \quad \mbox{for each } k\ge N.
\end{equation}
Let $\epsilon:=\frac{c}{\beta}\left( \frac{L_{\varphi}}{2} + \frac1v\right)^2$.
By Young's inequality, \eqref{eq-LC-b3} yields that
\begin{equation}\label{eq-LC-b4}
\begin{array}{lll}
\beta r_k &\le r_k-r_{k+1} + \frac{1}{2\epsilon} \|y^{k+1}-y^*\|_2^2 \left( \frac{L_{\varphi}}{2} + \frac1v\right)^2 + \frac{\epsilon v\beta}{2(1-v\alpha)}(r_k-r_{k+1})\\
&\le r_k-r_{k+1} + \frac{\beta}{2} r_k + \frac{cv}{2(1-v\alpha)} \left( \frac{L_{\varphi}}{2} + \frac1v\right)^2(r_k-r_{k+1}).
\end{array}
\end{equation}
Let $\gamma:=\frac{cv}{2(1-v\alpha)} \left( \frac{L_{\varphi}}{2} + \frac1v\right)^2 > 0$. Then \eqref{eq-LC-b4} is reduced to
\[
r_{k+1}\le \frac{1+\gamma-\frac{\beta}{2}}{1+\gamma} r_k=\eta_1 r_{k},
\]
where $\eta_1:=\frac{1+\gamma-\frac{\beta}{2}}{1+\gamma} \in (0,1)$.
Thus, by letting $C_1:=r_N\eta_1^{-N}$, it follows that
\[
r_k\le \eta_1^{k-N} r_N = C_1\eta_1^k \quad  \mbox{for any } k\ge N.
\]
By letting $\eta_2=\sqrt{\eta_1}$ and $C_2=\sqrt{c C_1}$, it follows from \eqref{eq-LC-b5} that
\[
\|x^k-x^*\|_2=\|y^k-y^*\|_2\le (cr_k)^{1/2}\le  C_2\eta_2^k,\quad \mbox{for any } k\ge N.
\]
By letting $C:=\max\{C_1,C_2\}$ and $\eta:=\max\{\eta_1,\eta_2\}$, we arrive at \eqref{eq-thm-LC}.
The proof is complete.
\end{proof}

Theorem \ref{thm-LC} is an important theoretical result in that it establishes the linear convergence rate of proximal gradient method for solving the $\ell_{1,q}$ regularization problem under the assumption that any nonzero group of local minimum is an active group. Note that this assumption is satisfied automatically for the sparse optimization problem ($n_{max}=1$). Hence, when $n_{max}=1$, we obtain the linear convergence rate of proximal gradient method for solving $\ell_q$ regularization problem ($0<q<1$), which includes the iterative half thresholding algorithm ($q=1/2$) proposed in \cite{XuZB12} as a special case. This result is stated below, which we believe to the best of our knowledge that it is new.

\begin{corollary}
Let $0<q<1$, and let $\{x^k\}$ be a sequence generated by the proximal gradient method for solving the following $\ell_q$ regularization problem
\begin{equation}\label{eq-Lq}
\min_{x\in \R^n} F(x):=\|Ax-b\|_2^2+\lambda \|x\|_{q}^q
\end{equation}
with $v< \frac{1}{2}\|A\|_2^{-2}$. Then $\{x^k\}$ converges to a critical point $x^*$ of \eqref{eq-Lq}.
Further assume that $x^*$ is a local  minimum of \eqref{eq-Lq}.
Then there exist $N\in \mathbb{N}$, $C>0$ and $\eta\in (0,1)$ such that
\begin{equation}\label{eq-thm-LC}
F(x^{k})-F(x^*)\le C\eta^k\quad {\rm and} \quad \|x^k-x^*\|_2\le C\eta^k,\quad \mbox{for any } k\ge N.
\end{equation}
\end{corollary}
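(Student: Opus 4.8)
The plan is to deduce this corollary directly from Theorem~\ref{thm-LC} by recognizing the $\ell_q$ regularization problem \eqref{eq-Lq} as the special instance of the $\ell_{1,q}$ regularization problem \eqref{eq-GSOP} in which the group structure degenerates to the individual-feature level, i.e.\ $n_{\max}=1$, so that $r=n$ and $\mathcal{G}_i=\{i\}$ for each $i$. Under this degeneration one has $\|x_{\mathcal{G}_i}\|_1=|x_i|$, hence $\|x\|_{1,q}^q=\sum_{i=1}^n|x_i|^q=\|x\|_q^q$, and the objective $F$ in \eqref{eq-Lq} coincides with that of \eqref{eq-GSOP} with $p=1$. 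Consequently the proximal gradient method for \eqref{eq-Lq} is precisely PGM-GSO applied to this instance, and the scalar proximal subproblems are exactly the one-dimensional cases ($l=1$) of \eqref{eq-PPA-Lp,q-subproblem}.

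First I would verify the standing hypotheses of Theorem~\ref{thm-LC}: we have $p=1$, $0<q<1$ and $v<\tfrac12\|A\|_2^{-2}$, so Theorem~\ref{thm-GP-SO-2}(iii) already gives that $\{x^k\}$ converges to a critical point $x^*$ of \eqref{eq-Lq}, which settles the first assertion of the corollary. The hypothesis that $x^*$ is a local minimum is assumed in the corollary, so it remains only to check the last hypothesis of Theorem~\ref{thm-LC}, namely that every nonzero group of $x^*$ is active. But with singleton groups $\mathcal{G}_i=\{i\}$, the statement $x^*_{\mathcal{G}_i}\neq 0$ means $x^*_i\neq 0$, while $x^*_{\mathcal{G}_i}\neq_{\mathbf a}0$ means $x^*_j\neq 0$ for all $j\in\mathcal{G}_i=\{i\}$, i.e.\ again $x^*_i\neq 0$; hence the two notions coincide and the hypothesis holds automatically.

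With all hypotheses of Theorem~\ref{thm-LC} in force, I would invoke it verbatim for this instance: there exist $N\in\mathbb{N}$, $C>0$ and $\eta\in(0,1)$ with $F(x^k)-F(x^*)\le C\eta^k$ and $\|x^k-x^*\|_2\le C\eta^k$ for all $k\ge N$, which is the desired estimate. There is essentially no obstacle beyond making the reduction precise; the single point warranting a line of care is the observation that the ``active nonzero group'' condition is vacuous when $n_{\max}=1$, since that is the only structural assumption in Theorem~\ref{thm-LC} that is not automatic in the grouped setting. For completeness one may also remark that taking $q=1/2$ here recovers the linear convergence of the iterative half thresholding algorithm of \cite{XuZB12}, whose update is the scalar proximal map given by Proposition~\ref{thm-Lp,q-formula}(iii) with $l=1$.
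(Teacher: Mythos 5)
Your reduction is exactly the paper's argument: the corollary is obtained by specializing Theorem \ref{thm-LC} to the degenerate grouping $n_{\max}=1$ (singleton groups), where $\|x\|_{1,q}^q=\|x\|_q^q$ and the ``every nonzero group is active'' hypothesis holds automatically, so the theorem applies verbatim. Your proposal is correct and matches the paper's proof, including the observation about the half thresholding algorithm as the $q=1/2$ special case.
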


\section{Numerical experiments}
The purpose of this section is to carry out the numerical experiments of the proposed proximal gradient method for the $\ell_{p,q}$ regularization problem.
We illustrate the performance of the PGM-GSO among different types of $\ell_{p,q}$ regularization, in particular, when $(p,q)=(2,1),(2,0),(2,1/2),(1,1/2),(2,2/3)$ and $(1,2/3)$, and compare them with several state-of-the-art algorithms, for both simulated data and real data in gene transcriptional regulation.
All numerical experiments are implemented in MATLAB R2013b and executed on a
personal desktop (Intel Core Duo E8500, 3.16 GHz, 4.00 GB of RAM).
\subsection{Simulated data}
In the numerical experiments on simulated data, the numerical data are generated as follows.
We first randomly generate an i.i.d. Gaussian ensemble $A\in \R^{m\times n}$ satisfying $A^\top A=I$.
Then we generate a group sparse solution $\bar{x}\in \R^n$ via randomly splitting its components into $r$ groups
and randomly picking $k$ of them as active groups, whose entries are also randomly generated as i.i.d. Gaussian, while the remaining groups are all set as zeros. We generate the data $b$ by the MATLAB script
\[
b = A*\bar{x} + sigma*randn(m,1),
\]
where $sigma$ is the standard deviation of additive Gaussian noise.
The problem size is set to $n = 1024$ and $m = 256$, and we test on the noisy measurement data with $sigma=0.1\%$.
Assuming the group sparsity level $S$ is predefined, the regularization parameter $\lambda$ is iteratively updated by obeying the rule:
we set the iterative threshold to be the $S$-th largest value of $\|z^k_{\mathcal{G}_i}\|_2$ and solve the $\lambda$ by virtue of Theorem \ref{thm-Lp,q-formula}.

For each given sparsity level, which is $k/r$, we randomly generate the data $A$, $\bar{x}$, $b$ (as above) 500 times,
run the algorithm, and average the 500 numerical results to illustrate the performance of the algorithm.
We choose the stepsize $v=1/2$ in all the testing.
The two key criteria to characterize the performance are the relative error $\|x-\bar{x}\|_2/\|\bar{x}\|_2$ and the successful recovery rate, where the recovery is defined as \emph{success} when the relative error between the recovered data and the true data is smaller than $0.5\%$, otherwise, it is regarded as \emph{failure}.

We carry out six experiments with the initial point $x_0=0$ (unless otherwise specified).
In the first experiment, setting $r=128$ (so group size $G=1024/128=8$), we compare the convergence rate results and the successful recovery rates of the PGM-GSO with $(p,q)=(2,1),(2,0),(2,1/2),(1,1/2),(2,2/3)$ and $(1,2/3)$ for different sparsity levels. In Figure \ref{fig-ConvergenceRate}, (a), (b), and (c) illustrate the convergence rate results on sparsity level $1\%$, $5\%$, and $10\%$, respectively, while (d) plots the successful recovery rates on different sparsity levels.
When the solution is of high sparse level, as shown in Figure \ref{fig-ConvergenceRate}(a), all $\ell_{p,q}$ regularization problems perform perfect and achieve a fast convergence rate.
As demonstrated in Figure \ref{fig-ConvergenceRate}(b), when the sparsity level drops to $5\%$, $\ell_{p,1/2}$ and $\ell_{p,2/3}$ ($p=1$ and $2$) perform better and arrive at a more accurate level than $\ell_{2,1}$ and $\ell_{2,0}$.
As illustrated in Figure \ref{fig-ConvergenceRate}(c), when the sparsity level is $10\%$,
$\ell_{p,1/2}$ further outperforms $\ell_{p,2/3}$ ($p=1$ or $2$), and it surprises us that $\ell_{2,q}$  performs better and achieve a more accurate level than $\ell_{1,q}$ ($q=1/2$ or $2/3$).
From Figure \ref{fig-ConvergenceRate}(d), it is illustrated that $\ell_{p,1/2}$ achieves a better successful recovery rate than $\ell_{p,2/3}$ ($p=1$ or $2$), which outperforms $\ell_{2,0}$ and $\ell_{2,1}$. Moreover, we surprisingly see that $\ell_{2,q}$ also outperforms $\ell_{1,q}$ ($q=1/2$ or $2/3$) on the successful recovery rate. In a word, $\ell_{2,1/2}$ performs as the best one of these six regularizations on both accuracy and robustness.
In this experiment, we also note that the running times are at the same level, about 0.9 second per 500 iteration.

\begin{figure}[!htb]
\centering
  \includegraphics[width=16cm]{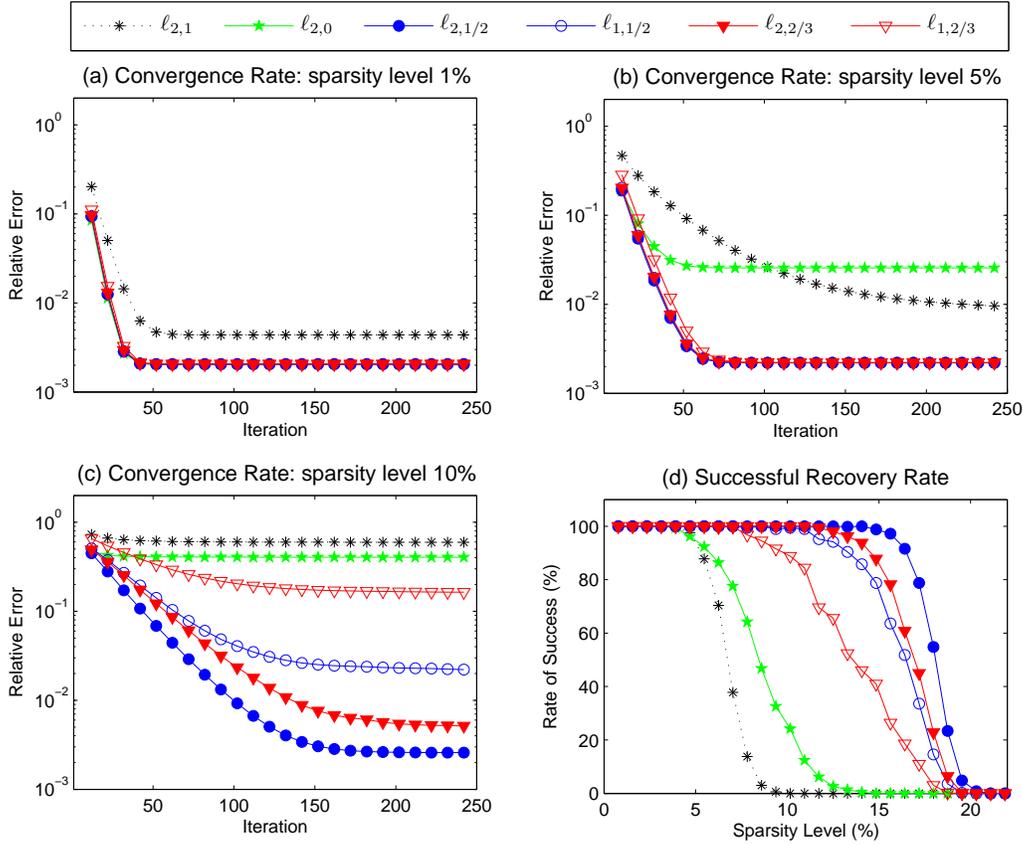}\\
  \caption{Convergence results and recovery rates for different sparsity levels.}
  \label{fig-ConvergenceRate}
\end{figure}

The second experiment is performed to show the sensitivity analysis on the group size ($G=4,8,16,32$) of the PGM-GSO
with the six types of $\ell_{p,q}$ regularization.
As shown in Figure \ref{fig-GroupingSensitivity}, the six types of $\ell_{p,q}$ reach a higher successful recovery rate for the larger group size.
We also note that the larger the group size, the shorter the running time.

\begin{figure}[!htb]
  \includegraphics[width=16cm]{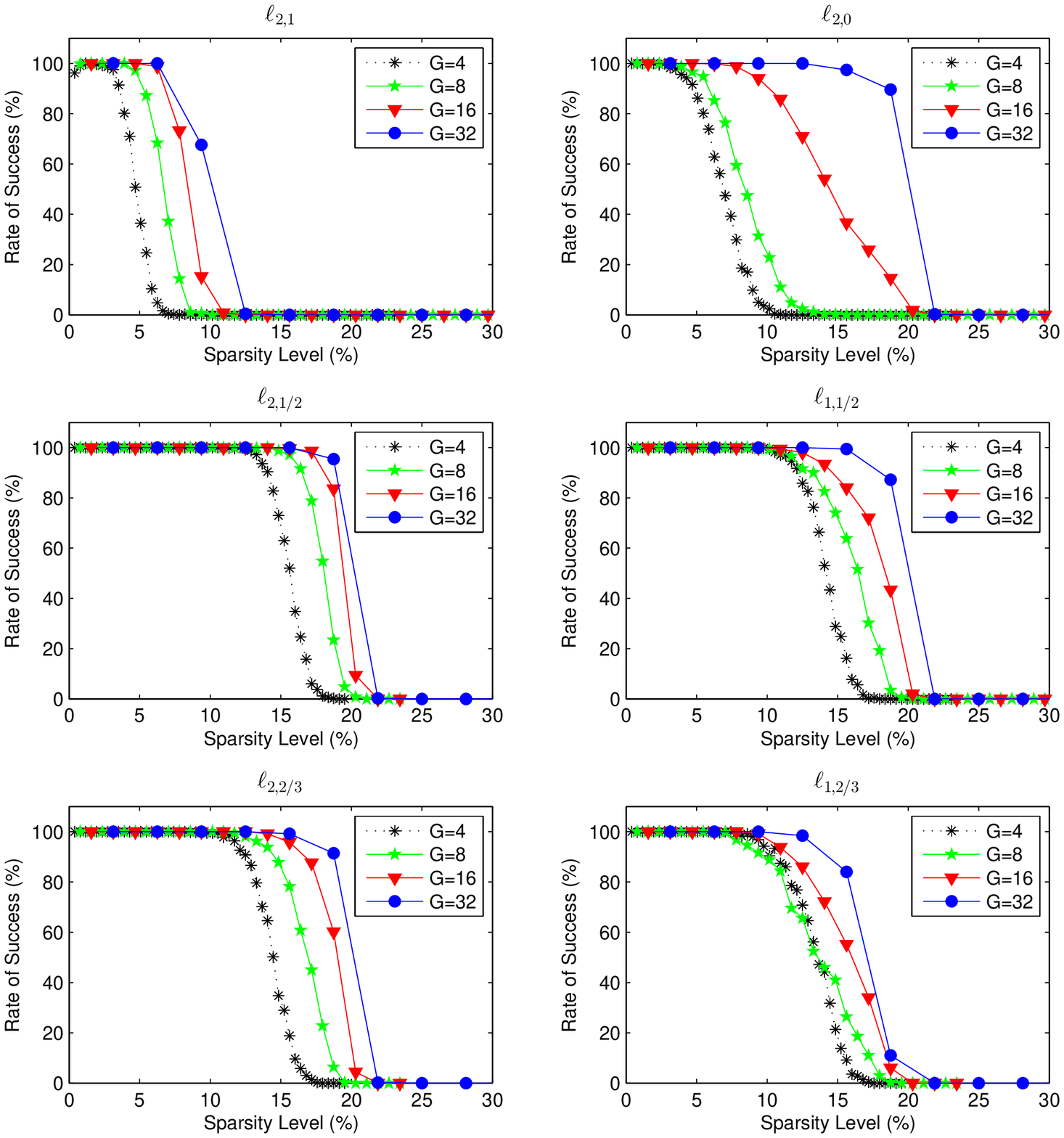}\\
  \caption{Sensitivity analysis on group size.}
  \label{fig-GroupingSensitivity}
\end{figure}

The third experiment is implemented to study the variation of the PGM-GSO when varying the regularization order $q$ (fix $p=2$). Recall from Theorem \ref{thm-Lp,q-formula}, the analytical solution of the proximal optimization subproblems \eqref{eq-PPA-Lp,q-subproblem} can be obtained when $q=0,1/2,2/3,1$. However, in other cases, the analytical solution of \eqref{eq-PPA-Lp,q-subproblem} seems not available, and thus we apply the Newton method to solve the nonlinear equation \eqref{eq-FOC}, which is the optimality condition of the proximal optimization subproblem. Figure \ref{fig-vary-q} shows the variation of successful recovery rates by decreasing the regularization order $q$ from 1 to 0. It is illustrated that the PGM-GSO achieves the best successful recovery rate when $q=1/2$, which arrives at the same
conclusion as the first experiment. The farther the distance of $q$ (in $[0,1]$) from $1/2$, the lower the successful recovery rate.

\begin{figure}[!htb]
  \center
  \includegraphics[width=12cm]{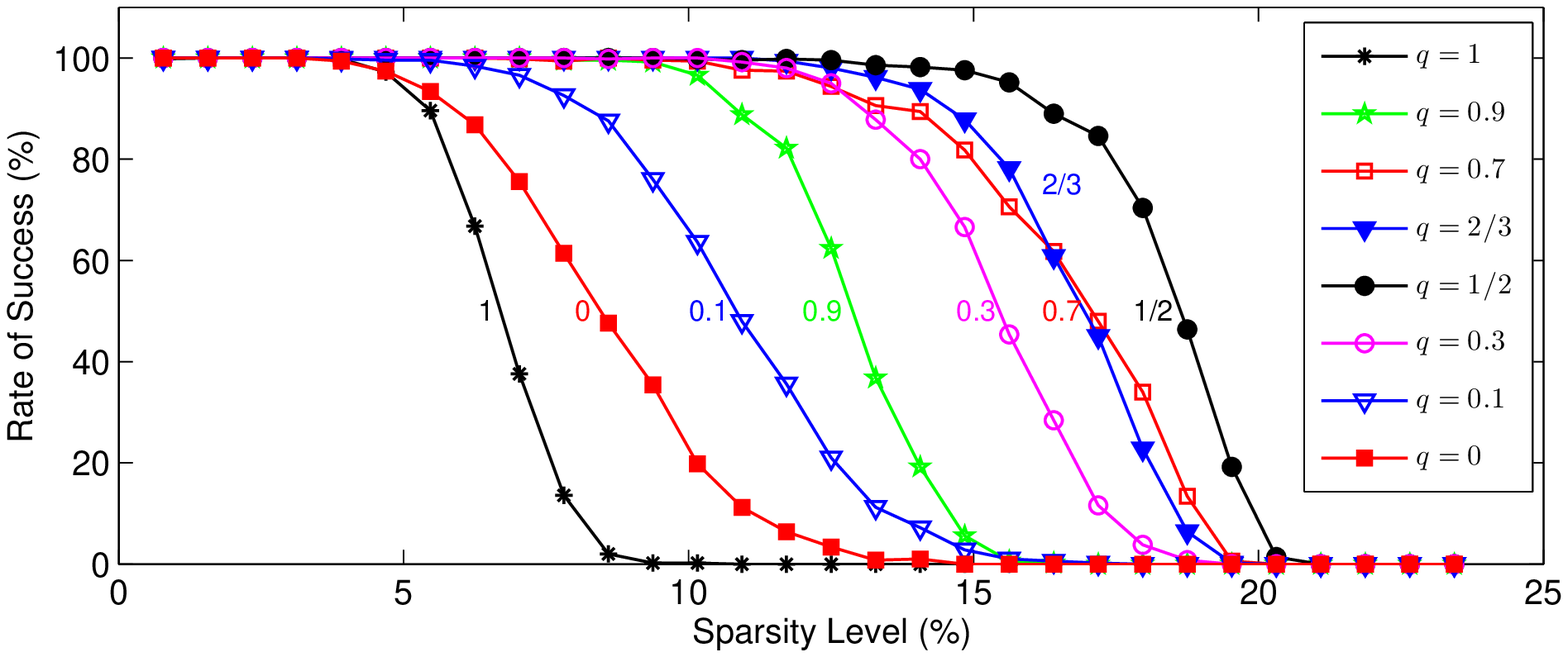}\\
  \caption{Variation of the PGM-GSO when varying the regularization order $q$.}
  \label{fig-vary-q}
\end{figure}

%
%

The fourth experiment is to compare the PGM-GSO with several state-of-the-art algorithms in the field of sparse optimization, either convex or nonconvex algorithms, including $\ell_1$-Magic\footnote{$\ell_1$-Magic is a collection of MATLAB routines for solving the convex optimization programs central to compressive sampling, based on standard interior-point methods. The package is available at http://users.ece.gatech.edu/\~{}justin/l1magic/.} \cite{CandesTao06},
YALL1\footnote{YALL1 (Your ALgorithm for L1) is a package of MATLAB solvers for the $\ell_1$ sparse reconstruction, by virtue of the alternating direction method. The package is available at http://yall1.blogs.rice.edu/.} \cite{YZhang11,YZ11},
GBM\footnote{GBM is a Gradient Based Method for solving the $\ell_{1/2}$ regularization problem. This algorithm is sensitive to the initial guess.
Suggested by the authors, we choose the initial point as the solution obtained by the $\ell_1$-Magic.} \cite{DHLiPJO14},
LqRecovery\footnote{LqRecovery is an iterative algorithm for the $\ell_p$ norm minimization. The code is available at http://www.math.drexel.edu/\~{}foucart/software.htm.}  \cite{FoucartLai09},
HardTA\footnote{HardTA is the iterative Hard Thresholding Algorithm, which is to solve the $\ell_{0}$ regularization problem.} \cite{Blumensath08,BrediesLorenz08}
and HalfTA\footnote{HalfTA is the iterative Half Thresholding Algorithm, which is to solve the $\ell_{1/2}$ regularization problem.} \cite{XuZB12}.
Figure \ref{fig-SAs} demonstrates the successful recovery rates of these algorithms on different sparsity levels.
It is indicated by Figure \ref{fig-SAs} that $\ell_{2,1/2}$ and $\ell_{2,2/3}$ can achieve the higher successful recovery rate
than other algorithms, by exploiting the group sparsity structure.

\begin{figure}[!htb]
  \center
  \includegraphics[width=12cm]{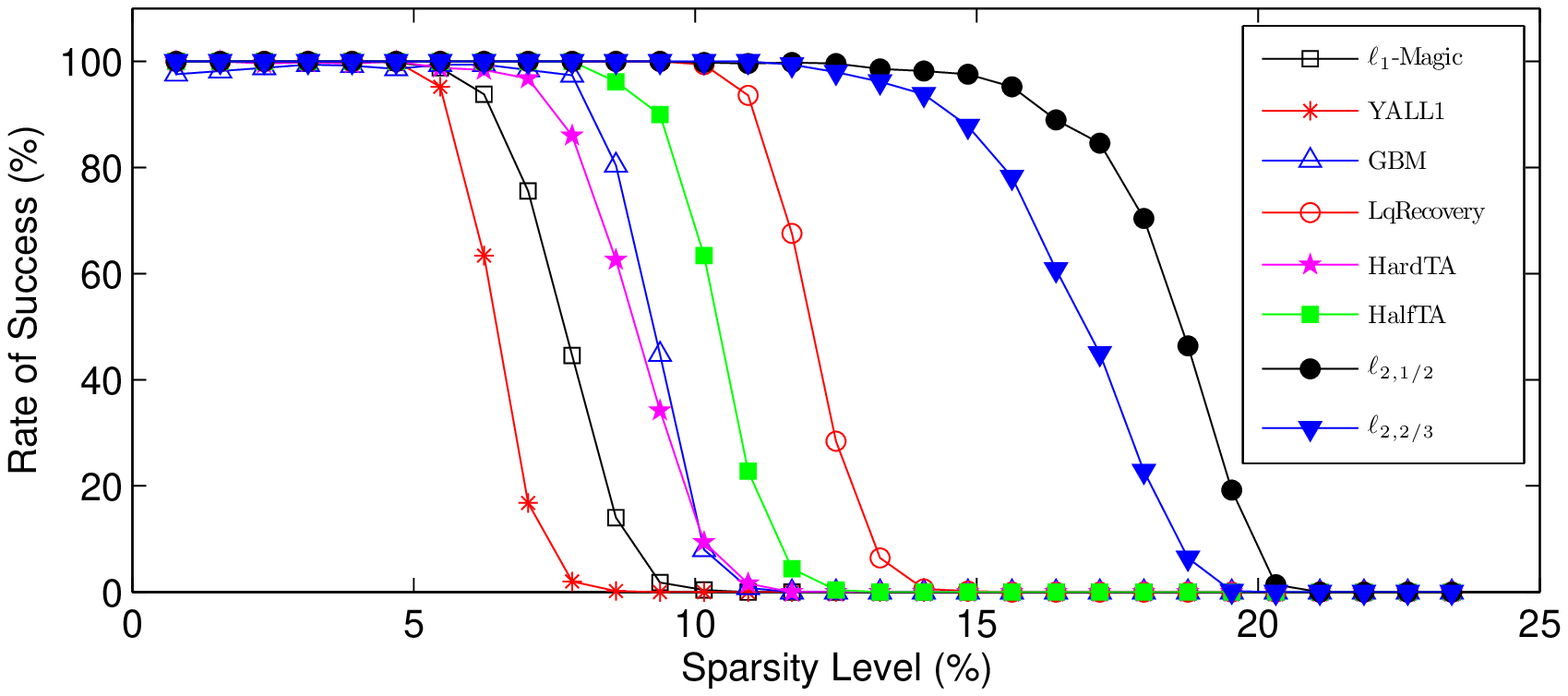}\\
  \caption{Comparison between the PGM-GSO and several state-of-the-art algorithms.}
  \label{fig-SAs}
\end{figure}

Even though some global optimization method, such as the filled function method \cite{Ge90},
can find the global solution of the lower-order regularization problem as in Example \ref{ex-REC-NE},
however, it does not work for the large-scale sparse optimization problems.
Because, in the filled function method, all the directions need to be searched or compared in each iteration, which costs a large amount of time and hampers the efficiency for solving the large-scale problems.

\subsection{Real data in gene transcriptional regulation}
Gene transcriptional regulation is the process that a combination of transcription factors (TFs) act in concert to control the transcription of the target genes. Inferring gene regulatory network from high-throughput genome-wide data is still a major challenge in systems biology, especially when the number of genes is large but the number of experimental samples is small. In large genomes, such as human and mouse, the complexity of gene regulatory system dramatically increases. Thousands of TFs combine in different ways to regulate tens of thousands target genes in various tissues or biological processes. However, only a few TFs collaborate and usually form complexes (i.e., groups of cooperative TFs) to control the expression of a specific gene in a specific cell type or developmental stage. Thus, the prevalence of TF complex makes the solution of gene regulatory network have a group structure, and the gene regulatory network inference in such large genomes becomes a group sparse optimization problem, which is to search a small number of TF complexes (or TFs) from a pool of thousands of TF complexes (or TFs) for each target gene based on the dependencies between the expression of TF complexes (or TFs) and the targets. Even though TFs often work in the form of complexes \cite{XieDan13Cell}, and TF complexes are very important in the control of cell identity and diseases \cite{Hnisz13Cell}, current methods to infer gene regulatory network usually consider each TF separately. To take the grouping information of TF complexes into consideration, we can apply the group sparse optimization to gene regulatory network inference with the prior knowledge of TF complexes as the pre-defined grouping.

\subsubsection{Materials}
Chromatin immunoprecipitation (ChIP) coupled with next generation sequencing (ChIP-seq) identifies \emph{in vivo} active and cell-specific binding sites of a TF. They are commonly used to infer TF complexes recently. Thus, we manually collect ChIP-seq data in mouse embryonic stem cells (mESCs) (Table 1). Transcriptome is the gene expression profile of the whole genome that is measured by microarray or RNA-seq. The transcriptome data in mESCs for gene regulatory network inference are downloaded from Gene Expression Omnibus (GEO). 245 experiments under perturbations in mESC are collected from three papers \cite{Correa-Cerro11,Nishiyama13,Nishiyama09}. Each experiment produced transcriptome data with or without overexpression or knockdown of a gene, in which the control and treatment have two replicates respectively. Gene expression fold changes between control samples and treatment samples of 12488 target genes in all experiments are $\log2$ transformed and form matrix $B\in \R^{245\times 12488}$ (Figure \ref{fig-Workflow}A). The known TFs are collected from four TF databases, TRANSFAC, JASPAR, UniPROBE and TFCat, as well as literature.
Let matrix $H\in \R^{245\times 939}$ be made up of the expression profiles of 939 known TFs, and matrix $Z\in \R^{939\times 12488}$ describe the connections between these TFs and targets. Then, the regulatory relationship between TFs and targets can be represented approximately by a linear system
\[
HZ=B+\epsilon.
\]
The TF-target connections defined by ChIP-seq data are converted into an initial matrix $Z^0$ (cf. \cite{QinHu2014}). Indeed, if TF $i$ has a binding site around the gene $j$ promoter within a defined distance (10 kbp), a non-zero number is assigned on $Z_{ij}^0$ as a prior value.


Now we add the grouping information (TF complexes) into this linear system. The TF complexes are inferred from ChIP-seq data (Table 1) via the method described in \cite{Gianno13GenRes}. Let the group structure of $Z$ be a matrix $W\in \R^{2257\times 939}$ (actually, the number of groups is 1439), whose Moore-Penrose pseudoinverse \cite{Horn85} is denoted by $W^+$. We further let $A:=HW^+$ and $X:=WZ$. Then the linear system can be converted into
\[
AX=B+\epsilon,
\]
where $A$ denotes expression profiles of TF complexes, and $X$ represents connections between TF complexes and targets (Figure \ref{fig-Workflow}A).

\begin{table}[!htb]
  \centering
  \caption{ChIP-seq data for TF complex inference.}
  {\scriptsize
\begin{tabular}{|ccc|ccc|}
\hline
\textbf{Factor} & \textbf{GEO accession} & \textbf{Pubmed ID} & \textbf{Factor} & \textbf{GEO accession} & \textbf{Pubmed ID} \\ \hline
Atf7ip	&GSE26680	&-	&Rad21	&GSE24029	&21589869\\
Atrx	&GSE22162	&21029860	&Rbbp5	&GSE22934	&21477851\\
Cdx2	&GSE16375	&19796622	&Rcor1	&GSE27844	&22297846\\
Chd4	&GSE27844	&22297846	&Rest	&GSE26680	&-\\
Ctcf	&GSE11431	&18555785	&Rest	&GSE27844	&22297846\\
Ctcf	&GSE28247	&21685913	&Rnf2	&GSE13084	&18974828\\
Ctr9	&GSE20530	&20434984	&Rnf2	&GSE26680	&-\\
Dpy30	&GSE26136	&21335234	&Rnf2	&GSE34518	&22305566\\
E2f1	&GSE11431	&18555785	&Setdb1	&GSE17642	&19884257\\
Ep300	&GSE11431	&18555785	&Smad1	&GSE11431	&18555785\\
Ep300	&GSE28247	&21685913	&Smad2	&GSE23581	&21731500\\
Esrrb	&GSE11431	&18555785	&Smarca4	&GSE14344	&19279218\\
Ezh2	&GSE13084	&18974828	&Smc1a	&GSE22562	&20720539\\
Ezh2	&GSE18776	&20064375	&Smc3	&GSE22562	&20720539\\
Jarid2	&GSE18776	&20064375	&Sox2	&GSE11431	&18555785\\
Jarid2	&GSE19365	&20075857	&Stat3	&GSE11431	&18555785\\
Kdm1a	&GSE27844	&22297846	&Supt5h	&GSE20530	&20434984\\
Kdm5a	&GSE18776	&20064375	&Suz12	&GSE11431	&18555785\\
Klf4	&GSE11431	&18555785	&Suz12	&GSE13084	&18974828\\
Lmnb1	&GSE28247	&21685913	&Suz12	&GSE18776	&20064375\\
Med1	&GSE22562	&20720539	&Suz12	&GSE19365	&20075857\\
Med12	&GSE22562	&20720539	&Taf1	&GSE30959	&21884934\\
Myc		&GSE11431	&18555785	&Taf3	&GSE30959	&21884934\\
Mycn	&GSE11431	&18555785	&Tbp	&GSE30959	&21884934\\
Nanog	&GSE11431	&18555785	&Tbx3	&GSE19219	&20139965\\
Nipbl	&GSE22562	&20720539	&Tcfcp2l1	&GSE11431	&18555785\\
Nr5a2	&GSE19019	&20096661	&Tet1	&GSE26832	&21451524\\
Pou5f1	&GSE11431	&18555785	&Wdr5	&GSE22934	&21477851\\
Pou5f1	&GSE22934	&21477851	&Whsc2	&GSE20530	&20434984\\
Prdm14	&GSE25409	&21183938	&Zfx	&GSE11431	&18555785\\ \hline
\end{tabular} }
\end{table}

A literature-based golden standard (low-throughput golden standard) TF-target pair set from biological studies (Figure \ref{fig-Workflow}C),
including 97 TF-target interactions between 23 TFs and 48 target genes, is downloaded from iScMiD (Integrated Stem Cell Molecular Interactions Database). Each TF-target pair in this golden standard dataset has been verified by biological experiments.
Another more comprehensive golden standard mESC network is constructed from high-throughput data (high-throughput golden standard) by ChIP-Array \cite{Qin11NAR} using the methods described in \cite{QinHu2014}. It contains 40006 TF-target pairs between 13092 TFs or targets (Figure \ref{fig-Workflow}C). Basically, each TF-target pair in the network is evidenced by a cell-type specific binding site of the TF on the target's promoter and the expression change of the target in the perturbation experiment of the TF, which is generally accepted as a true TF-target regulation. These two independent golden standards are both used to validate the accuracy of the inferred gene regulatory networks.

\subsubsection{Numerical results}
%
We apply and compare the PGM-GSO, starting from the initial matrix $X^0:=WZ^0$, to the gene regulatory network inference problem (Figure \ref{fig-Workflow}B). The area under the curve (AUC) of a receiver operating characteristic (ROC) curve is widely recognized as an important index of the overall classification performance of an algorithm; see \cite{Fawcett06ROC}. Here, we apply AUC to evaluate the performance of the PGM-GSO with four types of $\ell_{p,q}$ regularization, $(p,q)=(2,1),(2,0),(2,1/2)$ and $(1,1/2)$. A series of numbers of predictive TF complexes (or TFs), denoted by $k$, from 1 to 100 (that is, the sparsity level varies from about $0.07\%$ to $7\%$) are tested. For each $k$ and each pair of TF complex (or TF) $i$ and target $j$, if the $X^{(k)}_{G_ij}$ is non-zero, this TF complex (or TF) is regarded as a potential regulator of this target in this test. In biological sense, we only concern about whether the true TF is predicted, but not the weight of this TF. We also expect that the TF complexes (or TFs) which are predicted in a higher sparsity level should be more important than those that are only reported in a lower sparsity level. Thus, when calculating the AUC, a score ${\rm Score}_{ij}$ is applied as the predictor for TF $i$ on target $j$:
\begin{equation*}
 {\rm Score}_{ij}:=\left\{\begin{matrix}
   \max_k\{1/k\},&{X_{i j}^{(k)}\neq 0,}\\
   0, &{\rm otherwise.}
\end{matrix}\right.
\end{equation*}

\begin{figure}[!htb]
\centering
\mbox{ \subfigure[Evaluation with high-throughput golden standard.]{\includegraphics[width=7.5cm]{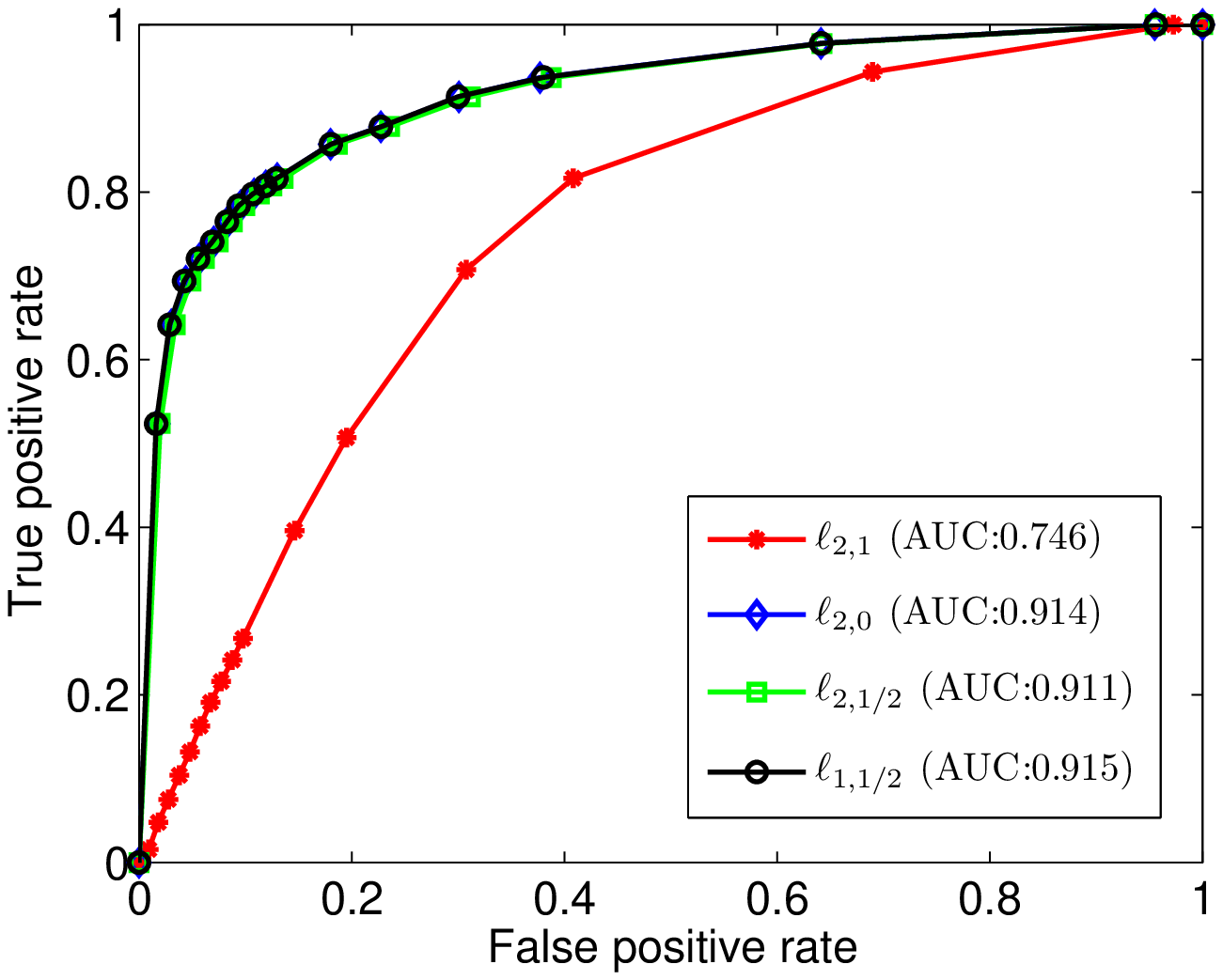}} \quad
\subfigure[Evaluation with literature-based low-throughput golden standard.]{\includegraphics[width=7.5cm]{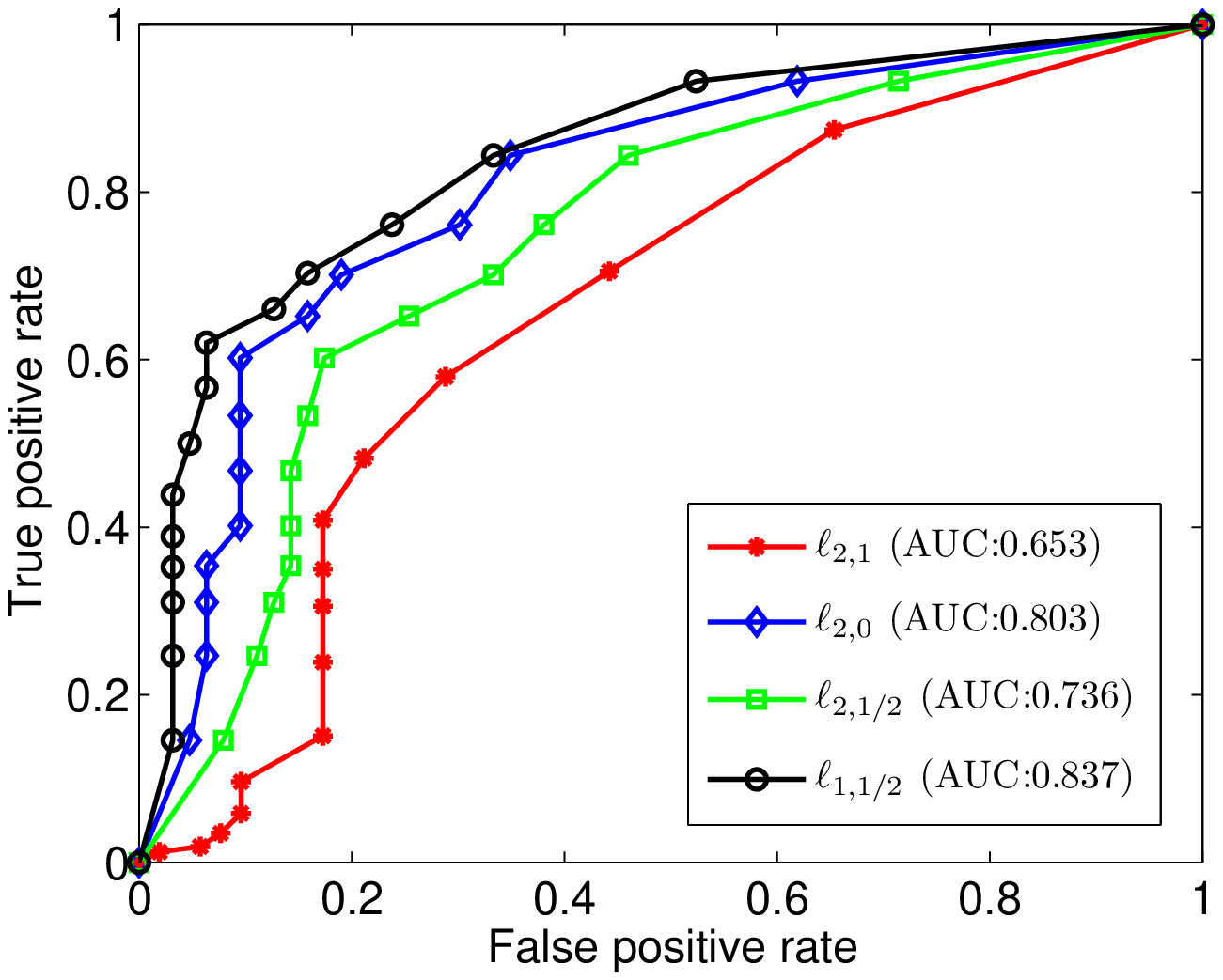}} \quad }
\caption{ROC curves and AUCs of the PGM-GSO on mESC gene regulatory network inference.}
\label{fig-ROC}
\end{figure}

Both high-throughput and low-throughput golden standards are used to draw the ROC curves of the PGM-GSO with four types of $\ell_{p,q}$ regularization in Figure \ref{fig-ROC} to compare their accuracy. When matched with the high-throughput golden standard, it is illustrated from Figure \ref{fig-ROC}(a) that $\ell_{2,1/2}$, $\ell_{1,1/2}$ and $\ell_{2,0}$ perform almost the same (as indicated by the almost same AUC value), and significantly outperform $\ell_{2,1}$.
With the low-throughput golden standard, it is demonstrated from Figure \ref{fig-ROC}(b) that $\ell_{1,1/2}$ is slightly better than $\ell_{2,1/2}$ and $\ell_{2,0}$, and these three regularizations perform much better than $\ell_{2,1}$.
These results are basically consistent with the results from simulated data. Since the golden standards we use here are obtained from real biological experiments, which are well-accepted as true TF-target regulations, the higher AUC, the more biologically accurate the result gene regulatory network is. Thus, our results indicate that the $\ell_{p,1/2}$ and $\ell_{p,0}$ regularizations are applicable to gene regulatory network inference in biological researches that study higher organisms but generate transcriptome data for only a small number of samples, which facilitates biologists to analyze gene regulation in a system level.

\noindent \vskip 0.5cm

\noindent \textbf{Acknowledgment.}
We are grateful to the four anonymous reviewers for their valuable suggestions and remarks which helped to improve the quality of the paper.
We are also thankful to Professor Marc Teboulle for providing the reference \cite{BolteTeboulle13} and the suggestion that the global convergence of the proximal gradient method for the nonconvex and nonsmooth composite optimization problem can be established by using the so-called Kurdyka-{\L}ojasiewicz theory. Indeed, we show in Theorem \ref{thm-GP-SO-2} that the $\ell_{p,0}$ regularization problem globally converges to a local minimum and the $\ell_{p,q}$ $(0<q<1)$ regularization problem globally converges to a critical point by virtue of the Kurdyka-{\L}ojasiewicz theory.



\end{document}